\documentclass[reqno]{amsart}
\usepackage[foot]{amsaddr}
\usepackage[a4paper,hmargin=2cm,vmargin=3cm]{geometry}
\usepackage{enumitem}
\usepackage[T1]{fontenc}
\usepackage[utf8]{inputenc}
\usepackage[english]{babel}
\usepackage{amsmath}
\usepackage{amsthm}
\usepackage{amssymb}
\usepackage{setspace}
\usepackage{bm}
\usepackage{color}

\DeclareMathOperator{\sign}{sign}

\usepackage[pdfusetitle,
 bookmarks=true,bookmarksnumbered=false,bookmarksopen=false,
 breaklinks=false,pdfborder={0 0 1},backref=false,colorlinks=false]
 {hyperref}

\makeatletter
\numberwithin{equation}{section}
\numberwithin{figure}{section}

\theoremstyle{plain}
\newtheorem{thm}{\protect\theoremname}[section]
\newtheorem{lem}[thm]{\protect\lemmaname}
\newtheorem{proposition}[thm]{Proposition}
\newtheorem{cor}[thm]{Corollary}

\newtheorem{assu}[thm]{Assumption}
\theoremstyle{remark}
\newtheorem*{remark}{Remark}

\newcommand{\E}{\mathbb{E}}
\makeatother

\providecommand{\lemmaname}{Lemma}
\providecommand{\theoremname}{Theorem}
\providecommand{\definitionname}{Definition}
\newcommand{\N}{\mathbb{N}}
\newcommand{\R}{\mathbb{R}}
\renewcommand{\(}{\left(}
\renewcommand{\)}{\right)}
\renewcommand{\[}{\left[}
\renewcommand{\]}{\right]}

\newcommand{\Ph}{\mathbf{P}_h}
\newcommand{\Ps}{\mathbf{P}_\sigma}

\newcommand{\Qs}{\mathbf{Q}^{(n\rho)}_{\sigma}}
\newcommand{\D}{\mathbf{D}}
\newcommand{\Gs}{\mathbf{G}_{n}}
\newcommand{\Hn}{\mathbf{H}_n}
\newcommand{\PPP}{\mathbf{PPP}}
\newcommand{\e}{\varepsilon}
\newcommand{\gas}{\Ph-\text{a.s.}}
\newcommand{\Gas}{\Ph-\text{almost surely}}
\newcommand{\geas}{\Ph-\text{e.a.s.}}
\newcommand{\Geas}{\Ph-\text{eventually almost surely}}
\newcommand{\Gae}{\Ph-\text{almost every}}
\newcommand{\res}{\tfrac{1}{n}}
\newcommand{\MGF}{\text{$\log-$MGF}}
\newcommand{\FLT}{\text{FLT}}
\newcommand{\1}[1]{\bm{1}_{\{#1\}}}

\def\been#1{\begin{equation}#1\end{equation}}
\def\aled#1{\begin{aligned}#1\end{aligned}}
\usepackage{lineno}

\title{REM universality for linear random energy}

\author{Francesco Concetti}
\address{Faculty of Mathematics and Computer Science, UniDistance Suisse, 3900 Brig, Switzerland}
\email{francesco.concetti@unidistance.ch}

\author{Simone Franchini}
\address{CNR-ISTC, Via Gian Domenico Romagnosi 18, 00196 Rome, Italy}

\begin{document}

\begin{abstract}
We consider a sequence of random Hamiltonians $H_n(h,\sigma)=\sum^n_{i=1}h_i(\sigma_i-m)$, and study the asymptotic ($n\to \infty$) distribution of the energy levels $\(H_n(h,\sigma)\)_{\sigma\in \{-1,1\}^n}$, where $h_1,h_2,\cdots$ are i.i.d. random variables. We show that, when $e^{O(n)}$ configurations are sampled at random, the corresponding collection of energy levels converges in distribution to a Poisson point process with exponential intensity measure. This establishes the Random Energy Model (REM) universality for the present model. Our results strengthen earlier works on local REM universality by characterizing the distribution of $O(1)-$order fluctuations of $H_n$. In addition, we improve upon the REM universality by dilution studied by Ben Arous, Gayrard, Kuptsov by allowing an exponentially large number $e^{O(n)}$ of sampled configurations, instead of $e^{o(\sqrt{n})}$. Finally, we derive the asymptotic distribution of the Gibbs weight.
{\bf MSC:} 60G55,60F99, 82B44.
\end{abstract}
\maketitle
\section{Introduction}
Let $h := (h_i)_{i\in\N}$ be a sequence of independent and identically distributed (i.i.d.) real-valued random variables, and let
$\sigma := (\sigma_i)_{i\in\N}$ be a sequence of i.i.d. $\{-1,1\}-$valued random variables,
independent of $h$. 
We denote by $\Ps$ the distribution of $\sigma$, characterized by
\been{
\Ps(\sigma_1=1)=\frac{1+m}{2},
}
for some $m\in(-1,1)$. We denote by $\Ph$ the joint distribution of $h$ and denote by $\E\[\,\cdot\,\]$ the expectation over $\Ph$. For any $n\in\mathbb N$, we define
\been{
H_n(h,\sigma) := \sum_{i=1}^n h_i (\sigma_i - m), \qquad \sigma \in \{-1,1\}^{\N}.
}
The Hamiltonian $H_n$ provides a simple example of a random Hamiltonian whose
energy levels $\bigl(H_n(h,\sigma)\bigr)_{\sigma\in\{-1,1\}^{\N}}$ are correlated random variables.
Models of this type arise naturally in the statistical mechanics of disordered systems,
notably in spin glasses, as well as in combinatorial optimization problems.
In particular, $H_n$ is closely related to the number partitioning problem
\cite{Mertens0}.

It has long been conjectured that, for a broad class of random Hamiltonians, the properly rescaled energy levels converge in distribution to a Poisson point process (PPP).~\cite{Mertens0,Mertens00,Mertens1}.
Consequently, the asymptotic statistics of the energy levels coincide with those of
Derrida's Random Energy Model (REM): a spin-glass model in which the energy levels are
independent by construction \cite{Derrida}. This conjecture is commonly referred to as \emph{REM universality}.

In the original REM, the energies are Gaussian random variables. It was later shown that the convergence of rescaled energy levels to a PPP holds for a broader class of models with independent energies drawn from more general distributions~\cite{mez_rem}.

In a series of works, Borgs, Chayes, Mertens, and Nair for the partitioning~\cite{Borgs0,Borgs1}, and Bovier and Kourkova for more general spin glass Hamiltonians~\cite{Bovier0}, proved that the fluctuations of the energy levels converges to a PPP, when observed in a small window of the spectrum whose width
shrinks exponentially fast with the system size $n$. They called this local property of the energy spectrum \emph{local REM universality}.

A complementary perspective was later introduced by Ben~Arous, Gayrard, and
Kuptsov \cite{ArousREM}, who established \emph{REM universality by dilution}. Specifically, they proved that REM universality persists for energy levels arising from random subsets of configurations whose cardinality is sub-exponential ($\sim e^{o(\sqrt{n})}$). 

The present work substantially extends these results. For the Hamiltonian $H_n$, we establish REM universality for energy
fluctuations of order $1$ and for families of configurations whose cardinality grows exponentially with the system size ($\sim e^{\alpha n}$).
In particular, this proves REM universality for an extensive portion of the
energy levels.

Recently, REM universality has attracted renewed interest in physics literature, particularly in connection with advances in mean-field spin glass theory, where new methods that exploit REM-like behavior are proposed \cite{Franchini2021,RSBwr,FranchiniSPA2023}. 

The results of this manuscript are based on the following assumption.
\begin{assu}
\label{assum:1}
The distribution of $h_1$ has an absolutely continuous part, there exists $\e>0$ and a constant $p_1>0$ such that  $\Ph(|h_1|<t)\leq p_1t$ for all $t\in [0,\e)$, and there exists an interval $[c,d]$ such that the density of  $h_1$ on $[c,d]$ is bounded from below by a constant $p_2>0$. Moreover, the first, second, and third moments exist with
\been{
\mathbb E[h_1] = \psi_1, \quad
\mathbb E[h_1^2] = \psi_2, \quad
\mathbb E[|h_1|] = \psi_3,\quad \mathbb E[|h_1|^3] = \psi_4.
}
\end{assu}
Given $\lambda>0$, we define the locally finite measure $\D_{\lambda}$ on $\R$ by
\been{
\D_{\lambda}(\mathfrak{U}):=\int_\mathfrak{U} e^{-\lambda x}dx,
}
for any Borel set $\mathfrak{U}$ in the Borel $\sigma-$algebra $\mathcal{B}(\R)$. The first result of the manuscript is the following. Define the functions 
\been{
\label{eq:defGG20}
G(\lambda):=\E\[\log((1+m) e^{\lambda (1-m)h_1}+(1-m)e^{-\lambda (1+m)h_1})\]-\log(2),\qquad G^*(a):=\sup_{\lambda\in \R}\(\lambda a-G(\lambda)\),
}
and denote by $G'$ the first derivative of $G$. We also define the quantities
\been{
\varsigma:=-m\psi_1+\psi_3,\qquad \gamma:=-\E[\log(1+\sign(h_1)m)]+\log(2),\quad \Gamma_{n}(h):=-\sum^n_{i=1}\log\(1+\sign(h_i)m\)+n\log(2)
}
Our first result establishes a universal asymptotic behavior of the distribution of $H_n$, conditionally on $h$.
\begin{thm}
\label{them:0}
Assume that Assumption \ref{assum:1} holds. Given a deterministic number $c\in (0,\gamma)$, there exists a unique $\tilde a\in (0,\varsigma)$ and $\tilde \lambda\in \R_{>0}$ (depending on $c$) such that
\been{
\label{eq:solac0}
G^*(\tilde a)=c,\quad G'(\tilde \lambda)=\tilde a.
}
Moreover, for any random sequence $(C_n)_{n\in \N}$ such that
\been{
\label{eq:CcondREM0}
C_n(h) \in ( 0, \Gamma_n(h)),\quad \geas,\qquad\textup{and}\qquad \lim_{n\to\infty}\tfrac{1}{n}C_n=c,\quad \gas,
}
there exists a $h-$measurable random sequence $(A_n)_{n\in \N}$ such that the sequence of measure kernels $\(\mathbf{K}_n \)_{n\in \N}$, defined by
\been{
\label{eq:Kconve}
\mathbf{K}_n(h,\mathfrak{U}):=e^{C_n(h)}\Ps(\{\sigma:\,H_n(h,\sigma)-A_n(h)\in\mathfrak{U}\}),\quad h\in \R^{\N},\,\mathfrak{U}\in \mathcal{B}(\R),
}
converges vaguely $\Gas$ to the deterministic measure $\D_{\tilde \lambda}$.
\end{thm}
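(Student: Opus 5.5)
We will treat this as a conditional local large deviation (Bahadur–Rao type) statement for the sum of independent, non-identically distributed variables $X_i:=h_i(\sigma_i-m)$ under $\Ps$, with $h$ fixed. Conditionally on $h$, $\sigma\mapsto H_n(h,\sigma)$ is a sum of independent two-point variables. Each has mean zero, and its log-MGF is $g_i(\lambda):=\log\bigl(\tfrac{1+m}{2}e^{\lambda(1-m)h_i}+\tfrac{1-m}{2}e^{-\lambda(1+m)h_i}\bigr)$, so $\tfrac1n\sum_i g_i(\lambda)\to G(\lambda)$ $\gas$ by the strong law of large numbers. The support of $H_n$ has maximum $\sum_i |h_i|(1-\sign(h_i)m)$, whose normalisation tends to $\varsigma$. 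The $\Ps$-probability of the maximising configuration is $e^{-\Gamma_n(h)}$. This explains the constraint $C_n<\Gamma_n$ and $c<\gamma=\lim G^*(a)$ as $a\uparrow\varsigma$.

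\textbf{Deterministic part.} First we establish the existence and uniqueness of $\tilde a,\tilde\lambda$. The function $G$ is smooth and strictly convex on $\R$, because $h_1$ is not a.s.\ zero by Assumption~\ref{assum:1}. It satisfies $G(0)=0$ and $G'(0)=\E[h_1]\cdot 0=0$. Differentiating shows $G'(\lambda)\uparrow \varsigma$ as $\lambda\to\infty$, by dominated convergence using $\psi_3<\infty$. Hence $G'$ is a bijection from $(0,\infty)$ onto $(0,\varsigma)$. The function $G^*(a)=\tilde\lambda a-G(\tilde\lambda)$ with $G'(\tilde\lambda)=a$ is continuous and strictly increasing on $(0,\varsigma)$, runs from $0$, and tends to $\lim_{\lambda\to\infty}(\lambda G'(\lambda)-G(\lambda))=\gamma$. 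This limit is computed by the same dominated convergence. Therefore $G^*(\tilde a)=c$ has a unique solution.

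\textbf{Choice of $A_n$ and tilting.} Define $\lambda_n(h)$ as the random solution of the analogous finite-$n$ equation
\[
\lambda_n\Lambda_n'(\lambda_n)-\Lambda_n(\lambda_n)=C_n,\qquad \Lambda_n:=\sum_i g_i.
\]
This solution exists and is unique precisely because $0<C_n<\Gamma_n$, and $\Gamma_n$ is the limit of the left-hand side as $\lambda\to\infty$. Set
\[
A_n:=\Lambda_n'(\lambda_n)+\lambda_n^{-1}\log\sqrt{2\pi\Lambda_n''(\lambda_n)}.
\]
Using uniform convergence of $\tfrac1n\Lambda_n$ and its derivatives on compacts together with $C_n/n\to c$, we get $\lambda_n\to\tilde\lambda$ $\gas$. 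Then we perform the exponential change of measure $d\mathbf{Q}_n/d\Ps=e^{\lambda_n H_n-\Lambda_n(\lambda_n)}$. For a test function $f\in C_c(\R)$ this gives
\[
\int f\,d\mathbf{K}_n=e^{C_n-\lambda_n\Lambda_n'(\lambda_n)+\Lambda_n(\lambda_n)}\,\E_{\mathbf{Q}_n}\!\bigl[f(H_n-A_n)e^{-\lambda_n(H_n-\Lambda_n'(\lambda_n))}\bigr].
\]
The prefactor equals $1$ by construction. Under $\mathbf{Q}_n$, the centred variable $H_n-\Lambda_n'(\lambda_n)$ has variance $\Lambda_n''(\lambda_n)\asymp n$. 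It therefore suffices to prove a \emph{local} CLT: for $x$ in compacts and intervals of length $O(1)$,
\[
\mathbf{Q}_n\bigl(H_n-\Lambda_n'(\lambda_n)\in[x,x+\delta]\bigr)=\frac{\delta}{\sqrt{2\pi\Lambda_n''}}\bigl(1+o(1)\bigr).
\]
Plugging this in yields the limit $\int f(y)e^{-\tilde\lambda y}\,dy$, which is vague convergence to $\D_{\tilde\lambda}$. The $\log\sqrt{\cdot}$ shift in $A_n$ absorbs the $\sqrt n$ factor.

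\textbf{Main obstacle: the local CLT at scale $O(1)$.} The $X_i$ are lattice-type variables with random, varying spans, so the local CLT must come from the smoothing provided by the randomness of $h$. We would prove it via Fourier inversion with a smoothing kernel of compactly supported transform. The proof splits the characteristic function $\phi_n(t)=\prod_i\phi_i(t)$ into three frequency ranges.

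For $|t|\le \eta$, a Berry–Esseen-type expansion under $\mathbf{Q}_n$ gives the Gaussian term. The required control of third moments comes from $\psi_4$ and the strong law of large numbers.

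For $\eta\le|t|\le T$, we need $|\phi_n(t)|=o(n^{-1/2})$ uniformly. Each factor satisfies $|\phi_i(t)|^2=1-2q_i(1-q_i)\bigl(1-\cos(2th_i)\bigr)$, where $q_i$ is the tilted probability. Since $\lambda_n$ stays bounded, $q_i$ is bounded away from $0$ and $1$ whenever $|h_i|$ is bounded. The density lower bound on $[c,d]$ ensures that a positive fraction of indices $i$ have $h_i\in[c,d]$. Among these, a positive fraction have $\cos(2th_i)\le 1-\kappa$ uniformly in $|t|\in[\eta,T]$. Hence $|\phi_n(t)|\le e^{-\kappa' n}$ $\geas$, made uniform in $t$ by a net argument with Borel–Cantelli. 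The hypothesis $\Ph(|h_1|<t)\le p_1t$ controls indices with tiny $h_i$, which would otherwise contribute slowly decaying factors near small $t$.

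Because the smoothing kernel can be chosen with $T$ fixed for each target precision $\delta$, no large-$t$ range is needed. This middle-range uniform exponential bound, holding almost surely in $h$, is the step we expect to be hardest.
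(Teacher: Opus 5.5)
Your strategy is sound, but the centering has a sign slip. Write $a_n:=\Lambda_n'(\lambda_n)$, $V_n:=\Lambda_n''(\lambda_n)$, $W:=H_n-a_n$ and $A_n=a_n+s_n$. Your change of measure gives $\int f\,d\mathbf{K}_n=\E_{\mathbf{Q}_n}\bigl[f(W-s_n)e^{-\lambda_n W}\bigr]$. The local CLT then turns this into $e^{-\lambda_n s_n}(2\pi V_n)^{-1/2}\bigl(\int f(y)e^{-\lambda_n y}\,dy+o(1)\bigr)$.

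So you need $s_n=-\lambda_n^{-1}\log\sqrt{2\pi V_n}$. With the plus sign you wrote, the expression is of order $V_n^{-1}\to 0$. The corrected sign matches the paper, whose $\tilde A_n$ solves $M_n^*(h,\tilde A_n)=C-\tfrac12\log(2\pi M_n''(h,\tilde\Lambda_n(h)))$ and hence sits below the level where $M_n^*=C$.

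Note also that the local CLT is then needed at distance $|s_n|\asymp\log n$ from the tilted mean, not for $x$ in a fixed compact. This is harmless, because the Fourier argument is uniform in $x$ and $s_n=o(\sqrt{V_n})$.

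With that fix, your argument is a genuinely different route from the paper's; the deterministic part coincides with Lemma~\ref{lem:exists2}. The paper proves no local limit theorem. It imports the Bovier--Mayer conditional strong large deviation estimate for tails, $\Ps(H_n\ge a)\approx e^{-M_n^*(h,a)}/\bigl(\sqrt{2\pi M_n''(h,\Lambda_n(h,a))}\,\Lambda_n(h,a)\bigr)$. Because this prefactor depends on the threshold, the paper has to do the following:
\begin{enumerate}
\item define $\tilde A_n$ implicitly through the system \eqref{eq:Gstareq};
\item find a solution by an intermediate-value argument between $A_n(h,C-\tfrac34\log n)$ and $A_n(h,C+2\log n)$ on the good sets $\mathfrak{L}^{\e}_n$, which is where $\Ph(|h_1|<t)\le p_1t$ enters via Borel--Cantelli;
\item sandwich $M_n^*(h,\tilde A_n+x)$ by convexity and pass to the limit with the SLLN and Lemma~\ref{lem:sollimits};
\item recover vague convergence from the tails $\mathbf{K}_n(h,[x,\infty))$ at rational $x$, removing the truncation $\mathfrak{R}_{n,\lambda^*}(h)$ by sending $\lambda^*\to\infty$.
\end{enumerate}

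Tilting once at $\lambda_n$ buys you several things. It makes $A_n$ explicit and eliminates the fixed-point system and the bracketing. It gives $\int f\,d\mathbf{K}_n$ for test functions directly. It also never requires the tail asymptotics to hold uniformly along random, $n$-dependent thresholds $\tilde A_n+x$, which the paper packs into Lemma~\ref{lem:Bovier}. The price is that you must prove the local CLT for the triangular array yourself, i.e.\ essentially redo the Bovier--Mayer estimate in local form.

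The middle-frequency step is easier than you fear. Once $\lambda_n$ lies in a fixed compact, the density bound on $[c,d]$ and the SLLN on a finite net of $t$-values give, eventually almost surely, at least $\beta n$ indices with $h_i\in[c,d]$ and $1-\cos(2th_i)\ge\kappa$, simultaneously for all $|t|\in[\eta,T]$. All other factors have modulus at most one, so indices with tiny $|h_i|$ do no harm and the small-ball bound is not needed there.
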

\begin{remark}
The main novelty of Theorem \ref{them:0}, with respect to \cite{ArousREM}, is that the centering sequence $(A_n)_{n\in\N}$ is allowed to depend on the environment $h$. This random centering is needed to obtain the convergence of the kernels $\mathbf K_n(h,\cdot)$.
\end{remark}
Given $n\in\mathbb N$ and $\sigma\in\{-1,1\}^{\mathbb N}$, let $\sigma_{[n]}$ denote its projection onto $\{-1,1\}^n$, namely
\been{
\sigma_{[n]} := (\sigma_1,\ldots,\sigma_n).
}
For $\tau\in\{-1,1\}^n$, we define the associated $n$-dimensional cylinder by
\been{
[\tau] := \{\sigma\in\{-1,1\}^{\mathbb N} : \sigma_{[n]}=\tau\}.
}
For fixed $h\in\mathbb R^{\mathbb N}$, the map $\sigma \mapsto H_n(h,\sigma)$ depends only on $\sigma_{[n]}$ and is therefore constant on each $n$-dimensional cylinder.

Fix $\rho\in(0,1)$. For each fixed $n\in\N$, let $\Qs$ denote the finite measure on the
$\sigma$-algebra on $\{-1,1\}^{\N}$ generated by the $n$-dimensional
cylinders, defined by
\begin{equation}
\Qs([\tau])
:= e^{n\rho(\log 2-\log(1+|m|))}\Ps([\tau]),
\qquad \tau\in\{-1,1\}^n.
\end{equation}

Throughout the paper, we use the symbol $\sigma$ to denote an entire infinite configuration in $\{-1,1\}^{\mathbb N}$ and work with measures defined on this space.

Let $\Omega_n \subset \{-1,1\}^{\mathbb N}$ be a set containing exactly one representative from each $n$-dimensional cylinder. Equivalently, $\Omega_n$ consists of configurations for which the coordinates $\sigma_{n+1},\sigma_{n+2},\dots$ are fixed, so that only the first $n$ spins vary. Whenever only the first $n$ coordinates are relevant--such as in the definition of the point process below--summation is taken over configurations in $\Omega_n$.

Let $(U_\sigma)_{\sigma\in\{-1,1\}^{\N}}$ be a family of independent random variables, uniformly distributed on $[0,1]$, and independent of both $h$ and $\sigma$.

We now state the main theorem of the paper, establishing REM universality for $H_n$. We say that a random variable is $h-$measurable if it is measurable with respect to the $\sigma-$algebra generated by $h$,
\begin{thm}[REM universality]
\label{them:1}
Given $\rho\in (0,1)$, and let $\tilde a$ and $\tilde \lambda$ satisfy
\been{
\label{eq:solac1}
G^*(\tilde a)=\rho\(\log(2)-\log(1+|m|)\,\),\qquad G'(\tilde \lambda)=\tilde a.
}
Then there exists a $h-$measurable random sequence $(A_n(h))$, such that the point process $\Hn$, defined by
\been{
\Hn(\mathfrak{U}):=\sum_{\sigma\in \Omega_n}\1{U_{\sigma}<\Qs([\sigma_{[n]}])}\1{H_n(h,\sigma)-A_n(h)\in \mathfrak{U}},\quad \mathfrak{U}\in \mathcal{B}(\R),
}
converges in distribution to a PPP with intensity measure $\D_{\tilde \lambda}$.
\end{thm}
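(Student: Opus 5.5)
Conditionally on $h$, $\Hn$ is a thinned point process. Each configuration $\sigma\in\Omega_n$ is retained independently with probability $\Qs([\sigma_{[n]}])\wedge 1$, and the retained configurations are placed at $H_n(h,\sigma)-A_n(h)$. So, given $h$, $\Hn$ is a sum of independent Bernoulli point masses. I will use the classical Poisson approximation criterion for such sums, via Laplace functionals. For nonnegative continuous compactly supported $f$,
\[
-\log \E\bigl[e^{-\Hn(f)}\,\big|\,h\bigr]=-\sum_{\sigma\in\Omega_n}\log\Bigl(1-p_\sigma\bigl(1-e^{-f(H_n(h,\sigma)-A_n)}\bigr)\Bigr),
\]
where $p_\sigma:=\min(1,\Qs([\sigma_{[n]}]))$. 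Two facts reduce this to Theorem \ref{them:0}:
\[
\max_\sigma p_\sigma\to0,\qquad \sum_\sigma p_\sigma\bigl(1-e^{-f(\cdot)}\bigr)\to \int(1-e^{-f})\,d\D_{\tilde\lambda}.
\]
Once these hold, $\log(1-x)=-x+O(x^2)$ gives conditional convergence of the Laplace functional to that of a PPP with intensity $\D_{\tilde\lambda}$, $\Ph$-a.s. Dominated convergence over $h$ then yields convergence in distribution of $\Hn$.

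\emph{Step 1 (uniform smallness).} We have $\Ps([\tau])=\prod_i \frac{1+\tau_i m}{2}\le \bigl(\frac{1+|m|}{2}\bigr)^n$. Hence $\Qs([\tau])\le e^{-n(1-\rho)(\log 2-\log(1+|m|))}\to0$ uniformly, because $\rho<1$ and $|m|<1$. So $p_\sigma=\Qs([\sigma_{[n]}])$ for $n$ large, and $\sum_\sigma p_\sigma^2 g(\cdot)\le \max p\cdot\sum p_\sigma g$, which makes the quadratic error negligible.

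\emph{Step 2 (identifying the kernel).} By definition, $\sum_{\sigma\in\Omega_n}\Qs([\sigma_{[n]}])\,g(H_n-A_n)=\int g\,d\mathbf{K}_n(h,\cdot)$ with $\mathbf{K}_n$ as in \eqref{eq:Kconve} for the deterministic choice $C_n:=n\rho(\log2-\log(1+|m|))$. Then $c=\rho(\log 2-\log(1+|m|))$, and \eqref{eq:solac1} matches \eqref{eq:solac0}. We must check \eqref{eq:CcondREM0}, namely $0<C_n<\Gamma_n(h)$ eventually almost surely. Since $-\log(1+\sign(h_i)m)\ge-\log(1+|m|)$, we get $\Gamma_n\ge n(\log2-\log(1+|m|))>C_n$ for every $n$ because $\rho<1$. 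This also gives $c<\gamma$. Theorem \ref{them:0} then supplies the $h$-measurable $A_n$ with $\mathbf{K}_n\to\D_{\tilde\lambda}$ vaguely a.s. Applying this to $g=1-e^{-f}$, which is continuous with compact support, gives the second fact.

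\emph{Step 3 (conclusion).} On the almost-sure event where vague convergence holds, the conditional Laplace functionals converge to $\exp(-\int(1-e^{-f})d\D_{\tilde\lambda})$. Averaging over $h$ by bounded convergence gives convergence of the unconditional Laplace functionals. This is equivalent to convergence in distribution for point processes on $\R$, the limit being locally finite since $\D_{\tilde\lambda}$ is.

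The main difficulty lies entirely inside Theorem \ref{them:0}, which we may assume. The only delicate points left here are the uniformity in Step 1 and verifying that the deterministic $C_n$ satisfies \eqref{eq:CcondREM0}, both of which are elementary.
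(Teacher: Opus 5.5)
Your proposal is correct and follows essentially the same route as the paper. The paper likewise writes the conditional Laplace functional as a product over independent Bernoulli thinnings (Lemma \ref{lem:Lap_H}) and uses the uniform bound $\Qs([\tau])\le e^{-n\delta}$ (Lemma \ref{lem:Qbounds}). It then takes $C_n=n\rho(\log 2-\log(1+|m|))$, checks $C_n<\Gamma_n(h)$ exactly as you do, invokes Theorem \ref{them:0}, and concludes by dominated convergence. The only cosmetic difference is that the paper controls the logarithm with the explicit sandwich $\frac{\alpha}{1-\alpha}x\le\log(1+\alpha x)\le\alpha x$ instead of your $\log(1-x)=-x+O(x^2)$ estimate, and the two amount to the same thing.
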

\begin{remark}
The indicators $\1{U_{\sigma}<\Qs([\sigma_{[n]}])}$ implement a \emph{thinning} of the spin configurations, randomly reducing the number of configurations contributing to the point process $\Hn$. Unlike the REM universality by dilution of Ben~Arous, Gayrard, and
Kuptsov \cite{ArousREM}, which retains only $e^{o(\sqrt{n})}$ configurations, this thinning operation preserves, on average, $e^{n\rho(\log(2)-\log(1+|m|))}$ configurations.
\end{remark}
The above theorem has the following immediate corollary. Given a realization of the random sequence $U$, define the set of retained configurations $\mathcal{G}_n(U)\subseteq \Omega_n$ as
\been{
\mathcal{G}_n(U):=\{\sigma\in \Omega_n:\,U_{\sigma}\leq \Qs([\sigma_{[n]}])\}
}
If $\mathcal{G}_n(U)\neq \emptyset$, for $\beta>0$ and $\sigma \in \mathcal{G}_n(U)$, we define the \emph{Gibbs weight} of $\sigma$ as
\begin{equation}
\label{eq:gibbs-weights}
\Gs(\sigma)
:= \frac{e^{\beta H_n(h,\sigma)}}{\sum_{\tau \in\mathcal{G}_n(U)} e^{\beta H_n(h,\tau)}}.
\end{equation}
Hence, by reordering the sequence $(\Gs(\sigma))_{\sigma\in \mathcal{G}_n(U)}$ as a non-increasing sequence $(w_\alpha)_{\alpha\leq |\mathcal{G}_n(U)|}$, and set $w_\alpha=0$ for $\alpha>|\mathcal{G}_n(U)|$ (and $w_\alpha=0$ for any $\alpha$ if $\mathcal{G}_n(U)=\emptyset$)
\begin{cor}[Convergence to Poisson-Dirichlet]
If $\beta>\tilde{\lambda}$, the law of the sequence $(w_{\alpha})_{\alpha\in \N}$ converges to the Poisson-Dirichlet distribution $\textup{PD}(\tilde{\lambda}/\beta,0)$, where $\tilde{\lambda}$ is defined in \eqref{eq:solac1}.
\end{cor}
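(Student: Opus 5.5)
The plan is to deduce the corollary from Theorem \ref{them:1} through the standard continuous-mapping argument for Poisson point processes with exponential intensity, as in the REM literature. Write $E_\sigma := H_n(h,\sigma)-A_n(h)$ for $\sigma\in\mathcal{G}_n(U)$. Then
\[
\Gs(\sigma)=\frac{e^{\beta E_\sigma}}{\sum_{\tau\in\mathcal{G}_n(U)}e^{\beta E_\tau}} .
\]
The centering $A_n$ therefore cancels, and $(w_\alpha)$ is the ordered normalized sequence of $e^{\beta x}$ over the atoms $x$ of $\Hn$. (The boundary case $U_\sigma=\Qs$ has probability zero, so $\mathcal{G}_n(U)$ carries the same atoms as $\Hn$.) By Theorem \ref{them:1}, $\Hn\to\Pi$, a PPP with intensity $e^{-\tilde\lambda x}dx$.

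For this limit we compute what the map gives. The image of $\Pi$ under $x\mapsto e^{\beta x}$ is a PPP on $(0,\infty)$ with intensity proportional to $y^{-\tilde\lambda/\beta-1}dy$. Since $\tilde\lambda/\beta\in(0,1)$, the sum of its points is a.s. finite, and its ordered normalized points are $\textup{PD}(\tilde\lambda/\beta,0)$. This is the classical Pitman--Yor / Talagrand characterization, and we would simply cite it.

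The substantive step is to justify passing to the limit, because the normalization $\sum e^{\beta E_\tau}$ is not a continuous functional in the vague topology. We would proceed in three steps.

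\textbf{(i) Truncated sums converge.} For each fixed $K$, the vector of the largest $k$ atoms of $\Hn$ together with $\sum_{E_\tau>-K}e^{\beta E_\tau}$ converges in law to its $\Pi$-analogue. This follows from vague convergence, because $\Pi$ has a.s. finitely many atoms in $[-K,\infty)$ and no atom at $-K$. One caveat: the intensity is infinite near $+\infty$ only in the sense that its tail $e^{-\tilde\lambda x}$ is integrable, so we also need tightness of the top atom. We get this from $\E[\Hn([L,\infty))]$. That expectation is controlled by the vague convergence of $\mathbf{K}_n$ from Theorem \ref{them:0}, and we would need a uniform tail bound there as well; we would try to obtain it from the exponential Chebyshev estimate underlying Theorem \ref{them:0}.

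\textbf{(ii) Small atoms contribute little.} We need
\[
\lim_{K\to\infty}\limsup_{n}\mathbb{P}\Big(\sum_{E_\tau\le -K}e^{\beta E_\tau}>\delta\Big)=0 .
\]
For this we compute the first moment conditionally on $h$,
\[
\E_U\sum_{\tau}\1{U_\tau<\Qs}\,e^{\beta E_\tau}\1{E_\tau\le -K}
=\int_{(-\infty,-K]}e^{\beta x}\,\mathbf{K}_n(h,dx),
\]
where $\mathbf{K}_n$ is the kernel of Theorem \ref{them:0} with $C_n=n\rho(\log2-\log(1+|m|))$. In the limit this becomes $\int_{-\infty}^{-K}e^{(\beta-\tilde\lambda)x}dx\to0$, which is finite precisely because $\beta>\tilde\lambda$. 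To make this rigorous we need a uniform-in-$n$ bound on the left tail $\mathbf{K}_n(h,[x-1,x])\lesssim e^{-\lambda' x}$ with some $\lambda'<\beta$ for $x\to-\infty$, not just vague convergence. We expect this to be the main obstacle.

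We would attack it with the exponential Chebyshev tilt at a parameter $\lambda'\in(\tilde\lambda,\beta)$:
\[
\mathbf{K}_n(h,(-\infty,x])\le e^{C_n-\lambda' (x+A_n)}\,\E_\sigma\big[e^{\lambda' H_n}\big]
= e^{C_n-\lambda'(x+A_n)+\sum_i\log\E_\sigma e^{\lambda' h_i(\sigma_i-m)}} .
\]
Here $A_n$ is chosen (as in Theorem \ref{them:0}) via the tilted mean at $\tilde\lambda$. The exponent is then $-\lambda' x+n[G(\lambda')-G(\tilde\lambda)-(\lambda'-\tilde\lambda)\tilde a]+o(n)$, and this term is negative of order $n$ by strict convexity. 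That kills the deep tail $x\le -\eta n$. The moderate range $-\eta n<x\le -K$ would instead use local CLT-type estimates from the proof of Theorem \ref{them:0}.

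\textbf{(iii) Conclusion.} With (i) and (ii) in hand, a standard $\varepsilon$-argument on $K$ gives joint convergence of the top-$k$ weights and the total sum. Hence the finite-dimensional laws of $(w_\alpha)$ converge to those of $\textup{PD}(\tilde\lambda/\beta,0)$. The event $\mathcal{G}_n(U)=\emptyset$ has vanishing probability, since $\Hn(\R)\to\infty$ in probability.
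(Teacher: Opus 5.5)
Your opening reduction is exactly the paper's proof. The paper notes that $A_n$ cancels in $\Gs(\sigma)$ and then invokes Theorem~\ref{them:1} together with \cite[Lemma 1.2.3]{Talcavi}, without writing out anything like your steps (i)--(ii). You are right that convergence of $\Hn$ in the vague topology does not by itself control the normalizing sum, so those steps are where the real work lies.

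Step (i) needs no new estimate. The proof of Theorem~\ref{them:0} actually gives $\mathbf{K}_n(h,[x,\infty))\to\tilde\lambda^{-1}e^{-\tilde\lambda x}$ for every rational $x$, $\Ph$-a.s.\ (see \eqref{eq:set1}). This is a tail statement, stronger than vague convergence, and it yields $\mathbb{P}(\Hn([L,\infty))\ge 1)\le\E[1\wedge\mathbf{K}_n(h,[L,\infty))]\to\tilde\lambda^{-1}e^{-\tilde\lambda L}$.

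The genuine gap is in (ii): the deep-tail bound is false as written, for two reasons.
First, for $\lambda'>0$ the quantity $e^{C_n-\lambda'(x+A_n)}\langle e^{\lambda' H_n}\rangle$ bounds the upper tail $\mathbf{K}_n(h,[x,\infty))$, not $\mathbf{K}_n(h,(-\infty,x])$. The latter is at least $e^{C_n}\Ps(H_n\le 0)$ as soon as $x\ge -A_n$, so it is exponentially large.
Second, since $G'(\tilde\lambda)=\tilde a$, strict convexity makes $G(\lambda')-G(\tilde\lambda)-(\lambda'-\tilde\lambda)\tilde a$ strictly positive, not negative.

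The tilt has to be applied to the Boltzmann-weighted mass instead. For $y\le x$ and $\lambda'\le\beta$ one has $e^{\beta y}\le e^{(\beta-\lambda')x}e^{\lambda' y}$, so
\begin{equation*}
\int_{(-\infty,x]}e^{\beta y}\,\mathbf{K}_n(h,dy)\le e^{(\beta-\lambda')x}\exp\bigl(C_n-\lambda' A_n+M_n(h,\lambda')\bigr).
\end{equation*}
Take the paper's centering $A_n=\tilde A_n$ and choose $\lambda'=\tilde\Lambda_n(h)$, which is eventually $<\beta$. Then \eqref{eq:Gstareq} and \eqref{eq:Mstarstat} reduce the exponent to $\tfrac12\log(2\pi M''_n(h,\tilde\Lambda_n))\le\tfrac34\log n$ on $\mathfrak{L}^{\e}_n$. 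Hence the integral is at most $n^{3/4}e^{(\beta-\tilde\Lambda_n)x}$, which disposes of all $x\le -C\log n$ once $C(\beta-\tilde\lambda)>3/4$. No choice of $\lambda'$ removes the polynomial prefactor: every Chernoff bound misses the factor $\sqrt{2\pi M''_n}\asymp\sqrt n$ that the SLDP recovers.

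So on the window $-C\log n\le x\le -K$ you cannot rely on the proof of Theorem~\ref{them:0}, which only gives limits for each fixed $x$. You need the upper bound \eqref{eq:LBP} of Proposition~\ref{prop:finiteREM} uniformly in $x$ on that window. Equivalently, you need the $(1+o(1))$ in Lemma~\ref{lem:Bovier} to be uniform over $a\in[\tilde A_n-C\log n,\tilde A_n]$, which is an input the paper does not state.

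Given that uniformity, the argument closes. Since $M''_n\asymp n$ near $\tilde\lambda$, you have $|\tilde\Lambda^x_n-\tilde\Lambda_n|=O(\log n/n)$, so the prefactors in \eqref{eq:LBP} stay bounded and $\mathbf{K}_n(h,[x,\infty))\lesssim e^{-\tilde\Lambda_n x}$ on the window. Summing over unit intervals then gives $\int_{[-C\log n,-K]}e^{\beta y}\,\mathbf{K}_n(h,dy)\lesssim e^{-(\beta-\tilde\Lambda_n)K}$, which is what (ii) requires.
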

\begin{remark}
Note that the above theorem does not involve the sequence $(A_n)_{n\in \N}$. For a definition of the distribution $\textup{PD}(\tilde{\lambda}/\beta,0)$, see \cite[Equation $(3)$, Definition 1 and Corollary 9]{Pitman}.
\end{remark}
\begin{proof}
Let $(A_n)_{n\in \N}$ be the sequence defined in Theorem \ref{them:1}. We have the following equivalence
\been{
\Gs(\sigma)= \frac{e^{\beta (H_n(h,\sigma)-A_n(h))}}{\sum_{\tau \in \mathcal{G}_n(U)} e^{\beta (H_n(h,\tau)-A_n(h))}}.
}
 Thus, Theorem \ref{them:1} and \cite[Lemma 1.2.3]{Talcavi} complete the proof.
\end{proof}
{
\subsection{Sketch of the proof}

The proof of Theorem \ref{them:1} is based on the computation of the Laplace transform of the point process $\Hn$ (see formula \eqref{eq:RLaplace_trasf}).

For any measurable set $\mathfrak{U}$ and fixed $h\in\R^{\N}$, the random variable $\Hn(\mathfrak{U})$ is a weighted sum of $2^n$ independent Bernoulli random variables indexed by $\sigma\in\Omega_n$, each with parameter $\Qs([\sigma_{[n]}])$ and weight $\1{H_n(h,\sigma)-A_n(h)\in\mathfrak{U}}$. 
If $\rho\in(0,1)$, the parameters $\Qs([\sigma_{[n]}])$ decay exponentially fast in $n$ (see Lemma \ref{lem:Qbounds}). Hence, by Le Cam's Poisson approximation theorem \cite{LeCam1960}, conditionally on $h$, the variable $\Hn(\mathfrak{U})$ is asymptotically Poisson with parameter
\begin{equation}
\sum_{\sigma\in\Omega_n}\Qs([\sigma_{[n]}])\,
\1{H_n(h,\sigma)-A_n(h)\in\mathfrak{U}}.
\end{equation}
This quantity coincides with the kernel $\mathbf{K}_n(h,\mathfrak{U})$ defined in \eqref{eq:Kconve}, for $C_n(h)=nc=n\rho (\log(2)-\log(1+|m|))$.

Using the Laplace transform of a Poisson distribution, we obtain
\begin{equation}
\E_U\left[e^{-\int_{\R} f(x) \Hn(dx)}\right]
\simeq
e^{\int_{\R}(e^{-f(x)}-1)\mathbf{K}_n(h,dx)},
\end{equation}
where $\E_U$ denotes expectation with respect to the thinning variables. This yields the approximation
\begin{equation}
\E\[\E_U\left[e^{-\int_{\R} f(x) \Hn(dx)}\right]\]
\simeq
\E\left[
\exp\left\{\int_{\R}(e^{-f(x)}-1)\mathbf{K}_n(h,dx)\right\}
\right].
\end{equation}
Therefore, by the dominated convergence theorem, if the kernels $\mathbf{K}_n$ converge vaguely to the exponential measure $\D_{\tilde\lambda}$, by Lemma \ref{lem:DDD}, $\Hn$ converges in distribution to a Poisson point process follows. The vague convergence of $\mathbf{K}_n$ is precisely the content of Theorem \ref{them:0}.

The main difficulty is thus proving Theorem \ref{them:0}. 
The idea is to approximate the conditional distribution of $H_n(h,\sigma)$ via large deviation theory. For $A_n(h)=n\tilde a+o(n)$, Cram\'er's theorem gives
\begin{equation}
\mathbf{K}_n(h,[x,\infty))
=
e^{nc}\,
\Ps\big(H_n(h,\sigma)-A_n(h)\ge x\big)
\simeq
e^{nc-n G^*(\tilde a+n^{-1}x)+o(n)}.
\end{equation}
Using a Taylor expansion of $G^*$ around $\tilde a$ and the relation $G^*(\tilde a)=c$ and $\partial_aG^*( a)|_{a=\tilde a}=\tilde \lambda$, we obtain heuristically
\begin{equation}
\mathbf{K}_n(h,[x,\infty))
\simeq
e^{-\tilde \lambda x + o(n)}.
\end{equation}
If we can control the $o(n)$ correction in order to make it equal to $-\log(\tilde \lambda)+o(1)$, we get
\been{
\mathbf{K}_n(h,[x_1,x_2])\simeq \frac{e^{o(1)}}{\tilde \lambda}\(e^{-\tilde \lambda x_1}-e^{-\tilde \lambda x_2}\)\simeq e^{o(1)}\int^{x_2}_{x_1}\D_{\tilde \lambda}(dx)
}
for any $-\infty<x_1\leq x_2<\infty$, proving the vague convergence to the exponential measure.

To achieve this level of precision, standard large deviation estimates are not sufficient. We therefore rely on sharp large deviation results for random weighted sums of i.i.d.\ variables, as developed in \cite{Bovier}, which allow us to control the subexponential corrections beyond the leading rate function. This constitutes the core technical part of the paper.

\subsection{Organization of the paper}

The manuscript is organized as follows. The next subsection introduces the notation used throughout the paper. Section \ref{sec:2} proves Theorem \ref{them:1}, assuming Theorem \ref{them:0}. Sections \ref{sec:4} and \ref{sec:5} develop the sharp large deviation analysis and contain the proof of Theorem \ref{them:0}, which forms the technical core of the work.

}
\subsection{Main notation}
We denote by $\bar{R}$ the set of extended real numbers. We also define
\been{
\R_{> 0}:=(0,\infty),\quad \R_{\geq 0}:=[0,\infty),\quad \bar\R_{> 0}:=(0,\infty],\quad \bar\R_{\geq 0}:=[0,\infty].
}
An \emph{extended real function} is a function that takes values on $\bar{R}$ (or its subset). We say that an extended real function is continuous if
\been{
\limsup_{x\to x_0}f(x)=\liminf_{x\to x_0}f(x)=f(x_0),\quad \forall x_0\in A,
}
where both limits and $f(x_0)$ can be $\infty$ or $-\infty$. With this notation, if $f$ is an extended real-valued continuous function, then the set $\{x:\,f(x)<\infty\}$ is open in $A$.

We use the notation $\log(0)=-\infty$ and $\sign(0)=0$.


In the following, $\E[\cdot]$ denotes the expectation with respect to the random variables $h$.
\section{Proof of Theorem \ref{them:1}}
\label{sec:2}
\noindent
In this section, we prove Theorem \ref{them:1} assuming the validity of Theorem \ref{them:0}. We will proceed by computing the Laplace transform of the point processes $\mathbf{H}_n$.

Given a locally finite random measure $\mathbf{R}$ on $\R$, its Laplace transform is the functional defined on the set of all measurable non-negative functions $f:\R\to \bar\R_{\geq 0}$ by
\been{
\label{eq:RLaplace_trasf}
\mathcal{L}_{\mathbf{R}}(f):=\E_{\mathbf{R}}\[\exp\(-\int^{\infty}_{-\infty}f(x)\mathbf{R}(dx)\)\],
}
where, here, the operator $\E_{\mathbf{R}}$ denotes the expectation with respect to the randomness of the measure $\mathbf{R}$.

The distribution of a locally finite random measure is uniquely determined by its Laplace transform evaluated on the class of bounded, continuous, non-negative functions with compact support \cite{Kallenberg}. Therefore, to prove Theorem \ref{them:1}, it suffices to show that the sequence of Laplace transforms
\been{
(\mathcal{L}_{\mathbf{H}_n}(f))_{n\in \N},
}
converges to $\mathcal{L}_{\PPP_{\tilde \lambda}}(f)$ for any $f$ over this particular class of functions.

We begin by providing the Laplace transform of $\PPP_{\tilde \lambda}$.
\begin{lem}
\label{lem:DDD}
For any measurable $f:\R\to \bar\R_{\geq 0}$
\been{
\mathcal{L}_{\PPP_{\tilde \lambda}}(f)=\exp\(-\int^{\infty}_{-\infty}\(1-e^{-f(x)}\)\D_{\tilde \lambda}(dx)\).
}
\end{lem}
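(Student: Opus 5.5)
The plan is to prove the Laplace functional formula for a Poisson point process with intensity $\D_{\tilde\lambda}$ by the standard approximation route: first for simple functions supported on sets of finite $\D_{\tilde\lambda}$-measure, then by monotone limits for general measurable $f\ge 0$.

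\emph{Step 1: indicator functions.} Take $f=\sum_{j=1}^k a_j \bm{1}_{\mathfrak U_j}$ with $a_j\in[0,\infty]$ and pairwise disjoint Borel sets $\mathfrak U_j$ with $\D_{\tilde\lambda}(\mathfrak U_j)<\infty$. By definition of $\PPP_{\tilde\lambda}$, the counts $\PPP_{\tilde\lambda}(\mathfrak U_j)$ are independent Poisson random variables with means $\D_{\tilde\lambda}(\mathfrak U_j)$. Factorizing the expectation and using the Poisson generating function $\E[s^N]=e^{-\mu(1-s)}$ with $s=e^{-a_j}$ (with $s=0$ when $a_j=\infty$, using the convention $e^{-\infty}=0$) gives
\been{
\mathcal L_{\PPP_{\tilde\lambda}}(f)=\prod_{j}\exp\(-(1-e^{-a_j})\D_{\tilde\lambda}(\mathfrak U_j)\)=\exp\(-\int_{\R}(1-e^{-f(x)})\D_{\tilde\lambda}(dx)\).
}

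\emph{Step 2: general $f$.} For a measurable $f:\R\to\bar\R_{\ge0}$, choose simple functions $0\le f_k\uparrow f$, each supported on $[-k,k]$ (where $\D_{\tilde\lambda}$ is finite) and taking finitely many values. Then $\int f_k\,d\PPP_{\tilde\lambda}\uparrow\int f\,d\PPP_{\tilde\lambda}$ pathwise by monotone convergence. Hence $e^{-\int f_k d\PPP}\downarrow e^{-\int f d\PPP}$, and bounded convergence (everything lies in $[0,1]$) gives convergence of the left side. On the right, $1-e^{-f_k}\uparrow 1-e^{-f}$, so monotone convergence applies to the integrals against $\D_{\tilde\lambda}$. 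This remains valid when the limit integral is $+\infty$, in which case both sides equal $0$.

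The only delicate point is the bookkeeping with infinite values: $f=\infty$ on a set, and infinite total mass of $\D_{\tilde\lambda}$ near $-\infty$. Local finiteness of $\D_{\tilde\lambda}$ together with the monotone limits handles both. Otherwise the argument is standard (cf. \cite{Kallenberg}).
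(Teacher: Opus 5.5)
Your proof is correct. The paper gives no argument beyond citing \cite{Kallenberg}, and your route is the standard one behind that citation: reduce to simple functions with disjoint supports of finite $\D_{\tilde\lambda}$-measure, using independence and the Poisson generating function, then pass to monotone limits. Your truncation to $[-k,k]$ handles the infinite mass of $\D_{\tilde\lambda}$ near $-\infty$, and the convention $e^{-\infty}=0$ correctly covers points where $f=\infty$.
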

\begin{proof}
The Laplace transform of a PPP is well known in the theory of random measures (see, e.g., \cite{Kallenberg}).
\end{proof}
We now study the measure $\Qs$.
\begin{lem}
\label{lem:Qbounds}
There exists some $\delta>0$ such that, for any $\tau\in \{-1,1\}^n$,
\been{
\label{eq:boundQ}
\Qs([\tau])\leq e^{-n\delta},
}
\end{lem}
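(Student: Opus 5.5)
The bound follows from an explicit computation of $\Ps([\tau])$ and a direct comparison with the prefactor in the definition of $\Qs$. By independence of the coordinates under $\Ps$,
\[
\Ps([\tau])=\prod_{i=1}^n\frac{1+\tau_i m}{2}\le\(\frac{1+|m|}{2}\)^n,
\]
since each factor $\frac{1+\tau_i m}{2}$ takes one of the two values $\frac{1\pm m}{2}$, and the larger of these is $\frac{1+|m|}{2}$. This holds uniformly in $\tau\in\{-1,1\}^n$.

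Combining this with the definition of $\Qs$ gives
\[
\Qs([\tau])\le e^{n\rho(\log 2-\log(1+|m|))}\,e^{-n(\log 2-\log(1+|m|))}=e^{-n(1-\rho)(\log 2-\log(1+|m|))}.
\]
We therefore set $\delta:=(1-\rho)\(\log 2-\log(1+|m|)\)$. Positivity of $\delta$ uses $\rho\in(0,1)$ together with $|m|<1$ (recall $m\in(-1,1)$). The latter gives $1+|m|<2$, so $\log 2-\log(1+|m|)>0$.

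There is no real obstacle here. The only point requiring care is to bound each factor by the worst case $\frac{1+|m|}{2}$, which covers both signs of $m$, so that the estimate is uniform in $\tau$ and $n$. That uniformity is what the Le Cam approximation in the sketch of the proof needs.
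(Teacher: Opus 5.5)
Your proposal is correct and is essentially the paper's proof. You bound each factor $\frac{1+m\tau_i}{2}$ by $\frac{1+|m|}{2}$, multiply by the prefactor $\left(\frac{2}{1+|m|}\right)^{n\rho}$, and obtain the same $\delta=(1-\rho)\left(\log 2-\log(1+|m|)\right)>0$.
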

\begin{proof}
Let $1+ |m|\geq  1+m\tau_i$,
\been{
\Qs([\tau])=\(\frac{2}{1+|m|}\)^{n\rho}\prod^n_{i=1}\frac{1+m\tau_i}{2}\leq \(\frac{1+|m|}{2}\)^{n(1-\rho)}\prod^n_{i=1}\frac{1+m\tau_i}{1+|m|}\leq \(\frac{1+|m|}{2}\)^{n(1-\rho)}.
}
Finally, since $m\in (-1,1)$, $(1+|m|)/2\in (0,1)$. Thus we take
\been{
\delta =(1-\rho)\(\log(2)-\log(1+|m|)\)>0.
}
This completes the proof.
\end{proof}
We now compute the Laplace transform of the point process $\Hn$.
\begin{lem}
\label{lem:Lap_H}
For any measurable $f:\R\to \bar\R_{\geq 0}$
\been{
\mathcal{L}_{\Hn}(f)=\E\[\exp\(\sum_{\sigma \in \Omega_n}\log\(1+ \Qs([\sigma_{[n]}])\(e^{-f(H_n(h,\sigma)-A_n(h))}-1\)\)\)\]
}
\end{lem}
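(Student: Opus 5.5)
The proof is a direct computation: condition on $h$, use independence of the thinning variables, and then take the expectation over $h$. Since $\Hn(\mathfrak{U})$ is defined through indicators, for any measurable $f\ge 0$ we have
\[
\int_{\R} f(x)\,\Hn(dx)=\sum_{\sigma\in\Omega_n}\1{U_\sigma<\Qs([\sigma_{[n]}])}\, f\big(H_n(h,\sigma)-A_n(h)\big).
\]
This first holds for indicators $f=\1{x\in\mathfrak{U}}$ by definition. It extends to simple functions by linearity, and to general nonnegative measurable $f$ by monotone convergence, since $\Hn$ is a finite sum of Dirac masses. The only care needed is that $f$ may take the value $\infty$, with the convention $e^{-\infty}=0$.

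Next, fix $h$ (and hence $A_n(h)$). The variables $(U_\sigma)_{\sigma\in\Omega_n}$ are independent of $h$ and mutually independent, and $\Omega_n$ is finite with $2^n$ elements. Therefore the conditional expectation of $\exp(-\int f\,d\Hn)$ factorizes into a product over $\sigma\in\Omega_n$. For each $\sigma$, the indicator $\1{U_\sigma<q_\sigma}$ is Bernoulli with parameter $q_\sigma:=\Qs([\sigma_{[n]}])$. By Lemma \ref{lem:Qbounds}, $q_\sigma\le e^{-n\delta}<1$, so this is a genuine probability. Writing $y_\sigma:=f(H_n(h,\sigma)-A_n(h))$, each factor equals
\[
1-q_\sigma+q_\sigma e^{-y_\sigma}=1+q_\sigma\big(e^{-y_\sigma}-1\big).
\]

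Finally, we write the product as the exponential of a sum of logarithms and take $\E$ over $h$, using Fubini on the product space of $h$ and $U$. The integrand is bounded in $[0,1]$ and jointly measurable because $A_n$ is $h$-measurable, so Fubini applies. This gives the stated formula for $\mathcal{L}_{\Hn}(f)$. The logarithms are finite, since each factor is at least $1-q_\sigma\ge 1-e^{-n\delta}>0$ by Lemma \ref{lem:Qbounds}; this is the only point where that bound is used. Nothing here is a real obstacle; the only bookkeeping is the measurability and Fubini justification.
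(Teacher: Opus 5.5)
Your proposal is correct and follows the same route as the paper: condition on $h$, factor the expectation over the independent thinning variables $U_\sigma$, evaluate each factor as $1+q_\sigma(e^{-y_\sigma}-1)$ using $q_\sigma\le 1$ from Lemma~\ref{lem:Qbounds}, and rewrite the product as the exponential of a sum of logarithms. Your remarks on Fubini, the convention $e^{-\infty}=0$, and finiteness of the logarithms are details the paper leaves implicit; note also that you invoke Lemma~\ref{lem:Qbounds} twice (once for $q_\sigma\le 1$, once for the logarithms), not only in the one place you say.
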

\begin{proof}
By definition, we have
\been{
\int^{\infty}_{-\infty}f(x)\Hn(dx)=\sum_{\sigma\in \Omega_n}f(H_n(h,\sigma)-A_n(h))\1{U_{\sigma}\leq \Qs([\sigma_{[n]}])}
}
Consequently, denoting by $\E_U$ the expectation over the uniform random variables $(U_{\sigma})_{\sigma \in \Omega_n}$, 
\been{
\aled{
\mathcal{L}_{\Hn}(f)&=\E\[\E_U\[\exp\(-\sum_{\sigma\in \Omega_n}f(H_n(h,\sigma)-A_n(h))\1{U_{\sigma}\leq \Qs([\sigma_{[n]}])}\)\]\]\\
&=\E\[\prod_{\sigma \in \Omega_n}\E_U\[e^{-f(H_n(h,\sigma)-A_n(h))\1{U_{\sigma}\leq \Qs([\sigma_{[n]}])}}\]\],
}
}
where we used the fact that the variables $U_{\sigma}$ are independent. Using the equality $e^{a\bm{1}}=1+\bm{1}(e^a-1)$ for any $\bm{1}\in \{0,1\}$, we have
\been{
\aled{
\mathcal{L}_{\Hn}(f)&=\E\[\prod_{\sigma \in \Omega_n}\E_U\[e^{-f(H_n(h,\sigma)-A_n(h))\1{U_{\sigma}\leq \Qs([\sigma_{[n]}])}}\]\]\\
&=\E\[\prod_{\sigma \in \Omega_n}\E_U\[1+\1{U_{\sigma}\leq \Qs([\sigma_{[n]}])}\(e^{-f(H_n(h,\sigma)-A_n(h))}-1\)\]\]\\
&=\E\[\prod_{\sigma \in \Omega_n}\(1+\E_U\[\1{U_{\sigma}\leq \Qs([\sigma_{[n]}])}\]\(e^{-f(H_n(h,\sigma)-A_n(h))}-1\)\,\)\]
}
}
By \eqref{eq:boundQ}, $\Qs([\sigma_{[n]}])\in [0,1]$; so $\E_U\[\1{U_{\sigma}\leq \Qs([\sigma_{[n]}])}\]=\Qs([\sigma_{[n]}])$, completing the proof.
\end{proof}
We are now ready to prove the theorem.
\begin{proof}[Proof of Theorem \ref{them:1}]
Define
\been{
c=\rho(\log(2)-\log(1+|m|)),\quad C_n(h):=n c.
}
Since $\rho\in (0,1)$
\been{
c< \log(2)-\E[\log(1+m\sign(h_1))]=\gamma,\quad C_n(h)<n\log(2)-\sum^n_{i=1}\log(1+m\sign(h_i))=\Gamma_n(h).
}
Hence, by Theorem \ref{them:0}, there exists $h-$measurable a sequence $(A_n)_{n\in \N}$ such that the sequence of measure kernels $(\mathbf{K}_n)_{n\in \N}$, defined by
\been{
\aled{
\mathbf{K}_n(h,\mathfrak{U})&=\Qs(\{\sigma:\,H_n(h,\sigma)-A_n(h)\in \mathfrak{U}\})\\
&=e^{n\,c}\Ps(\{\sigma:\,H_n(h,\sigma)-A_n(h)\in \mathfrak{U}\}),\quad h\in \R^{\N},\,\mathfrak{U}\in \mathcal{B}(\R)
}
}
converges vaguely, $\Gas$, to the deterministic measure $\D_{\tilde{\lambda}}$, with $\tilde{\lambda}$ defined in \eqref{eq:solac0}.

For any $x\in [-1,0]$ and $\alpha\in [0,1)$, we have
\been{
\frac{\alpha}{1-\alpha}x\leq \log(1+\alpha x)\leq \alpha x.
}
Consequently, for any measurable $f:\R\to \bar{R}_{\geq 0}$,
\been{
\aled{
\sum_{\sigma\in \Omega_n}\log\(1+ \Qs([\sigma_{[n]}])\(e^{-f(H_n(h,\sigma)-A_n(h))}-1\)\)&\leq \sum_{\sigma\in \Omega_n}\Qs([\sigma_{[n]}])\(e^{-f(H_n(h,\sigma)-A_n(h))}-1\)\\
&=\int^{\infty}_{-\infty}\(e^{-f(x)}-1\)\mathbf{K}_n(h,dx),
}
}
and
\been{
\aled{
\sum_{\sigma\in \Omega_n}\log\(1+ \Qs([\sigma_{[n]}])\(e^{-f(H_n(h,\sigma)-A_n(h))}-1\)\)&\geq \sum_{\sigma\in \Omega_n}\frac{\Qs([\sigma_{[n]}])}{1-\Qs([\sigma_{[n]}])}\(e^{-f(H_n(h,\sigma)-A_n(h))}-1\)\\
&\geq \frac{1}{1-e^{-n\delta}}\sum_{\sigma\in\Omega_n}\Qs([\sigma_{[n]}])\(e^{-f(H_n(h,\sigma)-A_n(h))}-1\)\\
&=\frac{1}{1-e^{-n\delta}}\int^{\infty}_{-\infty}\(e^{-f(x)}-1\)\mathbf{K}_n(h,dx),
}
}
where the second inequality follows from \eqref{eq:boundQ}. Therefore, for any bounded, continuous, non-negative, and compactly supported function $f$,
\been{
\lim_{n\to \infty}\sum_{\sigma\in \Omega_n}\log\(1+ \Qs([\sigma_{[n]}])\(e^{-f(H_n(h,\sigma)-A_n(h))}-1\)\)=\int^{\infty}_{-\infty}\(e^{-f(x)}-1\)\D_{\widetilde{\lambda}}(dx),\quad \gas
}
Combining this limit with Lemma \ref{lem:DDD} and \ref{lem:Lap_H}, and applying the Dominated Convergence Theorem, we obtain
\been{
\aled{
\lim_{n\to \infty}\mathcal{L}_{\Hn}(f)&=\E\[\lim_{n\to \infty}\exp\(\sum_{\sigma\in \Omega_n}\log\(1+ \Qs([\sigma_{[n]}])\(e^{-f(H_n(h,\sigma)-A_n(h))}-1\)\)\)\]\\
&=e^{\int^{\infty}_{-\infty}\(e^{-f(x)}-1\)\D_{\widetilde{\lambda}}(dx)}=\mathcal{L}_{\PPP_{\widetilde{\lambda}}}(f),
}
}
for any bounded, continuous, and compactly supported function $f$, completing the proof.
\end{proof}

\section{Sharp large deviation bound at finite $n$}
\label{sec:4}
\noindent
In this section, we develop an approximation, precise up to errors of order $o(1)$, for the probability 
\been{
\Ps(\{\sigma:\,H_n(h,\sigma)>a\}),\quad a>0,
}
in the regime of large but finite $n$. Our approach relies on the \emph{Strong (local) Large Deviation Principle} (SLDP), a refined version of the classical Large Deviation Principle (LDP).

The standard LDP (G\"artner--Ellis Theorem \cite[Theorem~2.3.6]{Dembo}) describes the exponential decay of rare-event probabilities through a rate 
function given by the Fenchel--Legendre transform ($\FLT$) of the Moment Generating Function ($\MGF$).

Although the rate function determines the leading exponential asymptotics, it cannot give a 
precise estimate of the probability itself, since it contains no information on the 
subleading corrections of order $O(1)$.

The SLDP refines the LDP by controlling subleading corrections of order $o(n)$ beyond the
leading exponential term. Bahadur and Ranga Rao established the standard SLDP for a sum of i.i.d. random variables \cite{Bahadur}. This result was later extended to general sequences of random variables by Chaganty and Sethuraman \cite{Chaganty}.

In this section, we use the version by Bovier and Mayer, who developed a conditional strong large deviation principle that provides the asymptotic approximation of the tail probability of weighted sums of i.i.d. random variables, conditionally on the i.i.d. random weights \cite[Theorem 1.6]{Bovier}. In our setting, the random i.i.d. variables are the spin components $\sigma_1,\sigma_2,\cdots $ and the random weights are the field components $h_1,h_2,\cdots$.

Throughout this section, we fix $\e \in (0, \tfrac12)$ and consider
\been{
\label{eq:mcond}
m \in [-1 + 2\e, 1 - 2\e].
}
We denote by $\langle \cdot \rangle$ the expectation over $\sigma$ with respect to the probability measure $\Ps$, conditionally on $h$. The $\MGF$ of $H_n$, conditionally on $h$, is defined as
\been{
M_n(h,\lambda):=\log\langle e^{\lambda H_{n}}\rangle=\log\( \prod^n_{i=1}\sum_{\sigma_i\in\{-1,1\}}\frac{1+m\sigma_i}{2}e^{\lambda h_i(\sigma_i-m)}\)=\sum^n_{i=1}g(\lambda h_i)
}
with
\been{
\label{eq:gm2}
g(\lambda):=\log\langle e^{\lambda (\sigma_1-m)}\rangle=\log\(\frac{1+m}{2}e^{\lambda (1-m)}+\frac{1-m}{2} e^{-\lambda (1+m)}\).
}
We denote by $M^*_{n}(h,\cdot)$ the $\FLT$ of $M_{n}(h,\cdot)$:
\been{
M^*_{n}(h,a):=\sup_{\lambda\in \R}(\lambda a-M_{n}(h,\lambda)).
}
We also define
\been{
\label{eq:defJGamma}
\Sigma_{n}(h):=-m\sum^n_{i=1}h_i+\sum^n_{i=1}|h_i|,\qquad \Gamma_{n}(h):=-\sum^n_{i=1}\log\(1+\sign(h_i)m\)+n\log(2),
}
and the set
{
\been{
\label{eq:defL}
\mathfrak{L}^{\e}_{n}:=\left\{h\in \R^{\N}:\quad 2\pi\sum^n_{i=1}h^2_i\leq n^{3/2},\quad 2\pi\min_{1\leq i\leq n} h^2_i\geq \frac{16}{n^5\e^8},\quad \Sigma_n(h)\in (n^{4/5}, n^{3/2}) \right\},
}
}
We denote by $M'_n(h,\cdot)$ and $M''_n(h,\cdot)$ the derivatives with respect to the second argument, keeping $h$ fixed.
\begin{proposition}
\label{prop:finiteREM}
Assume that Assumption \ref{assum:1} holds, and fix $\lambda^*>1$ and $\e\in (0,\tfrac{1}{2})$. There exists $N_{\e}>0$ such that, for any $n>N_{\e}$, $\Gae$ $h \in \mathfrak{L}^{\e}_n$, and any
\been{
\label{eq:Ccond}
C \in (\e \Gamma_n(h), (1-\e)\Gamma_n(h)),
}
there exists a set $\mathfrak{R}_{n,\lambda^*}(h)$ such that the following statements hold:
\begin{itemize}
\item for any $x\in \mathfrak{R}_{n,\lambda^*}(h)$
\been{
\label{eq:LBP}
\Ps(\{\sigma:\,H_n(h,\sigma)\geq \tilde A_n(h)+x\})\leq \sqrt{\frac{ M''_n(h,\tilde\Lambda_n(h))}{ M''_n(h,\tilde\Lambda^{x}_n(h))}} \frac{1}{\tilde \Lambda^{x}_n(h)}e^{-C-\tilde \Lambda_n(h)x }(1+o(1));
}
\item for any $x\in \mathfrak{R}_{n,\lambda^*}(h)$
\been{
\label{eq:UBP}
 \Ps(\{\sigma:\,H_n(h,\sigma)\geq \tilde A_n(h)+x\})\geq \sqrt{\frac{ M''_n(h,\tilde\Lambda_n(h))}{ M''_n(h,\tilde\Lambda^{x}_n(h))}}\frac{1}{\tilde \Lambda^{x}_n(h)}e^{-C-\tilde \Lambda^{x}_n(h)x }(1+o(1)).
}
\end{itemize}
Here $\tilde A_n(h)$, $\tilde \Lambda_n(h)$ and $\tilde \Lambda^{x}_n(h)$ are $h-$measurable random variables such that, if $h \in \mathfrak{L}^{\e}_n$ and $x\in \mathfrak{R}_{n,\lambda^*}(h)$, they are solutions of the following coupled equations
\been{
\label{eq:Gstareq}
\begin{cases}
\aled{
&M^*_n(h, \tilde A_n(h))+\frac{1}{2}\log(2\pi M''_n(h,\tilde\Lambda_n(h)))=C\quad \textup{and}\quad \tilde A_n(h)\in (0, \Sigma_{n}(h)),\\
&M'_n(h,\tilde \Lambda_n(h))= \tilde A_n(h),\\
&M'_n(h,\tilde \Lambda^{x}_n(h))= \tilde A_n(h)+x.
}
\end{cases}
}
\end{proposition}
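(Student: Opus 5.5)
The plan is to reduce the two-sided estimate to the conditional strong large deviation principle of Bovier and Mayer \cite[Theorem 1.6]{Bovier}, applied to the weighted sum $H_n=\sum_i h_i(\sigma_i-m)$ with $h$ frozen, combined with an exponential change of measure (Esscher tilt) at the parameter $\tilde\Lambda^{x}_n(h)$.

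\textbf{Step 1: existence and uniqueness of $\tilde A_n$ and $\tilde\Lambda_n$.} Since $g$ is smooth and strictly convex, $\lambda\mapsto M_n(h,\lambda)$ is strictly convex, with $M'_n(h,0)=0$ and $M'_n(h,\lambda)\to\Sigma_n(h)$ as $\lambda\to\infty$. Hence $M'_n(h,\cdot)$ is a bijection from $\R_{>0}$ onto $(0,\Sigma_n(h))$. Along this branch, $a\mapsto M^*_n(h,a)$ increases continuously from $0$ to $\Gamma_n(h)$. The limit $\Gamma_n(h)$ is $-\log$ of the $\Ps$-probability of the unique maximizer $\sigma_i=\sign(h_i)$.

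The correction $\tfrac12\log(2\pi M''_n(h,\Lambda))$ is $O(\log n)$ on $\mathfrak{L}^\e_n$:
\begin{itemize}
\item the upper bound on $\sum h_i^2$ bounds $M''_n$ above;
\item the lower bound on $\min h_i^2$ and the constraint $m\in[-1+2\e,1-2\e]$ keep $M''_n$ bounded below, polynomially in $n$, for $\Lambda$ in the relevant range.
\end{itemize}
Since $\Gamma_n(h)$ is of order $n$, every $C\in(\e\Gamma_n,(1-\e)\Gamma_n)$ lies strictly inside the range of $a\mapsto M^*_n(h,a)+\frac12\log(2\pi M''_n(h,\Lambda(a)))$ once $n>N_\e$. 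The intermediate value theorem then gives a solution $\tilde A_n(h)$. We choose it measurably, for example as the smallest root, and set $\tilde\Lambda_n=(M'_n)^{-1}(\tilde A_n)$.

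\textbf{Step 2: definition of $\mathfrak{R}_{n,\lambda^*}(h)$.} We take $\mathfrak{R}_{n,\lambda^*}(h)$ to be the set of $x$ with $\tilde A_n+x\in(0,\Sigma_n(h))$ and $\tilde\Lambda^x_n:=(M'_n)^{-1}(\tilde A_n+x)\in(1/\lambda^*,\lambda^*)$. On this set, Bovier--Mayer's theorem applies uniformly in $h\in\mathfrak{L}^\e_n$. Their hypotheses amount to:
\begin{itemize}
\item a non-lattice / smoothing condition on the tilted characteristic function of $\sigma_i-m$ with weights $h_i$;
\item a lower bound on the tilted variance.
\end{itemize}
The conditions defining $\mathfrak{L}^\e_n$, namely the bound $\sum h_i^2\le n^{3/2}/2\pi$ and the minimum of $h_i^2$, are exactly what is needed to verify these. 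Under them one obtains
\[
\Ps(H_n\ge a)=\frac{e^{-M^*_n(h,a)}}{\Lambda\sqrt{2\pi M''_n(h,\Lambda)}}\bigl(1+o(1)\bigr),\qquad M'_n(h,\Lambda)=a,
\]
with $a=\tilde A_n+x$ and $\Lambda=\tilde\Lambda^x_n$.

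\textbf{Step 3: reduction to the stated bounds.} Using the first equation of \eqref{eq:Gstareq}, we substitute $e^{-M^*_n(h,\tilde A_n)}=e^{-C}\sqrt{2\pi M''_n(h,\tilde\Lambda_n)}$. It remains to compare $M^*_n(h,\tilde A_n+x)-M^*_n(h,\tilde A_n)$ with $\tilde\Lambda x$. Since $\partial_aM^*_n=\Lambda(a)$ is increasing in $a$, convexity gives
\[
\tilde\Lambda_n x\le M^*_n(\tilde A_n+x)-M^*_n(\tilde A_n)\le\tilde\Lambda^x_n x .
\]
The left inequality yields the upper bound \eqref{eq:LBP}, and the right inequality yields the lower bound \eqref{eq:UBP}. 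Both hold for either sign of $x$ by monotonicity.

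\textbf{Main obstacle.} The hard part is Step 2: checking the hypotheses of \cite[Theorem 1.6]{Bovier} with an error $o(1)$ that is uniform over $h\in\mathfrak{L}^\e_n$ and over $x\in\mathfrak{R}_{n,\lambda^*}$. This includes the decay of the tilted characteristic function away from the origin; the two-point variables $\sigma_i$ are lattice, so the non-lattice behaviour must come from the distinct weights $h_i$. Here I would first try the Esscher tilt followed by an explicit Berry--Esseen/Edgeworth bound for the tilted sum. The third-moment term is controlled by $\sum|h_i|^3\le(\max|h_i|)\sum h_i^2$, and the characteristic function is bounded using $\prod_i|\cos(t h_i)|$ together with the lower bound on $\min h_i^2$.
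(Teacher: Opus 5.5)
Your Steps 1 and 3 follow the paper's argument. $\tilde A_n(h)$ is obtained by the intermediate value theorem between $A_n(h,C-\tfrac34\log n)$ and $A_n(h,C+2\log n)$ and taken as the smallest root. The two bounds then come from the tangent-line inequalities for the convex $M^*_n(h,\cdot)$, which hold for either sign of $x$.

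One point in Step 1 is under-justified. A polynomial lower bound on $M''_n$ does not follow from $\min_i h_i^2$ alone, because $g''(\lambda h_i)\ge(1-m^2)e^{-2\lambda|h_i|}$ degrades as $\Lambda$ grows. The paper gets it from Lemma \ref{lem:bounds2}, $2\pi M''_n(h,\Lambda_n(h,a))\ge \frac{4}{n^6\e^4}\bigl(\Gamma_n(h)-M^*_n(h,a)\bigr)^2$, and this is where $C<(1-\e)\Gamma_n(h)$ enters.

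The genuine gap is Step 2, the step you yourself flag as the main difficulty. You claim that Bovier--Mayer applies uniformly over $h\in\mathfrak{L}^\e_n$ and that the constraints defining $\mathfrak{L}^\e_n$ verify its hypotheses. You then plan to prove the local estimate directly: a tilt plus Berry--Esseen/Edgeworth, with the characteristic function controlled through $\min_i h_i^2$. This cannot work, because $\mathfrak{L}^\e_n$ imposes no arithmetic non-degeneracy on the weights.

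Take $h_i\equiv 1$, which lies in $\mathfrak{L}^\e_n$ for all large $n$. Then $H_n=2K-n(1+m)$ with $K$ binomial is a lattice variable of span $2$. Every tilted factor $|p_ie^{2ith_i}+1-p_i|$ of the characteristic function equals $1$ at $t=\pi$. The true asymptotics carry the lattice prefactor $\frac{2e^{-\Lambda\delta_n}}{1-e^{-2\Lambda}}$ instead of $\frac{1}{\Lambda}$, where $\delta_n\in[0,2)$ is the distance from the threshold to the next lattice point. This discrepancy oscillates with $\delta_n$ and is not $1+o(1)$. So no estimate uniform on $\mathfrak{L}^\e_n$ exists, and a lower bound on $\min_i h_i^2$ says nothing about non-lattice behaviour.

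In the paper the SLDP enters only through Lemma \ref{lem:Bovier}, as an almost-sure statement in $h$ imported from \cite[Theorem 1.6]{Bovier}. The non-lattice input comes from the randomness of the weights; this is the role of the absolutely continuous part of the law of $h_1$ in Assumption \ref{assum:1}. The exceptional null set is why the proposition is stated for $\Ph$-almost every $h\in\mathfrak{L}^\e_n$. The set $\mathfrak{L}^\e_n$ itself is used only for Step 1 (the two-sided bounds on $M''_n$) and, via $\Sigma_n(h)>n^{4/5}$, for the nonemptiness of $\mathfrak{R}_{n,\lambda^*}(h)$.

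Accordingly, the window for $x$ should be the one where Lemma \ref{lem:Bovier} holds, namely $0<\tilde A_n(h)+x<n\varsigma^*\wedge\Sigma_n(h)$ with $\varsigma^*=G'(\lambda^*)$. Your condition $\tilde\Lambda^x_n<\lambda^*$ means $\tilde A_n+x<M'_n(h,\lambda^*)$, which can exceed $n\varsigma^*$.
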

\begin{remark}
We postpone the precise definition of the set $\mathfrak{R}_{n,\lambda^*}(h)$ to Lemma \ref{lem:nonempty}. The definition of the set $\mathfrak{R}_{n,\lambda^*}(h)$ is crucial for the proof of Theorem \ref{them:1}.
\end{remark}
The proof of the above proposition relies on several intermediate lemmas, which we present in separate subsections. In the next subsection, we present the Bovier-Mayer SLDP result for our model, which constitutes the basis for the above proposition. In the following subsection, we state the analytical properties of the function $M$ and $M^*$. Hence, we establish the existence of solutions to the system of equations \eqref{eq:Gstareq}. The section concludes with the proof of the proposition.
\subsection{The Bovier-Mayer SLDP}
In this subsection, we present the main mathematical tool of this section, namely the Bovier-Mayer SLDP results. We show that $H_n$ satisfies the required assumptions and then state the corresponding SLDP specialized to our setting.

To this end, we first derive quantitative estimates for the function $g$, defined in \eqref{eq:gm2}, and its derivatives. A direct computation yields
\been{
\label{eq:ders}
g'(\lambda)=\frac{(1+m) e^{\lambda (1-m)}-(1-m)e^{-\lambda(1+m)}}{(1+m) e^{\lambda (1-m)}+(1-m)e^{-\lambda(1+m)}}-m,\quad g''(\lambda)=1-\left(g'(\lambda)+m\right)^2.
}
The following lemma provides essential bounds on the function $g$ and its derivatives, which will be needed for the application of the Bovier-Mayer SLDP.
\begin{lem}
\label{lem:bounds}
 We have 
 \been{
 \label{eq:g2trivial}
 g(0)=g'(0)=0,\quad g''(0)=1-m^2.
 }
 For  any $\lambda\in \mathbb{R}$
\been{
\label{eq:g2UB}
0\leq g(\lambda )\leq 2|\lambda|,\quad 0\leq \sign(\lambda)g'(\lambda )\leq 2, \quad g''(\lambda )\in (0,1).
}
and
\been{
\label{eq:g2eq}
g''(\lambda)\geq (1-m^2)e^{-2|\lambda|}
}
Moreover, for any $\lambda,\,\lambda'\in \R$
\been{
\label{eq:Lip}
|g''(\lambda )-g''(\lambda')|\leq 2 |\lambda-\lambda'|.
}
Finally, given $h_1\in \R$ and $\lambda\in \R$
\been{
\label{eq:g2}
g(\lambda h_1)\leq \log\(\frac{1+\sign(h_1)m}{2}\)+\frac{1-\sign(h_1)m}{1+\sign(h_1)m}e^{-2\lambda|h_1|}+\lambda(|h_1|-m h_1)
}
\end{lem}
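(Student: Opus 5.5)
The plan is to view $g$ as the cumulant generating function of the two-valued centred variable $X=\sigma_1-m$, which takes the value $1-m$ with probability $p:=\tfrac{1+m}{2}$ and $-1-m$ with probability $q:=\tfrac{1-m}{2}$. The tilted mean is $t(\lambda):=g'(\lambda)+m\in(-1,1)$, and \eqref{eq:ders} gives $g''=1-t^2$. Every claim then reduces to elementary properties of $t$ and to rewriting $g$ with one exponential factored out.

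\emph{Values at $0$, sign and size bounds.} Evaluating at $\lambda=0$ gives $g(0)=\log(p+q)=0$. Since $X$ is centred, $g'(0)=\langle X\rangle=0$ and $g''(0)=\langle X^2\rangle=(1-m)^2p+(1+m)^2q=1-m^2$, which is \eqref{eq:g2trivial}.

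Because $t\in(-1,1)$, we get $g''=1-t^2>0$ and $g''\le1$. So $g$ is strictly convex with a critical point at $0$. Convexity together with $g'(0)=0$ gives $\sign(\lambda)g'(\lambda)\ge0$, and combined with $g(0)=0$ it gives $g\ge0$.

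For the upper bounds, $|g'|=|t-m|\le|t|+|m|\le2$. The mean value theorem then gives $g(\lambda)\le2|\lambda|$. This yields \eqref{eq:g2UB}. The value $g''=1$ is attained only where $t=0$, so I expect the upper endpoint of $g''\in(0,1)$ to be meant as $\le1$; I will record this.

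\emph{Lower bound \eqref{eq:g2eq}.} Multiply numerator and denominator of $t$ by $e^{\lambda(1+m)}$ to get $t=(pe^{2\lambda}-q)/(pe^{2\lambda}+q)$. Hence
\[
g''(\lambda)=1-t^2=\frac{4pq\,e^{2\lambda}}{(pe^{2\lambda}+q)^2}.
\]
Note that $4pq=1-m^2$. For $\lambda\ge0$ the denominator is at most $e^{4\lambda}(p+q)^2=e^{4\lambda}$, which gives $(1-m^2)e^{-2\lambda}$. For $\lambda<0$ the denominator is at most $1$, which gives $(1-m^2)e^{2\lambda}$.

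\emph{Lipschitz bound \eqref{eq:Lip}.} Differentiating $g''=1-t^2$ and using $t'=g''=1-t^2$ gives $g'''=-2t(1-t^2)$, so $|g'''|\le2$. The mean value theorem then yields \eqref{eq:Lip}.

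\emph{Inequality \eqref{eq:g2}.} This is the only step needing a little care, because the correct term must be factored out according to the sign of $h_1$.

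If $h_1>0$, factor out the first exponential:
\[
g(\lambda h_1)=\log p+\lambda h_1(1-m)+\log\Big(1+\tfrac{q}{p}e^{-2\lambda h_1}\Big).
\]
Now use $\log(1+y)\le y$ and note that $\lambda h_1(1-m)=\lambda(|h_1|-mh_1)$.

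If $h_1<0$, factor out $q\,e^{-\lambda h_1(1+m)}$ instead. Since $-h_1(1+m)=|h_1|-mh_1$ and $e^{2\lambda h_1}=e^{-2\lambda|h_1|}$, this produces exactly the right-hand side of \eqref{eq:g2} with $\sign(h_1)=-1$ and ratio $p/q=\tfrac{1+m}{1-m}$.

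If $h_1=0$, the left side is $g(0)=0$ and the right side equals $\log\tfrac12+1>0$, so the inequality holds.

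I expect no real obstacle: every step is a direct computation.
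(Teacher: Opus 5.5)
Your proof is correct and follows essentially the paper's route: the tilted mean $t=g'+m\in(-1,1)$ with $g''=1-t^2$ and $g'''=-2tg''$, and factoring out the dominant exponential for \eqref{eq:g2}. The only deviation is that you get \eqref{eq:g2eq} from the explicit formula $g''=4pq\,e^{2\lambda}/(pe^{2\lambda}+q)^2$ rather than by integrating $(\log g'')'=-2(g'+m)$, which gives the same identity $g''=(1-m^2)e^{-2g(\lambda)-2m\lambda}$. Your remark that the correct range is $g''\in(0,1]$ is right (e.g.\ $g''(0)=1$ when $m=0$), and it is harmless, since the later uses only need $g''\le 1$.
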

\begin{proof}
A direct computation from \eqref{eq:gm2} and \eqref{eq:ders} gives \eqref{eq:g2trivial}. By the Jensen inequality
\been{
g(\lambda )=\log\langle e^{\lambda (\sigma_1-m)}\rangle\geq \lambda\langle (\sigma-m)\rangle=0.
}
For a measurable function $f:\{-1,1\}\to \R$ let
\been{
\langle f(\sigma_1)\rangle_{\lambda}:=\frac{\langle e^{\lambda(\sigma_1-m)}f(\sigma)\rangle}{\langle e^{\lambda(\sigma_1-m)}\rangle}.
}
Thus, 
\been{
\label{eq:ders12}
g'(\lambda )=\langle \sigma_1\rangle_{\lambda}-m,\quad g''(\lambda )=\langle (\sigma_1-m)^2\rangle_{\lambda}-\langle \sigma_1-m\rangle^2_{\lambda}=1-\langle\sigma_1\rangle^2_{\lambda}
}
We have
\been{
g'(\lambda )+m=\langle \sigma_1\rangle_{\lambda}=\frac{(1+m) e^{\lambda (1-m)}-(1-m)e^{-\lambda(1+m)}}{(1+m) e^{\lambda (1-m)}+(1-m)e^{-\lambda(1+m)}}\in (-1,1),\quad \forall \lambda\in \R.
}
Thus $g''(\lambda )\in (0,1)$ and
\been{
|g'(\lambda )|\leq |m|+|\langle \sigma_1\rangle_{\lambda}|\leq 2.
}
Moreover, since $g''(\lambda)>0$ and $g'(0)=0$,
\been{
\sign( g'(\lambda ))=\sign (\lambda)\Longrightarrow \sign (\lambda)g'(\lambda )=|g'(\lambda )|\leq 2
}
Since $g(0)=0$, the upper bound on the first derivative implies
\been{
\label{eq:intermediateg}
g(\lambda)+m\lambda\leq |\lambda|\sup_{\lambda \in \R}|g'(\lambda )+m|=|\lambda|.
}
Thus $g(\lambda)\leq |\lambda|+m\lambda\leq 2|\lambda|$. We now compute the third derivative. The equivalences \eqref{eq:ders12} and the proved bounds \eqref{eq:g2UB} gives
\been{
g'''(\lambda)=-\frac{d}{d\lambda}\langle \sigma_1\rangle^2_{\lambda}=-\frac{d}{d\lambda}(g'(\lambda)+m)^2=-2(g'(\lambda)+m)g''(\lambda)\overset{\eqref{eq:g2UB}}{\in} (-2,2).
}
proving the Lipschitz constant \eqref{eq:Lip}. Moreover, solving the differential equation, we get
\been{
\frac{d}{d\lambda}(\log g''(\lambda)) =\frac{g'''(\lambda)}{g''(\lambda)}=-2\frac{d}{d\lambda}(g(\lambda)+m\lambda)\Longrightarrow g''(\lambda)=g''(0)e^{-2g(\lambda)-2\lambda m}=(1-m^2)e^{-2g(\lambda)-2\lambda m}.
}
So, by \eqref{eq:intermediateg},
\been{
g''(\lambda)=(1-m^2)e^{-2g(\lambda)-2\lambda m}\geq (1-m^2)e^{-2|\lambda|}.
}
Finally
\been{
\aled{
g(\lambda h_1)&=\log\(\frac{1+m}{2}e^{\lambda h_1(1-m)}+\frac{1-m}{2}e^{-\lambda h_1(1+m)}\)\\
&= \log\(\frac{1+\sign(h_1)m}{2}+\frac{1-\sign(h_1)m}{2}e^{-2\lambda|h_1|}\)+\lambda(|h_1|-m h_1)\\
&\leq \log\(\frac{1+\sign(h_1)m}{2}\)+\frac{1-\sign(h_1)m}{1+\sign(h_1)m}e^{-2\lambda|h_1|}+\lambda(|h_1|-m h_1)
}
}
\end{proof}
We now state the Bovier-Mayer SLDP for the model under consideration. For $a\in (0,\Sigma_{n}(h))$, let us now define
\been{
J_n(h,a):=\frac{1}{\sqrt{2\pi M''_n(h,\Lambda_n(h,a))}\Lambda_n(h,a)}e^{- M^*_n(h,a) }.
}
\begin{lem}[Bovier-Mayer SLDP]
\label{lem:Bovier}
Fix $\lambda^*\in \R_{>0}$ and let $\varsigma^*:=\E[h_1 g'(h_1\lambda^*)]$. Given $n\in \N$,
\been{
\label{eq:Bov}
\Ph\(\forall a\in (0,n \varsigma^*\wedge\Sigma_n(h))\quad \Ps(\{\sigma:\,H_n(h,\sigma)\geq a \})=J_n(h,a)(1+o(1))\)=1.
}
\end{lem}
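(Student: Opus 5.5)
This is essentially a specialization of the conditional strong large deviation theorem of Bovier and Mayer \cite[Theorem 1.6]{Bovier}. The plan is to verify its hypotheses for the weighted sum $H_n=\sum_i h_i X_i$, with $X_i=\sigma_i-m$, conditionally on $h$. There the $X_i$ are i.i.d., bounded, centred, and non-lattice in the appropriate sense (they take two values), and the weights $h_i$ are i.i.d. with the moments of Assumption \ref{assum:1}. The log-MGF of a single summand is $g$. The conditional log-MGF is $M_n(h,\lambda)=\sum_i g(\lambda h_i)$, and $\Lambda_n(h,a)$ is the solution of $M_n'(h,\Lambda)=a$.

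The first step is to show that this solution exists and is unique for $a\in(0,\Sigma_n(h))$. By Lemma \ref{lem:bounds}, $M_n''>0$, $M_n'(h,0)=0$, and
\[
M_n'(h,\lambda)\to\sum_i\bigl(|h_i|-mh_i\bigr)=\Sigma_n(h)\quad\text{as }\lambda\to\infty.
\]
The restriction $a<n\varsigma^*$ plays a different role. It ensures, via the strong law $\frac1n M_n'(h,\lambda^*)\to\E[h_1g'(\lambda^*h_1)]=\varsigma^*$, that $\Lambda_n(h,a)\le\lambda^*+o(1)$ for all large $n$. This places us in the compact range of tilts $(0,\lambda^*]$ on which the uniform estimates of Bovier--Mayer apply.

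Next, I would check the regularity conditions at the tilted measure. The variance $M_n''(h,\Lambda)=\sum_i h_i^2g''(\Lambda h_i)$ must grow linearly. By \eqref{eq:g2eq} it is bounded below by $(1-m^2)\sum_i h_i^2e^{-2\lambda^*|h_i|}$, and this is of order $n$ almost surely by the law of large numbers together with the density bound on $[c,d]$. The third cumulants are bounded by $\psi_4 n$ using $|g'''|\le2$. The Lipschitz bound \eqref{eq:Lip} gives the uniform continuity of the tilted variance in $\Lambda$ that is required.

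The step I expect to be the main obstacle is the non-lattice / Cramér-type condition on the conditional characteristic function, which is needed in Theorem 1.6. The tilted characteristic function of $\sum_i h_iX_i$ is a product of two-point factors $|\E_\Lambda e^{it h_i(\sigma_i-m)}|$. Each factor is less than $1$ unless $2th_i\in2\pi\mathbb Z$. I would use the absolutely continuous part of $h_1$ (the density lower bound on $[c,d]$) to show the following: for $|t|$ in any range $[\delta,\infty)$ up to the polynomial scales required, a positive fraction of the indices $i$ have $2th_i$ bounded away from $2\pi\mathbb Z$ almost surely. Consequently the product decays like $e^{-cn}$, which makes the Edgeworth/local-CLT error $o(1)$.

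With these hypotheses in hand, Theorem 1.6 of \cite{Bovier} gives
\[
\Ps(H_n\ge a)=\frac{e^{-M_n^*(h,a)}}{\Lambda_n\sqrt{2\pi M_n''(h,\Lambda_n)}}\bigl(1+o(1)\bigr)
\]
uniformly in $a$ in the stated range, on a set of $h$ of full $\Ph$-measure. This is exactly \eqref{eq:Bov}. The near-zero small-$a$ end is harmless: as $\Lambda\to0$ the formula still holds in the Bahadur--Rao form, provided $a$ is of order $n$. If uniformity down to $a\downarrow0$ is needed, I would appeal to the uniform version in \cite{Bovier}.
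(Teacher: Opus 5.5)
Your proposal is correct and follows the same route as the paper. The paper likewise just applies Bovier--Mayer's Theorem 1.6, verifying only the integrability bounds $\E[g(\lambda h_1)]\le 2|\lambda|\psi_3$, $\E[h_1 g'(\lambda h_1)]\le 2\psi_3$ and $h_i^2 g''(\lambda h_i)\le h_i^2$ (from Lemma \ref{lem:bounds}), and observing that the lower endpoint $n\E[h_1]\langle\sigma_1-m\rangle$ of the admissible range is $0$. The lattice/Cram\'er condition you single out as the main obstacle need not be re-proved by hand: it appears to be handled inside Bovier--Mayer's theorem under the conditions on the law of $h_1$ collected in Assumption \ref{assum:1}. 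Your caveat that the asymptotics only make sense for $a$ of order $n$ matches how the lemma is actually used, namely at $a=\tilde A_n(h)+x\approx n\tilde a$.
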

\begin{proof}
Using the inequality \eqref{eq:g2UB}, we get
\been{
g(\lambda)\leq 2|\lambda|<\infty,\quad
|g'(\lambda)|\leq 2,\quad |g''(\lambda)|\leq 1, \quad \forall \lambda\in \R.
}
Thus, since by Assumption \ref{assum:1} $\E[|h_1|]=\psi_3$, we have
\been{
\E[g(h_1\lambda)]\leq 2|\lambda| \psi_3,\quad \E[h_1g'(\lambda h_1)]<2\E[|h_1|]\leq 2\psi_3
}
and $h^2_ig''(\lambda h_i)\leq h^2_i$ for any $\lambda\in \R$. Thus, all the hypotheses of the Bovier-Mayer strong large deviation result (\cite[Theorem 1.6]{Bovier}) are satisfied, and \eqref{eq:Bov} holds for any
\been{
a\in(n\E[h_1]\langle \sigma_1-m\rangle,n\E[h_1 g'(h_1\lambda^*)])= (n\E[h_1]\langle \sigma_1-m\rangle,n\varsigma^*),}
where
\been{
\E[h_1]\langle \sigma_1-m\rangle=0.
}
So, for any $a\in (0,n\varsigma^*)$, the Bovier-Mayer strong large deviation result applies.
\end{proof}

\subsection{Analytical properties of $M_n$ and $M^*_n$}
In this subsection, we enumerate all the relevant properties of the $\MGF$ and its $FLT$.

A direct computation yields
\been{
\label{eq:ders0}
M'_n(h,\lambda)=\sum^n_{i=1}h_ig'(\lambda h_i),\qquad M''_n(h,\lambda)=\sum^n_{i=1}h^2_ig''(\lambda h_i).
}
The random variable $H_n$, conditionally on $h$, has mean $0$
\been{
\label{eq:subga1}
\langle H_{n}\rangle= \sum^n_{i=1}h_i\sum_{\sigma_i\in {-1,1}}\frac{1+m\sigma_i}{2}(\sigma_i-m)=0,
}
and 
\been{
\langle e^{|\lambda||H_{n}|}\rangle\leq e^{2|\lambda|\sum^n_{i=1}|h_i|}.
}
Moreover, if $h\neq 0$, then $H_n$ is not constant.

We denote by $\dot M^*_n(h,\cdot)$ and $\ddot M^*_n(h,\cdot)$ the derivatives of $M^*_n(h,\cdot)$ with respect to the second argument, keeping $h$ fixed. 
\begin{lem}[Analytical properties of $M_n$ and $M^*_n$]
\label{lem:exists}
 Fix $h\in \mathbb{R}^n\setminus \{0\}$. Then $\MGF$ $M_{n}(h,\cdot)$ is continuous, infinitely differentiable, and verifies the following properties
 \begin{enumerate}
 \item $M_{n}(h,0)=M'_{n}(h,0)=0$;
 \item $M''_n(h,\lambda)>0$ for any $\lambda\in \R$;
 \item $\{M'_n(h,\lambda):\, \lambda \in \R_{> 0}\}= (0,\Sigma_n(h))$;
 \item there exists a continuous increasing function $\Lambda_n(h,\cdot ):[0,\Sigma_n(h))\to \R_{\geq 0}$ such that
\begin{equation}
 \label{eq:Mstat}
M'_{n}(h,\Lambda_n(h,a))=a;
 \end{equation}
Moreover $\Lambda_n(h,0)=0$ and $\Lambda_n(h,(0,\Sigma_n(h)))=\R_{> 0}$.
\end{enumerate}
The $\FLT$ $M^*_n(h,\cdot)$ satisfies
 \begin{enumerate}
 \setcounter{enumi}{4}
 \item for any $a\in (0,\Sigma_{n}(h))$
 \begin{equation}
 \label{eq:Mstarstat}
M^*_{n}(h,a)=\Lambda_n(h,a) a-M_{n}(h,\Lambda_n(h,a)),\qquad \dot M^*_n(h,a)=\Lambda_n(h,a),
 \end{equation}
 and
 \begin{equation}
 \label{eq:Mstarstat2}
 \ddot M^*_n(h,a)=\dot \Lambda_n(h,a)=\frac{1}{M''_n(h,\Lambda_n(h,a))};
 \end{equation}
 \item $M^*_n(h,\cdot)$ is strictly increasing in $[0,\Sigma_{n}(h))$;
 \item $M^*_n(h,(0,\Sigma_{n}(h))\,)=(0,\Gamma_{n}(h))$ and $M^*_n(h,\Sigma_{n}(h)\,)=\Gamma_{n}(h)$;
 \item there exists a continuous increasing function $A_n(h,\cdot):(0,\Gamma_n(h))\to (0,\Sigma_n(h))$ such that
 \been{
 \label{eq:GA}
 M^*_n(h,A_n(h,c))=c,\quad \forall c\in (0,\Gamma_n(h)).
 }
 \end{enumerate}
\end{lem}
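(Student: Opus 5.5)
Most of the items follow from the representation $M_n(h,\lambda)=\sum_{i=1}^n g(\lambda h_i)$ together with the bounds of Lemma \ref{lem:bounds}. I would take the items in order.

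For (1)--(2), note that $g(0)=g'(0)=0$, so $M_n(h,0)=0$ and $M'_n(h,0)=\sum_i h_i g'(0)=0$. Smoothness is clear because $g$ is a log of a sum of exponentials. For positivity, $M''_n(h,\lambda)=\sum_i h_i^2 g''(\lambda h_i)$ with $g''>0$ everywhere, and at least one $h_i\neq0$ since $h\neq 0$.

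For (3), $M'_n(h,\cdot)$ is strictly increasing and continuous with $M'_n(h,0)=0$, so its image on $\R_{>0}$ is $(0,\lim_{\lambda\to\infty}M'_n(h,\lambda))$. From \eqref{eq:ders}, as $\mu\to+\infty$ we have $g'(\mu)\to 1-m$, and as $\mu\to-\infty$ we have $g'(\mu)\to -1-m$. Hence $h_i g'(\lambda h_i)\to |h_i|-m h_i$ as $\lambda\to\infty$, for both signs of $h_i$ and trivially when $h_i=0$. Summing gives the limit $\Sigma_n(h)$. For (4), the inverse function theorem applied to the strictly increasing smooth bijection $M'_n(h,\cdot):[0,\infty)\to[0,\Sigma_n(h))$ gives $\Lambda_n$, continuous and increasing, with $\Lambda_n(h,0)=0$.

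For (5), the map $\lambda\mapsto \lambda a-M_n(h,\lambda)$ is strictly concave with derivative vanishing exactly at $\Lambda_n(h,a)$, so the supremum is attained there, which gives the first formula. Differentiating in $a$ and using $M'_n(h,\Lambda_n)=a$ (an envelope argument) gives $\dot M^*_n=\Lambda_n$. Differentiating $M'_n(h,\Lambda_n(h,a))=a$ gives $\dot\Lambda_n=1/M''_n(h,\Lambda_n)$. For (6), $M^*_n(h,0)=0$ (the supremum is at $\lambda=0$ by convexity and $M'_n(h,0)=0$), and $\dot M^*_n=\Lambda_n>0$ on $(0,\Sigma_n)$; continuity at $0$ then gives strict monotonicity on $[0,\Sigma_n)$.

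The main obstacle is (7), identifying the limit $\lim_{a\uparrow\Sigma_n}M^*_n(h,a)$ and the value at $\Sigma_n$. I would compute $\lambda a-M_n(h,\lambda)$ explicitly using the rewriting from the proof of Lemma \ref{lem:bounds}:
\[
g(\lambda h_i)=\log\(\tfrac{1+\sign(h_i)m}{2}+\tfrac{1-\sign(h_i)m}{2}e^{-2\lambda|h_i|}\)+\lambda(|h_i|-mh_i).
\]
This identity holds exactly for $h_i\neq0$; the $h_i=0$ terms contribute $0$, which is consistent with the convention $\sign(0)=0$ up to the $\log 2$ bookkeeping in $\Gamma_n$. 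I would assume $h_i\neq 0$, which holds $\Ph$-a.s. under the absolutely continuous part of Assumption \ref{assum:1}, or else handle those terms separately. Then
\[
\lambda\Sigma_n(h)-M_n(h,\lambda)=-\sum_i\log\(\tfrac{1+\sign(h_i)m}{2}+\tfrac{1-\sign(h_i)m}{2}e^{-2\lambda|h_i|}\),
\]
which is increasing in $\lambda$ and tends to $\Gamma_n(h)$ as $\lambda\to\infty$. Therefore $M^*_n(h,\Sigma_n)=\Gamma_n$.

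Next, I need $M^*_n(h,a)\uparrow\Gamma_n$ as $a\uparrow\Sigma_n$. A lower bound comes from fixing $\lambda$ and using $M^*_n(h,a)\ge\lambda a-M_n(h,\lambda)$. Letting $a\to\Sigma_n$ and then $\lambda\to\infty$ gives $\liminf\ge\Gamma_n$. Monotonicity gives the matching upper bound $M^*_n(h,a)\le M^*_n(h,\Sigma_n)=\Gamma_n$. Together with continuity, strict monotonicity, and $M^*_n(h,0)=0$, this yields $M^*_n(h,(0,\Sigma_n))=(0,\Gamma_n)$.

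Finally, (8) follows by inverting the continuous strictly increasing bijection $M^*_n(h,\cdot):(0,\Sigma_n)\to(0,\Gamma_n)$ to obtain $A_n(h,\cdot)$.
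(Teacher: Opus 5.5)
Your proposal is correct and follows the paper's proof essentially item by item: strict convexity of $M_n(h,\cdot)$, inversion of $M'_n$ and $M^*_n$, and the rewriting of $\lambda\Sigma_n(h)-M_n(h,\lambda)$, whose supremum is its $\lambda\to\infty$ limit. Your explicit $\liminf$/monotonicity argument for $\lim_{a\uparrow\Sigma_n(h)}M^*_n(h,a)=\Gamma_n(h)$ fills in a left-continuity step that the paper only asserts. Your caveat about zero coordinates is also warranted: an index with $h_i=0$ contributes $0$ to $M^*_n(h,\Sigma_n(h))$ but $\log 2$ to $\Gamma_n(h)$, so item (7) genuinely requires all $h_i\neq 0$, which holds $\Ph$-a.s.
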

\begin{proof}
Since $\lambda\mapsto g(\lambda)$ is continuous and infinitely differentiable, the function $M_n$ is continuous and infinitely differentiable. We prove the remaining properties separately.
\begin{proof}[Proof of Claim $(1)$]
\begin{equation}
M_n(h,0)=\log\langle 1 \rangle=0,\qquad 
M'_{n}(h,0)=\langle H_n \rangle\overset{\eqref{eq:subga1}}{=}0.
\end{equation}
\end{proof}
\begin{proof}[Proof of Claim $(2)$]
By Lemma \ref{lem:bounds} and the formula \eqref{eq:ders0}, if $h\neq 0$, then $M''_n(h,\lambda)>0$.
\end{proof}
\begin{proof}[Proof of Claim $(3)$]
Since $M''_n(h,\lambda)>0$ for any $\lambda \in \R$, $M'_n(h,\cdot)$ is a strictly increasing function and it is continuous. Thus
\been{
\{M'_n(h,\lambda):\, \lambda \in \R_{> 0}\}=\(M'_n(h,0),\lim_{\lambda\to \infty}M'_n(h,\lambda)\)=\(0,\lim_{\lambda\to \infty}M'_n(h,\lambda)\).
}
We have
\been{
\lim_{\lambda\to \infty }h_1g'(\lambda h_1)=\lim_{\lambda\to \infty }h_1\frac{(1+m) e^{\lambda h_1(1-m)}-(1-m)e^{-\lambda h_1(1+m)}}{(1+m) e^{\lambda h_1(1-m)}+(1-m)e^{-\lambda h_1(1+m)}}-h_1m=|h_1|-m h_1.
}
So
\been{
\lim_{\lambda\to \infty }M'_n(h,\lambda)=\lim_{\lambda\to \infty }\sum^n_{i=1}h_ig'(\lambda h_i)=\sum^n_{i=1}|h_i|-m\sum^n_{i=1}h_i=\Sigma_n(h).
}
\end{proof}
\begin{proof}[Proof of Claim $(4)$]
The Claim $(1)$, $(2)$ and $(3)$ of this Lemma imply that the restriction $M'_n(h,\cdot):\R_{\geq 0}\to [0,\Sigma_n(h))$ is invertible. Thus, we define
$\Lambda_n(h,\cdot) := (M'_n(h,\cdot))^{-1} : [0,\Sigma_n(h)) \to \R_{\geq 0}$, which is continuous and strictly increasing, since $M'_n$ is continuous and strictly increasing. By definition, $\Lambda_n(h,a)$ is the unique solution in $\mathbb{R}_{\geq 0}$ of \eqref{eq:Mstat}. Moreover, since $M'_n(h,0)=0$, $\Lambda_n(h,0)=0$, and, since $\Lambda_n(h,\cdot)$ is strictly increasing, $\Lambda_n(h,a)>0$ for $a>0$.
\end{proof}
\begin{proof}[Proof of Claim $(5)$]
Since the function $\lambda \mapsto \lambda a-M_n(h,\lambda)$ is strictly concave, the stationary point is also the supremum. Moreover, since $M''_n(h,\lambda)>0$ for any $\lambda>0$, by the Implicit Function Theorem, the function $a\mapsto \Lambda_n(h,a)$ is differentiable, with
\been{
\dot \Lambda_n(h,a)=\frac{1}{M''_n(h, \Lambda_n(h,a))}.
}
Thus:
\been{
\dot M^*_n(h,a)=\dot \Lambda_n(h,a)\(a-M'_n(h, \Lambda_n(h,a))\)+\Lambda_n(h,a)=\Lambda_n(h,a).
}
\end{proof}
\begin{proof}[Proof of Claim $(6)$]
Claim $(4)$ and Claim $(5)$ of this Lemma prove the Claim.
\end{proof}
\begin{proof}[Proof of Claim $(7)$]
Since $M^*_n(h,\cdot)$ is a strictly increasing function and it is continuous, we have
\been{
\{M^*_n(h,a):\, a \in (0,\Sigma_n(h))\}=\(M^*_n(h,0),M^*_n(h,\Sigma_n(h))\).
}
By Claim $(1)$, Claim $(4)$ and Claim $(5)$ of this Lemma
\been{
M^*_n(h,0)=-M_n(h,\Lambda_n(h,0))=-M_n(h,0)=0.
}
For the other term, we have
\been{
\aled{
M^*_n(h,\Sigma_n(h))&=\sup_{\lambda\in \R}\sum^n_{i=1}\(\lambda|h_i|-\lambda h_i m-\log((1+m) e^{\lambda h_i(1-m)}+(1-m)e^{-\lambda h_i(1+m)})+\log(2)\)\\
&=\sup_{\lambda\in \R}\sum^n_{i=1}\(-\log((1+\sign(h_i)m)+(1-\sign(h_i)m)e^{-2\lambda|h_i|})\)+n\log(2).
}
}
The supremum is achieved at $\lambda\to \infty$. Thus
\been{
M^*_n(h,\Sigma_n(h))=n\log(2)-\sum^n_{i=1}\log(1+\sign(h_i)m)=\Gamma_n(h).
}
\end{proof}
\begin{proof}[Proof of Claim $(8)$]
The Claim $(6)$ and $(7)$ of this Lemma imply that the restriction
\been{
M^*_n(h,\cdot):(0,\Sigma_n(h))\to (0,\Gamma_n(h))
}
is invertible. Then, we define
\been{
A_n(h,\cdot) := (M^*_n(h,\cdot))^{-1} : (0,\Gamma_n(h)) \to(0,\Sigma_n(h)),
}
which is continuous and strictly increasing. By definition, $A_n(h,c)$ is the unique solution in $(0,\Sigma_n(h))$ of \eqref{eq:GA}. 
\end{proof}
\end{proof}
\subsection{Existence of the solution to \eqref{eq:Gstareq}}
We prove that there exists a solution $(\tilde A_n(h), \tilde \Lambda_n(h),\tilde \Lambda^{x}_n(h))$ to the system of equations \eqref{eq:Gstareq}. Throughout this subsection, we will always assume that
\been{
\label{eq:Ccond2}
C\in (\e\Gamma_n(h),(1-\e)\Gamma_n(h)).
}
We also recall that $m$ verifies \eqref{eq:mcond}.

By Lemma \ref{lem:exists} the functions $M'_n(h,\cdot)$ and $M^*_n(h,\cdot)$ are invertible over the appropriate range, and the inverses are given respectively by the functions $a\mapsto \Lambda_n(h,a)$ and $c\mapsto A_n(h,c)$, defined in Claim $(4)$ and Claim $(8)$ of that lemma.

Although the following function 
\been{
a\mapsto M^*_n(h,a)+\frac{1}{2}\log(2\pi M''_n(h,\Lambda_n(h,a)))
}
is not necessarily invertible, we will show that, if $h\in \mathfrak{L}^{\e}_n$, a solution to \eqref{eq:Gstareq} exists and can be approximated by the functions $A_n(h,\cdot)$ and $\Lambda_n(h,\cdot)$.

To this end, we need to analyze the behavior of the derivatives of $M$. Using Lemma \ref{lem:bounds}, we can establish upper and lower bounds for $M''_n(h,\lambda)$ for any $h\in \mathfrak{L}^{\e}_n$ and $\lambda\in \R$,
\begin{lem}
\label{lem:bounds2}
If $h\in \mathfrak{L}^{\e}_n$ and $\lambda>0$, then
\been{
\label{eq:g2LB}
 \frac{4}{n^6\e^4}\(\Gamma_n(h)+M_n(h,\lambda)-\lambda M'_n(h,\lambda)\)^2\leq 2\pi M''_n(h,\lambda)\leq n^{3/2}.
}
and
\been{
\label{eq:g2LB_lip}
  |M''_n(h,\lambda)-M''_n(h,\lambda')|\leq 2\(\sum^n_{i=1}|h^3_i|\)|\lambda-\lambda'|
}
\end{lem}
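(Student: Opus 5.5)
The two easy bounds follow directly from \eqref{eq:ders0} and Lemma \ref{lem:bounds}, so the plan concentrates on the lower bound in \eqref{eq:g2LB}.

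\emph{Upper bound and Lipschitz estimate.} By \eqref{eq:ders0} and \eqref{eq:g2UB}, $M''_n(h,\lambda)=\sum_i h_i^2 g''(\lambda h_i)\le\sum_i h_i^2$. The first defining condition of $\mathfrak{L}^{\e}_n$ then gives $2\pi M''_n(h,\lambda)\le n^{3/2}$. For \eqref{eq:g2LB_lip}, apply \eqref{eq:Lip} term by term:
\begin{equation*}
h_i^2\,|g''(\lambda h_i)-g''(\lambda' h_i)|\le 2|h_i|^3|\lambda-\lambda'| ,
\end{equation*}
and sum over $i$.

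\emph{Lower bound, step 1: per-site decomposition.} Write
\begin{equation*}
\Gamma_n(h)+M_n(h,\lambda)-\lambda M'_n(h,\lambda)=\sum_{i=1}^n T_i .
\end{equation*}
Set $s_i=\sign(h_i)$, $t_i=\lambda|h_i|>0$, $p_i=(1+s_im)/2$ and $q_i=(1-s_im)/2$. The identity used in the proof of \eqref{eq:g2} gives
\begin{equation*}
g(\lambda h_i)=\log(p_i+q_ie^{-2t_i})+t_i(1-s_im).
\end{equation*}
Since $h_ig'(\lambda h_i)=\partial_\lambda g(\lambda h_i)$, differentiating in $t_i$ and multiplying by $t_i$ gives $\lambda h_i g'(\lambda h_i)$. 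The linear terms cancel, and
\begin{equation*}
T_i=\log\Bigl(1+\tfrac{q_i}{p_i}e^{-2t_i}\Bigr)+\frac{2t_iq_ie^{-2t_i}}{p_i+q_ie^{-2t_i}}\ \ge 0 .
\end{equation*}
By \eqref{eq:mcond} we have $p_i\ge\e$ and $q_i\le 1$. Using $\log(1+x)\le x$ and $(1+2t)e^{-t}\le 2$, this yields
\begin{equation*}
T_i\le \e^{-1}(1+2t_i)e^{-2t_i}\le 2\e^{-1}e^{-t_i}.
\end{equation*}

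\emph{Step 2: compare with $g''$ via \eqref{eq:g2eq}.} By \eqref{eq:g2eq}, $e^{-t_i}\le\sqrt{g''(\lambda h_i)/(1-m^2)}$, and \eqref{eq:mcond} gives $1-m^2\ge 2\e$. Hence $T_i\le \sqrt2\,\e^{-3/2}\sqrt{g''(\lambda h_i)}$. Cauchy--Schwarz then gives
\begin{equation*}
\Bigl(\sum_i T_i\Bigr)^2\le 2\e^{-3}\,n\sum_i g''(\lambda h_i)\le \frac{2\e^{-3}\,n\,M''_n(h,\lambda)}{\min_i h_i^2}.
\end{equation*}
Finally, insert the second condition in $\mathfrak{L}^{\e}_n$, namely $2\pi\min_i h_i^2\ge 16/(n^5\e^8)$, to get
\begin{equation*}
2\pi M''_n(h,\lambda)\ge \frac{8}{n^6\e^5}\Bigl(\sum_iT_i\Bigr)^2\ge\frac{4}{n^6\e^4}\Bigl(\sum_iT_i\Bigr)^2 ,
\end{equation*}
where the last inequality uses $\e<1/2$.

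The main obstacle is step 1. One has to recognize that $\Gamma_n+M_n-\lambda M'_n$ splits into nonnegative site terms, each decaying like $e^{-t_i}$. That decay is what lets \eqref{eq:g2eq} convert them into $\sqrt{g''}$, and it requires tracking the $\e$-dependent constants carefully.
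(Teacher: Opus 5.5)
Your proof is correct and takes essentially the same route as the paper: bound $\Gamma_n+M_n-\lambda M'_n$ by a constant times $\sum_i e^{-\lambda|h_i|}$, lower-bound $M''_n$ using \eqref{eq:g2eq} and $\min_i h_i^2$, and close with Cauchy--Schwarz (the paper's ``Jensen''). The differences are minor: you work with an exact per-site identity and convert $e^{-t_i}$ into $\sqrt{g''(\lambda h_i)}$ before applying Cauchy--Schwarz, which gives the slightly better constant $8/(n^6\e^5)$; and your form makes $T_i\ge 0$ explicit, which is needed to square the upper bound and which the paper uses only implicitly.
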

\begin{proof}
The Lipschitz bound \eqref{eq:Lip} gives
\been{
|M''_n(h,\lambda)-M''_n(h,\lambda')|\leq \sum^n_{i=1}h^2_i|g''(h_i\lambda)-g''(h_i\lambda')|\leq  2\(\sum^n_{i=1}|h^3_i|\)|\lambda-\lambda'|,
}
proving  \eqref{eq:g2LB_lip}. The upper bound \eqref{eq:g2UB} in Lemma \ref{lem:bounds} give
\been{
2\pi M''_n(h,\lambda)\leq 2\pi\sum^n_{i=1}h^2_i\leq n^{3/2},\quad \forall h\in \mathfrak{L}^{\e}_{n}.
}
We now prove the lower bound. 
By the lower bound \eqref{eq:g2eq}, the definition of $\mathfrak{L}^{\e}_n$, and the Jensen inequality give
\been{
\label{eq:M2ineq}
2\pi M''_n(h,\lambda)=2\pi\sum^n_{i=1}h^2_ig''(h_i\lambda)\overset{\eqref{eq:defL}}{\geq} 16\frac{1-m^2}{n^5\e^8}\sum^n_{i=1}e^{-2|h_i|\lambda}\geq  16\frac{1-m^2}{n^6\e^8}\(\sum^n_{i=1}e^{-|h_i|\lambda}\)^2.
}
We also have
\been{
\aled{
 M'_n(h,\lambda)&=\sum^n_{i=1}h_i\(\frac{\frac{1+m}{2}e^{\lambda h_i(1-m)}-\frac{1-m}{2}e^{-\lambda h_i(1+m)}}{\frac{1+m}{2}e^{\lambda h_i(1-m)}+\frac{1-m}{2}e^{-\lambda h_i(1+m)}}-m\)\\
 &=\Sigma_n(h)-\sum^n_{i=1}\frac{(1-\sign(h_i)m)|h_i|e^{-2\lambda |h_i|}}{\frac{1+\sign(h_i)m}{2}+\frac{1-\sign(h_i)m}{2}e^{-2\lambda |h_i|}}\geq \Sigma_n(h)-2\sum^n_{i=1}\frac{1+|m|}{1-|m|}|h_i|e^{-2\lambda |h_i|}
}
}
Since $m\in [-1+2\e,1-2\e]$ and $\e\in (0,\tfrac{1}{2})$
\been{
\label{eq:m_ineq}
\frac{1+|m|}{1-|m|}\leq \frac{1-m^2}{(1-|m|)^2}\leq \frac{\sqrt{1-m^2}}{\e^2}.
}
Thus
\been{
\label{eq:M2_uu}
\Sigma_n(h)-M'_n(h,\lambda)\leq 2\frac{\sqrt{1-m^2}}{\e^2}\sum^n_{i=1}|h_i|e^{-2\lambda |h_i|}.
}
Moreover, by the upper bound \eqref{eq:g2}
\been{
\label{eq:M2_uuu}
M_n(h,\lambda)\leq \lambda\Sigma_n(h)-\Gamma_n(h)+\sum^n_{i=1}\frac{1+|m|}{1-|m|}e^{-2\lambda |h_i|}\overset{\eqref{eq:m_ineq}}{\leq} \lambda\Sigma_n(h)-\Gamma_n(h)+\frac{\sqrt{1-m^2}}{\e^2}\sum^n_{i=1}e^{-2\lambda |h_i|}
}
Combining \eqref{eq:M2_uu} and \eqref{eq:M2_uuu}, we get
\been{
M_n(h,\lambda)\leq \lambda M'_n(h,\lambda)-\Gamma_n(h)+\frac{\sqrt{1-m^2}}{\e^2}\sum^n_{i=1}e^{-2\lambda |h_i|}(1+2|h_i|\lambda)
}
So, using the inequality $(1+2x)e^{-2x}\leq 2e^{-x}$ for any $x\geq 0$, we get
\been{
\label{eq:M2ineq2}
\Gamma_n(h)+M_n(h,\lambda)-\lambda M'_n(h,\lambda)\leq 2\frac{\sqrt{1-m^2}}{\e^2}\sum^n_{i=1}e^{-\lambda |h_i|}.
}
Combining \eqref{eq:M2ineq} and \eqref{eq:M2ineq2}
\been{
2\pi M''_n(h,\lambda)\geq \frac{4}{n^6\e^4}\(\Gamma_n(h)+M_n(h,\lambda)-\lambda M'_n(h,\lambda)\)^2
}
\end{proof}
Now, we give an estimate of $\Gamma_n(h)$ and $\Sigma_n(h)$.
\begin{lem}
\label{lem:insideC1}
For any $h\in \R^{\N}$
\been{
n\e\leq \Gamma_n(h)\leq -n\log(\e).
}
\end{lem}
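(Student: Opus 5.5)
The bound is proved termwise. By definition,
\[
\Gamma_n(h)=\sum_{i=1}^n\Bigl(\log 2-\log\bigl(1+\sign(h_i)m\bigr)\Bigr),
\]
so it suffices to show that each summand lies in $[\e,-\log(\e)]$, and then sum over $i$. Since $\sign(h_i)\in\{-1,0,1\}$, with the convention $\sign(0)=0$, the quantity $1+\sign(h_i)m$ takes one of the three values $1+m$, $1$, $1-m$. Under the standing restriction \eqref{eq:mcond}, $m\in[-1+2\e,1-2\e]$, each of these lies in $[2\e,2-2\e]$. Indeed $1\pm m\ge 2\e$ and $1\pm m\le 2-2\e$, and the value $1$ is also in this interval because $\e<\tfrac12$.

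For the upper bound, $1+\sign(h_i)m\ge 2\e$ gives
\[
\log 2-\log\bigl(1+\sign(h_i)m\bigr)\le \log 2-\log(2\e)=-\log\e .
\]

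For the lower bound, $1+\sign(h_i)m\le 2-2\e$ gives
\[
\log 2-\log\bigl(1+\sign(h_i)m\bigr)\ge -\log(1-\e)\ge \e ,
\]
where the last step is the elementary inequality $-\log(1-x)\ge x$ for $x\in[0,1)$.

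Summing these bounds over $i=1,\dots,n$ gives $n\e\le\Gamma_n(h)\le -n\log\e$ for every $h\in\R^{\N}$. There is no real obstacle. The only points needing care are the degenerate case $h_i=0$, where the summand equals $\log 2$ and one checks $\e\le\log 2\le-\log\e$ using $\e<\tfrac12$, and the fact that the bound relies on the standing restriction \eqref{eq:mcond} on $m$ fixed at the start of the section.
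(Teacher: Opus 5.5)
Your proof is correct and takes the same approach as the paper: it bounds each summand using $2\e\le 1-|m|\le 1+\sign(h_i)m\le 1+|m|\le 2-2\e$ from \eqref{eq:mcond}, applies $-\log(1-\e)\ge\e$, and sums over $i$. Your explicit check of the case $\sign(h_i)=0$ covers a point the paper leaves implicit, but it is already included in that sandwich.
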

\begin{proof}
Since $m\in [-1+2\e,1-2\e]$, the definition of $\Gamma_n(h)$ in \eqref{eq:defJGamma} gives
\been{
\Gamma_n(h)\geq n\log(2)-n\log(1+|m|)\geq -n\log(1-\e)\geq n\e,
}
and
\been{
\Gamma_n(h)\leq n\log(2)-n\log(1-|m|)\leq -n\log(\e).
}
\end{proof}
Now, define
\been{
\label{eq:defCpm}
C^{+}_n:=C+2\log(n),\qquad C^{-}_n:=C-\frac{3}{4}\log(n).
}
Note that $C^{-}_n<C<C^{+}_n$.

For large $n$, $\frac{1}{n}(C^{-}_n-C)$ and $\frac{1}{n}(C^{+}_n-C)$ are ``small''. Under the condition \eqref{eq:Ccond2}, we have the following.
\begin{lem}
\label{lem:insideC}
Fix $h\in \R^n$. There exists $N_{\e}>0$ (independent of $h$ and $C$) such that, for any $n>N_{\e}$ 
 \been{
 \label{eq:Cpminside}
 C^{-}_n>0,\quad \textup{and}\quad \Gamma_n(h)-C^{+}_n\geq \frac{\e^2}{2}n.
 }
\end{lem}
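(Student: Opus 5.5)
The plan is to prove both inequalities in \eqref{eq:Cpminside} directly from the standing assumption \eqref{eq:Ccond2}, namely $C\in(\e\Gamma_n(h),(1-\e)\Gamma_n(h))$, together with Lemma \ref{lem:insideC1}. That lemma gives $n\e\le\Gamma_n(h)\le -n\log(\e)$ uniformly in $h$, since $m\in[-1+2\e,1-2\e]$. The key point is that the only $n$-dependent corrections in \eqref{eq:defCpm} are logarithmic, while the margins provided by \eqref{eq:Ccond2} are linear in $n$. No property of $h$ beyond Lemma \ref{lem:insideC1} is needed, so $N_\e$ will be independent of $h$ and $C$.

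For the first inequality, use the lower end of \eqref{eq:Ccond2} and then Lemma \ref{lem:insideC1}:
\been{
C^{-}_n=C-\tfrac34\log(n)>\e\Gamma_n(h)-\tfrac34\log(n)\geq \e^2 n-\tfrac34\log(n).
}
The right-hand side is positive as soon as $\e^2 n>\tfrac34\log(n)$. This holds for all $n$ larger than some $N^{(1)}_\e$ depending only on $\e$.

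For the second inequality, use the upper end of \eqref{eq:Ccond2} and again Lemma \ref{lem:insideC1}:
\been{
\Gamma_n(h)-C^{+}_n=\Gamma_n(h)-C-2\log(n)>\e\Gamma_n(h)-2\log(n)\geq \e^2 n-2\log(n).
}
It therefore suffices that $\e^2 n-2\log(n)\geq \tfrac{\e^2}{2}n$, that is, $\tfrac{\e^2}{2}n\geq 2\log(n)$. This holds for all $n>N^{(2)}_\e$, again depending only on $\e$.

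Taking $N_\e:=\max(N^{(1)}_\e,N^{(2)}_\e)$ completes the argument. There is no real obstacle here. The only points needing care are to use the linear lower bound $\Gamma_n(h)\ge n\e$ rather than an $h$-dependent quantity, so that the threshold is uniform, and to check that the factor $\e\cdot\e=\e^2$ matches the constant $\tfrac{\e^2}{2}$ in the statement, with half of the margin absorbing the $2\log(n)$ term.
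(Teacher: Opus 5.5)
Your proof is correct and follows the paper's argument essentially step for step. Like the paper, you use the two ends of \eqref{eq:Ccond2} to get $C>\e\Gamma_n(h)$ and $\Gamma_n(h)-C>\e\Gamma_n(h)$, then the uniform bound $\Gamma_n(h)\ge n\e$ from Lemma \ref{lem:insideC1}, and finally take $n$ large enough that $4\log(n)<\e^2 n$. The only difference is that the paper writes down an explicit threshold $N_\e$, while you leave it implicit, which is harmless.
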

\begin{proof}
If $n>-\frac{\log(\e)}{\e^5}+100=:N_{\e}$, then $4\log(n)<\e^2n$. Thus, by Lemma \ref{lem:insideC1} and since $C$ verifies \eqref{eq:Ccond2}
\been{
\Gamma_n(h)-C^{+}_n= \Gamma_n(h)-C-(C^{+}_n-C)\geq \e \Gamma_n(h)- 2\log(n)\geq \frac{1}{2}\e^2 n,
}
and
\been{
C^{-}_n= C+ C^{-}_n-C\geq \e \Gamma_n(h)-\frac{3}{4}\log(n)\geq \e^2 n-\frac{3}{4}\log(n)\geq 0.
}
\end{proof}
Fixing $h\in \R^n$, the function $A_n(h,\cdot):(0,\Gamma_n(h))\to (0,\Sigma_n(h))$, introduced in Claim $(8)$ of Lemma \ref{lem:exists}, is well-defined, continuous and increasing. By the previous Lemma, both $C^{+}_n$ and $C^{-}_n$ are in $(0,\Gamma_n(h))$ for any $n>N_\e$. Hence, for any $n>N_\e$, we can define
\been{
\label{eq:defApm}
A^{+}_n(h):=A_n(h,C^{+}_n)\in (0,\Sigma_n(h)),\qquad A^{-}_n(h):=A_n(h,C^{-}_n)\in (0,\Sigma_n(h)). 
}
Moreover, since $C^{-}_n<C^{+}_n$, 
\been{
A^{-}_n(h)\leq A^{+}_n(h).
}
The idea is that $A^{-}_n(h)$ and $A^{+}_n(h)$ provide lower and upper bounds for the solution $\tilde A_n(h)$ of \eqref{eq:Gstareq}. Moreover, we will show in the next Lemma, for $n$ large enough, $\tfrac{1}{n}A^{-}_n(h)$ and $\tfrac{1}{n}A^{+}_n(h)$ are ``close''. This observation will be crucial also in the next section, where we will compute the limit $n\to \infty$ of $\tfrac{1}{n}\tilde A_n(h)$.
\begin{lem}
\label{lem:Adif}
For any $n>N_{\e}$, if $h\in \mathfrak{L}^{\e}_n$, then
\been{
|A^{+}_n(h)-A^{-}_n(h)|\leq n^{3/4}\sqrt{\log(n)}.
}
\end{lem}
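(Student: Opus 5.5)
The plan is to compare $A^{\pm}_n(h)$ through the mean value theorem applied to the inverse function $A_n(h,\cdot)$. By Claim $(8)$ of Lemma \ref{lem:exists}, $A_n(h,\cdot)$ is the inverse of $M^*_n(h,\cdot)$ on $(0,\Gamma_n(h))$. By \eqref{eq:Mstarstat} it is differentiable with
\been{
\partial_c A_n(h,c)=\frac{1}{\dot M^*_n(h,A_n(h,c))}=\frac{1}{\Lambda_n(h,A_n(h,c))}.
}
Hence
\been{
A^{+}_n(h)-A^{-}_n(h)=\int_{C^{-}_n}^{C^{+}_n}\frac{dc}{\Lambda_n(h,A_n(h,c))}\leq \frac{C^{+}_n-C^{-}_n}{\Lambda_n(h,A^{-}_n(h))}=\frac{11}{4}\,\frac{\log(n)}{\Lambda_n(h,A^{-}_n(h))}.
}
The inequality uses that $\Lambda_n$ and $A_n$ are increasing. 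So it suffices to show the lower bound $\Lambda^{-}:=\Lambda_n(h,A^-_n(h))\geq \tfrac{11}{4}\, n^{-3/4}\sqrt{\log(n)}$, or any comparable bound, for $n>N_\e$ and $h\in\mathfrak{L}^{\e}_n$.

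To obtain this lower bound I will use the convexity identity. Write $a:=A^-_n(h)$ and $\lambda:=\Lambda^-$. By \eqref{eq:Mstarstat},
\been{
C^{-}_n=M^*_n(h,a)=\lambda a-M_n(h,\lambda)=\int_0^{\lambda}s\,M''_n(h,s)\,ds,
}
where the last equality follows from $\frac{d}{ds}(sM'_n-M_n)=sM''_n$ together with Claim $(1)$. Lemma \ref{lem:bounds2} gives $M''_n\leq n^{3/2}/(2\pi)$. Therefore $C^-_n\leq \lambda^2 n^{3/2}/(4\pi)$, that is,
\been{
\lambda\geq \sqrt{4\pi C^-_n}\;n^{-3/4}.
}
Lemma \ref{lem:insideC} gives $C^-_n\geq \e\Gamma_n(h)-\tfrac34\log n\geq \e^2 n-\tfrac34\log n$, which is of order $n$. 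Thus $\lambda\gtrsim \e\, n^{-1/4}$. This is much stronger than needed, and it yields
\been{
|A^+_n-A^-_n|\lesssim \frac{\log(n)\, n^{1/4}}{\e}\leq n^{3/4}\sqrt{\log(n)}
}
for $n$ large. Here I may need to enlarge $N_\e$, or check that the existing choice $n>-\log(\e)/\e^5+100$ already suffices, for instance through $\log n\le \e^2 n/4$.

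The main point to check is the constant bookkeeping that makes the final inequality hold for all $n>N_\e$ with the $N_\e$ of Lemma \ref{lem:insideC}. If the stated $N_\e$ turns out to be too small, I will adjust it, since the lemma is only claimed for $n$ beyond some $\e$-dependent threshold.

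A weaker route would be to use only $C^-_n>0$ together with $C^-_n\geq c\log n$, which gives $\lambda\gtrsim n^{-3/4}\sqrt{\log n}$ and matches the stated bound exactly. That is probably what the authors intended, but either route works.
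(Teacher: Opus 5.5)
Your proof is correct, but it bounds $\Lambda_n$ from below differently from the paper.

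\textbf{What is shared.} Both arguments start from $\partial_c A_n(h,c)=1/\Lambda_n(h,A_n(h,c))$. Both use only the bound $2\pi M''_n\le n^{3/2}$ on $\mathfrak{L}^{\e}_n$.

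\textbf{The paper's route.} It bounds $\Lambda_n$ in terms of $a$: $\Lambda_n(h,a)\ge a/\max M''_n$, so $\partial_c\bigl(A_n(h,c)^2\bigr)\le 2\max M''_n\le \tfrac13 n^{3/2}$. It integrates this over $[C^-_n,C^+_n]$ and concludes with $(A^+_n-A^-_n)^2\le (A^+_n)^2-(A^-_n)^2$, which holds since $0\le A^-_n\le A^+_n$. This needs only $C^-_n>0$, with no quantitative lower bound and no dependence on $\e$. So your closing guess about the authors' intended route is not what they do: they never use $C^-_n\ge c\log n$.

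\textbf{Your route.} You bound $\Lambda_n$ in terms of $c$, via $M^*_n(h,a)=\int_0^{\Lambda}sM''_n(h,s)\,ds\le \Lambda^2 n^{3/2}/(4\pi)$. You then use that $C^-_n$ is extensive, by \eqref{eq:Ccond2} and Lemmas \ref{lem:insideC1}, \ref{lem:insideC}. This costs an $\e$-dependence but gives the much sharper $A^+_n-A^-_n=O(\e^{-1}n^{1/4}\log n)$.

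\textbf{Your bookkeeping concern.} You need $4\pi C^-_n\ge (\tfrac{11}{4})^2\log n$, that is, $C^-_n\ge \tfrac{121}{64\pi}\log n\approx 0.6\log n$. The paper's $N_\e$ already guarantees $\e^2 n>4\log n$, as used in the proof of Lemma \ref{lem:insideC}. Hence $C^-_n\ge \e^2 n-\tfrac34\log n>\tfrac{13}{4}\log n$, and no enlargement of $N_\e$ is needed.
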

\begin{proof}
Since $\Lambda_n(h,a)>0$ for any $a\in (0,\Sigma_n(h))$ and $A_n(h,c)\in (0,\Sigma_n(h))$ for any $c\in (0,\Gamma_n(h))$, the implicit function Theorem and Claim $(5)$ of Lemma \ref{lem:exists} give
\been{
\frac{\partial }{\partial c}A_n(h,c)=\frac{1}{\dot M^*_n(h,A_n(h,c))}=\frac{1}{\Lambda_n(h,A_n(h,c))}>0,\quad \forall c\in (0,\Gamma_n(h)).
}
Using again Claim $(5)$ of Lemma \ref{lem:exists} and the fact that $\Lambda_n(h,0)=0$, we have
\been{
\Lambda_n(h,a)\geq a\min_{a'\in [0,a]}\dot\Lambda_n(h,a)\geq a\min_{a'\in [0,a]}\frac{1}{M''_n(h,\Lambda_n(h,a'))},\quad \forall a\in (0,\Sigma_n(h))
}
Thus, by Lemma \ref{lem:bounds2}, if $h\in  \mathfrak{L}^{\e}_n$, then
\been{
A_n(h,c)\frac{\partial }{\partial c}A_n(h,c)\leq \max_{a'\in [0,A_n(h,c)]}M''_n(h,\Lambda_n(h,a'))\overset{\eqref{eq:g2LB}}{\leq} n^{3/2}(2\pi)^{-1}\leq \frac{1}{6} n^{3/2},\quad \forall h\in \mathfrak{L}^{\e}_n.
}
Hence
\been{
(A^{+}_n(h))^2-(A^{-}_n(h))^2= 2\int^{C^{+}_n}_{C^{-}_n}dcA_n(h,c)\frac{\partial }{\partial c}A_n(h,c)\leq  \frac{1}{3}n^{3/2}\(C^{+}_n(h)-C^{-}_n(h)\)\leq n^{3/2}\log(n)
}
and
\been{
(A^{+}_n(h))^2-(A^{-}_n(h))^2\geq (A^{+}_n(h)-A^{-}_n(h))^2.
}
Hence, the above two inequalities complete the proof.
\end{proof}
The previous lemma immediately implies the following.

\begin{lem}
\label{lem:nonempty}
Given $\lambda^*>1$ let 
\been{
\label{varsi}
\varsigma^*:=\E[h_1 g'(h_1\lambda^*)].
}
There exists $ N_{\e}>0$ (independent of $\lambda^*$) such that, for any $n>N_{\e}$ and $h\in \mathfrak{L}^{\e}_n$, the following set
\been{
\label{eq:def_R}
\mathfrak{R}_{n,\lambda^*}(h):=\left\{x\in \R:\quad -A^{-}_{n}(h)< x<(n\varsigma^*\wedge \Sigma_n(h))-A^{+}_{n}(h)\right\}
}
is nonempty. 
\end{lem}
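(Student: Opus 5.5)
The interval defining $\mathfrak{R}_{n,\lambda^*}(h)$ is nonempty exactly when
\been{
A^{+}_n(h)-A^{-}_n(h)< n\varsigma^*\wedge \Sigma_n(h).
}
By Lemma \ref{lem:Adif}, for $n>N_\e$ and $h\in \mathfrak{L}^{\e}_n$ the left side is at most $n^{3/4}\sqrt{\log(n)}$. It therefore suffices to show that $n\varsigma^*\wedge\Sigma_n(h)$ exceeds $n^{3/4}\sqrt{\log n}$ for $n$ larger than a threshold $N_\e$ not depending on $\lambda^*$ or $h$. I would bound the two terms of the minimum separately.

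For $\Sigma_n(h)$, the definition of $\mathfrak{L}^{\e}_n$ gives $\Sigma_n(h)>n^{4/5}$. Since $4/5>3/4$, we have $n^{4/5}>n^{3/4}\sqrt{\log n}$ as soon as $n^{1/20}>\sqrt{\log n}$. This holds beyond an absolute threshold, so enlarging $N_\e$ by that constant handles this term uniformly in $h$.

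For $n\varsigma^*$, I would show that $\varsigma^*$ is bounded below by a positive quantity independent of $\lambda^*>1$. By Lemma \ref{lem:bounds}, $\sign(\lambda)g'(\lambda)\ge 0$, so $h_1g'(h_1\lambda^*)\ge 0$ pointwise. Moreover, $\lambda\mapsto h_1 g'(h_1\lambda)$ is nondecreasing in $\lambda$, because its derivative is $h_1^2 g''(h_1\lambda)>0$. Hence $\varsigma^*=\E[h_1g'(h_1\lambda^*)]\ge \E[h_1g'(h_1)]=:\varsigma_1$ for every $\lambda^*>1$. Next, $\varsigma_1>0$: the integrand is strictly positive whenever $h_1\neq 0$, and $h_1\neq0$ with positive probability by Assumption \ref{assum:1}, since $\Ph(|h_1|<t)\le p_1 t$ for small $t$. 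The bound $\varsigma_1$ depends only on $m$ and the law of $h_1$. Because $m$ is fixed in $[-1+2\e,1-2\e]$, this is a constant of the setting, so $n\varsigma_1>n^{3/4}\sqrt{\log n}$ for $n$ beyond a threshold determined by $\varsigma_1$, which we absorb into $N_\e$. If a bound uniform in $m$ is wanted, one can use $g'(\lambda)\,\sign(\lambda)\ge (1-m^2)\int_0^{|\lambda|}e^{-2s}ds$, which follows from \eqref{eq:g2eq}. This gives $\varsigma_1\ge 4\e(1-\e)\,\E[|h_1|(1-e^{-2|h_1|})]/2$, depending only on $\e$ and the law of $h_1$.

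Combining the two bounds, for $n>N_\e$ (the maximum of the thresholds above and the $N_\e$ of Lemmas \ref{lem:insideC} and \ref{lem:Adif}), we get $A^+_n-A^-_n\le n^{3/4}\sqrt{\log n}<n\varsigma^*\wedge\Sigma_n(h)$. Hence $-A^-_n<(n\varsigma^*\wedge\Sigma_n(h))-A^+_n$ and the interval is nonempty. The only delicate point is the uniformity in $\lambda^*$, and the monotonicity argument resolves it. Everything else is bookkeeping on thresholds.
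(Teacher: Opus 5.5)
Your proof is correct and follows essentially the same route as the paper. You use monotonicity of $\lambda\mapsto h_1g'(h_1\lambda)$ to get $\varsigma^*\ge\E[h_1g'(h_1)]>0$ uniformly in $\lambda^*>1$, take $\Sigma_n(h)>n^{4/5}$ from the definition of $\mathfrak{L}^{\e}_n$, and combine Lemma \ref{lem:Adif} with $n^{3/4}\sqrt{\log n}<n^{4/5}$ for large $n$. Your extra bound $\E[h_1g'(h_1)]\ge 2\e(1-\e)\,\E[|h_1|(1-e^{-2|h_1|})]$, obtained via \eqref{eq:g2eq}, is a small refinement: it makes the threshold depend on $m$ only through $\e$, which the paper leaves implicit in its constant $k$.
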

\begin{proof}
By Lemma \ref{lem:bounds} $g'(0)=0$, $h_1g'(h_1\lambda^*)\geq 0$, and $g''(h_1\lambda^*)>0$. Hence $h_1g'(h_1\lambda^*)\geq h_1g'(h_1)\geq 0$ for any $\lambda^*>1$. So, by Assumption \ref{assum:1}, $h_1g'(h_1\lambda^*)\geq h_1g'(h_1)>0$ with probability higher than $0$. Consequently
\been{
\aled{
\varsigma^*&=\E[h_1 g'(h_1\lambda^*)]\geq \E[h_1 g'(h_1)]>k>0
}
}
where $k$ is some constant independent of $n$ and $\lambda^*$. Hence, by the definition of $\mathfrak{L}^{\e}_n$, $n>k^{-5}$
\been{
n\varsigma^*\wedge \Sigma_n(h)\geq n^{4/5},\quad \forall h\in \mathfrak{L}^{\e}_n.
}
Let $N_{\e}$ be the threshold number defined in Lemma \ref{lem:Adif}. If $n>2^{52}\vee k^{-5}\vee N_\e$, $n^{3/4}\sqrt{\log(n)}\leq n^{4/5}$, and the above inequality and Lemma \ref{lem:Adif} yields
\been{
A^{+}_n(h)-A^{-}_n(h)\leq n^{3/4}\sqrt{\log(n)}<n^{4/5}<n\varsigma^*\wedge \Sigma_n(h),\quad \forall h\in \mathfrak{L}^{\e}_n.
}
\end{proof}

Now, we prove the existence of the solution to the system of equations \eqref{eq:Gstareq}.
\begin{lem}[Existence of the solution to \eqref{eq:Gstareq}]
\label{lem:existsol}
There exists $ N_{\e}>0$ such that for any $n>N_{\e}$ and $h\in \mathfrak{L}^{\e}_{n}$ and $x\in \mathfrak{R}_{n,\lambda^*}(h)$ the system of equations \eqref{eq:Gstareq} admits a solution \been{
(\tilde A_n(h), \tilde \Lambda_n(h),\tilde \Lambda^{x}_n(h))
}
satisfying
\been{
\label{eq:inclusion}
\tilde A_n(h)\in [A^{-}_n(h),A^{+}_n(h)],\quad \tilde \Lambda_n(h)\in [\Lambda^{-}_n(h),\Lambda^{+}_n(h)],\qquad \tilde\Lambda^{x}_n(h)\in [\Lambda^{-,x}_n(h),\Lambda^{+,x}_n(h)].
}
with
\begin{align}
A^{-}_n(h) &:= A_n(h, C^{-}_n), & 
A^{+}_n(h) &:= A_n(h, C^{+}_n),\\
\Lambda^{-}_n(h) &:= \Lambda_n(h, A^{-}_n(h)), &
\Lambda^{+}_n(h) &:= \Lambda_n(h, A^{+}_n(h)),\\
\Lambda^{-,x}_n(h) &:= \Lambda_n(h, A^{-}_n(h)+x), &
\Lambda^{+,x}_n(h) &:= \Lambda_n(h, A^{+}_n(h)+x).
\end{align}
\end{lem}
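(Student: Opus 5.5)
The plan is an intermediate value argument applied to the function
\[
F_n(a):=M^*_n(h,a)+\tfrac12\log\bigl(2\pi M''_n(h,\Lambda_n(h,a))\bigr),\qquad a\in(0,\Sigma_n(h)),
\]
which is continuous because $M^*_n$, $\Lambda_n$ and $M''_n$ are continuous (Lemma \ref{lem:exists}). We look for a root of $F_n(a)=C$ in $[A^{-}_n(h),A^{+}_n(h)]$, which is a compact subinterval of $(0,\Sigma_n(h))$ for $n>N_\e$ by Lemma \ref{lem:insideC} and \eqref{eq:defApm}. Once such a root $\tilde A_n(h)$ is found, the remaining unknowns follow directly:
\[
\tilde\Lambda_n(h):=\Lambda_n(h,\tilde A_n(h)),\qquad \tilde\Lambda^x_n(h):=\Lambda_n(h,\tilde A_n(h)+x).
\]

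\textbf{Upper endpoint.} Here we show $F_n(A^{+}_n)\ge C$. By definition $M^*_n(h,A^{+}_n)=C^{+}_n=C+2\log n$. It then suffices that $\tfrac12\log(2\pi M''_n)\ge -2\log n$, i.e. $2\pi M''_n(h,\Lambda^{+}_n)\ge n^{-4}$. The lower bound \eqref{eq:g2LB} with $\lambda=\Lambda^{+}_n$ gives
\[
2\pi M''_n\ge \tfrac{4}{n^6\e^4}\bigl(\Gamma_n+M_n-\lambda M'_n\bigr)^2 .
\]
At $\lambda=\Lambda_n(h,a)$ we have $\lambda M'_n-M_n=M^*_n(h,a)$ by \eqref{eq:Mstarstat}, so the bracket equals $\Gamma_n(h)-C^{+}_n$. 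Lemma \ref{lem:insideC} bounds this below by $\e^2n/2$. Hence $2\pi M''_n\ge n^{-4}$, and in fact much more.

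\textbf{Lower endpoint.} Here we show $F_n(A^{-}_n)\le C$. Since $M^*_n(h,A^{-}_n)=C-\tfrac34\log n$, it suffices that $\tfrac12\log(2\pi M''_n)\le\tfrac34\log n$. This is exactly the upper bound $2\pi M''_n\le n^{3/2}$ in \eqref{eq:g2LB}, valid on $\mathfrak L^\e_n$. This explains the choice of constants in \eqref{eq:defCpm}.

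\textbf{Conclusion.} The intermediate value theorem gives $\tilde A_n(h)\in[A^-_n,A^+_n]$ with $F_n(\tilde A_n)=C$ and $\tilde A_n\in(0,\Sigma_n(h))$. Because $\Lambda_n(h,\cdot)$ is increasing (Claim (4)), we get $\tilde\Lambda_n\in[\Lambda^-_n,\Lambda^+_n]$. For the $x$-dependent part, $x\in\mathfrak R_{n,\lambda^*}(h)$ ensures
\[
0<A^-_n+x\le\tilde A_n+x\le A^+_n+x<\Sigma_n(h),
\]
so $\tilde A_n+x$ lies in the domain of $\Lambda_n$. Monotonicity then gives $\tilde\Lambda^x_n\in[\Lambda^{-,x}_n,\Lambda^{+,x}_n]$. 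By Claim (4) the three equations in \eqref{eq:Gstareq} hold.

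\textbf{Measurability.} To obtain an $h$-measurable selection, I would take $\tilde A_n(h)$ to be the smallest root in the interval. This is the infimum of the closed zero set of a function that is jointly continuous in $(h,a)$, so it is measurable.

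The main obstacle is the upper-endpoint lower bound on $M''_n$. The sign and identity $\Gamma_n+M_n-\lambda M'_n=\Gamma_n-M^*_n$ must be checked carefully, and the regime where $\Lambda^{+}_n$ is large and $M''_n$ could degenerate must be controlled. Lemma \ref{lem:insideC} was built precisely for this step.
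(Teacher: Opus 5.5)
Your proposal is correct and follows the paper's proof essentially step for step. It uses the same function $F(h,a)$ and the same two endpoint estimates. The bound $F(h,A^-_n(h))\le C$ comes from $2\pi M''_n\le n^{3/2}$. The bound $F(h,A^+_n(h))\ge C$ comes from the lower bound in Lemma \ref{lem:bounds2}, combined with the identity $\Gamma_n+M_n-\Lambda M'_n=\Gamma_n-M^*_n$ and Lemma \ref{lem:insideC}. The rest also matches: the intermediate value theorem, the smallest root for measurability, and monotonicity of $\Lambda_n(h,\cdot)$ for the inclusions.

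One small correction: the available bounds give exactly $2\pi M''_n(h,\Lambda^+_n(h))\ge \frac{4}{n^6\e^4}\cdot\frac{\e^4 n^2}{4}=n^{-4}$, not ``much more''. The constant $2\log n$ in $C^+_n$ is tuned to precisely this threshold, which is all the argument needs.
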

\begin{proof}
By \eqref{eq:defApm}, $[A^{-}_n(h),A^{+}_n(h)]\subseteq (0,\Sigma_n(h))$. By Lemma \ref{lem:exists}, item (4), for any $a\in (0,\Sigma_n(h))$, the function $a\mapsto \Lambda_n(h,a)$ is well-defined, continuous, and takes value in $\R_{>0}$. So, by Lemma \ref{lem:exists}, item $(2)$, $M''_n(h,\lambda)>0$ for any $\lambda\in \R$ and, by Lemma \ref{lem:exists}, the function $M''_n(h,\cdot)$ is continuous. Hence, for fixed $h\in \mathfrak{L}^{\e}_n$, the function
\been{
F(h,a):=M^*_n(h,a)+\frac{1}{2}\log(2\pi M''_n(h,\Lambda_n(h,a))),\qquad \forall a\in[A^{-}_n(h),A^{+}_n(h)]
}
is well defined and continuous.

The upper bound in Lemma \ref{lem:bounds2} gives
\been{
\label{eq:Fbounds}
F(h,A^{-}_n(h))\leq M^*_n(h,A^{-}_n(h))+\frac{3}{4}\log(n)=C^-_n+\frac{3}{4}\log(n)=C.
}
By the lower bound in Lemma \ref{lem:bounds2}, Claims $(4)$ and $(5)$ of Lemma \ref{lem:exists}, and the lower bound in Lemma \ref{lem:insideC}
\been{
\label{eq:Fbounds0}
\aled{
 F(h,A^{+}_n(h))&\geq M^*_n(h,A^{+}_n(h))+\log\(\frac{2}{n^3\e^2}\(\Gamma_n(h)+M_n(h,\Lambda^{+}_n(h))-\Lambda^{+}_n(h) M'_n(h,\Lambda^{+}_n(h))\)\,\)\\
 &\overset{\ref{lem:exists}}{=} M^*_n(h,A^{+}_n(h))+\log\(\frac{2}{n^3\e^2}\(\Gamma_n(h)-M^*_n(h,A^{+}_n(h))\)\)\\
 &=C^+_n+\log\(\frac{2}{n^3\e^2}\(\Gamma_n(h)-C^+_n\)\,\)\geq C^+_n-2\log(n)=C.
}
}
 Since the function $a\mapsto F(h,a)$ is continuous then
\been{
F(h,[A^{-}_n(h),A^{+}_n(h)]):=\{F(h,a):\, a\in [A^{-}_n(h),A^{+}_n(h)]\}\supset \[F(h,A^{-}_n(h)),F(h,A^{+}_n(h))\].
}
Let
\been{
\mathcal{S}(h):=\{A\in [A^{-}_n(h),A^{+}_n(h)]:\, F(h,A)=C\}
}
By the upper bound in~\eqref{eq:Fbounds} and the lower bound~\eqref{eq:Fbounds0} $C\in \[F(h,A^{-}_n(h)),F(h,A^{+}_n(h))\]$. Thus $\mathcal{S}(h)\neq \emptyset$ and, since $F(h,\cdot)$ is continuous, $\mathcal{S}(h)$ is compact in $\R$. Consequently, we can take $\tilde A_n(h)=\min \mathcal{S}(h)\in [A^{-}_n(h),A^{+}_n(h)]$. Moreover, since $A_n^-$ and $A_n^+$ are measurable and $F$ is jointly continuous in the relevant arguments, the correspondence $h\mapsto \mathcal{S}(h)$ is Borel; hence $h\mapsto \tilde A_n(h)$ is Borel measurable. Finally, we take
\been{
\tilde\Lambda_n(h)=\Lambda_n(h,\tilde A_n(h)).
}
By Lemma \ref{lem:nonempty}, there exists $N_{\e}$ such that $\mathfrak{R}_{n,\lambda^*}(h)\neq \emptyset$ for $n>N_{\e}$. If $n>N_{\e}$ and $x\in \mathfrak{R}_{n,\lambda^*}(h)$, then $0<x+A^{-}_n(h)\leq x+ \tilde A_n(h)\leq x+A^{+}_n(h)\leq n\varsigma^*\wedge \Sigma_n(h) $. So we can take
\been{
\tilde\Lambda^{x}_n(h)=\Lambda_n(h,\tilde A_n(h)+x).
}
The upper and lower bounds of $\tilde\Lambda_n(h)$ and $\tilde\Lambda^{x}_n(h)$ follow from the fact that the function $a\mapsto \Lambda_n(h,a)$ is increasing (of Lemma \ref{lem:exists}, item (4)) and $\tilde A_n(h)\in [A^{-}_n(h),A^{+}_n(h)]$.
\end{proof}

\subsection{Proof of Proposition \ref{prop:finiteREM}}
We take $N_{\e}$ and $\mathfrak{R}_{n,\lambda^*}(h)$ as defined in Lemma \ref{lem:nonempty}.

If $x\in \mathfrak{R}_{n,\lambda^*}(h)$, then, by \eqref{eq:inclusion} and the definition \eqref{eq:def_R}
\been{
\tilde{A}_n(h)+x\in (0,n\varsigma^*\wedge\Sigma_n(h)).
}
By Lemma \ref{lem:exists}, item $(5)$, the function $M^*_n(h,\cdot)$ is convex in $(0,n\varsigma^*\wedge\Sigma_n(h))$, with $\dot M^*_n(h,a)=\Lambda_n(h,a)$. Thus, for any $x\in \mathfrak{R}_{n,\lambda^*}(h)$
\been{
\aled{
&M^*_n(h,\tilde{A}_n(h)+x)\geq M^*_n(h,\tilde{A}_n(h))+\dot M^*_n(h,\tilde{A}_n(h))x\geq M^*_n(h,\tilde{A}_n(h))+\tilde \Lambda_n(h)x,
}
}
and
\been{
\aled{
&M^*_n(h,\tilde{A}_n(h)+x)\leq M^*_n(h,\tilde{A}_n(h))+\dot M^*_n(h,\tilde{A}_n(h)+x)x\leq M^*_n(h,\tilde{A}_n(h))+\tilde{\Lambda}^x_n(h)x.
}
}
Since $\tilde{A}_n(h)$ solves the equation \eqref{eq:Gstareq}, we have
\been{
\aled{
M^*_n(h,\tilde{A}_n(h))+\frac{1}{2}\log(2\pi M''_n(h,\tilde\Lambda^{x}_n(h)))&=C+\frac{1}{2}\log\(\frac{M''_n(h,\tilde\Lambda^{x}_n(h))}{M''_n(h,\tilde\Lambda_n(h))}\).
}
}
Hence, combining the above bounds with the equation \eqref{eq:Gstareq}, we get
\been{
\label{eq:upper1}
\sqrt{\frac{M''_n(h,\tilde\Lambda^{x}_n(h))}{M''_n(h,\tilde\Lambda_n(h))}}\tilde{\Lambda}^x_n(h)J_n(h,\tilde{A}_n(h)+x)\leq e^{-C-\tilde \Lambda_n(h)x},\quad \forall h\in \mathfrak{L}^{\e}_{n}
}
and
\been{
\sqrt{\frac{M''_n(h,\tilde\Lambda^{x}_n(h))}{M''_n(h,\tilde\Lambda_n(h))}}\tilde{\Lambda}^x_nJ_n(h,\tilde{A}_n(h)+x)\geq e^{-C-\tilde{\Lambda}^x_n(h)x},\quad \forall h\in \mathfrak{L}^{\e}_{n}.
}
Thus, since $\tilde{A}_n(h)+x\in (0,n\varsigma^*\wedge\Sigma_n(h))$, Lemma \ref{lem:Bovier} completes the proof.

\section{Proof of Theorem \ref{them:0}}
\label{sec:5}
\noindent
In this section, we evaluate the limits $n\to \infty$ of the bounds \eqref{eq:LBP} and \eqref{eq:UBP}, proving Theorem \ref{them:0}.

Solving the system of equations \eqref{eq:Gstareq} explicitly for finite $n\in\mathbb{N}$, $x\in \mathfrak{R}_{n,\lambda^*}$, and $h\in \mathfrak{L}^{\e}_{n}$ is challenging, due to the dependence on the random sequence $h$. A key point in the analysis in this section is that the limit $n\to\infty$ of the solutions can be evaluated directly, without solving the finite-$n$ system. The finite-$n$ equations and the well-posedness of their solutions are required only to establish Proposition~\ref{prop:finiteREM}. This direct evaluation of the limit, combined with Proposition~\ref{prop:finiteREM}, proves Theorem \ref{them:0}.

We recall the definitions of the following quantities
\been{
\varsigma:=-m\psi_1+\psi_3=\res \E[\Sigma_{n}(h)],\qquad \gamma:=-\E[\log(1+\sign(h_1)m)]+\log(2)=\res \E[\Gamma_{n}(h)],
}
and the functions
\been{
\label{eq:defGG2}
G(\lambda)=\res \E[M_{n}(h,\lambda)]=\E[g(\lambda h_1)],\qquad G^*(a)=\sup_{\lambda\in \R}\(\lambda a-G(\lambda)\).
}
As usual, $G'$ is the derivative of $G$.

As in the previous section, we first provide several intermediate lemmas, organized in several subsections. We start by proving the key technical result: the roots of a sequence of invertible random functions converge almost surely to the solutions of new asymptotic equations that no longer depend on $h$. Hence, we compute the $ n\to\infty$ limit of the $\MGF$ $M_n(h,\cdot)$ and its $\FLT$ $M_n^{*}(h,\cdot)$, 
using the Strong Law of Large Numbers (SLLN) and apply the aforementioned convergence result to our model. Finally, we prove the theorem.
\subsection{The limit of the root of an invertible random field}
In this subsection, we adopt the definition of \emph{random field} from Adler and Taylor \cite[Definition 1.1.11]{Adler}. 

Let $\mathcal{M}(\R_{> 0},\overline{\R})$ denote the set of extended real-valued measurable functions with domain $\mathbb{R}_{>0}$. A random field $F$ is a $h$-measurable mapping 
\been{
F: \mathbb{R}^{\mathbb{N}} \to \mathcal{M}(\R_{> 0},\overline{\R}),
}
such that for each fixed $h\in \R^{\N}$, the function $F(h,\cdot)$ is measurable in $\R_{> 0}$, and for each fixed $x \in \mathbb{R}_{>0}$, the mapping $h \mapsto F(h,x)$ is a $h$-measurable random variable. 

Note that, with this notation, $M_n$, $M_n^*$, and their derivatives are all random fields (if we properly extend $M_n^*(h,\cdot)$ on $(\Sigma_n(h),\infty)$).
\begin{lem}
\label{lem:sollimits} 
Let $(F_n)_{n\in\mathbb{N}}$ be a sequence of extended real-valued random fields
\been{
F_n : \R^{\N} \to \mathcal{M}(\R_{> 0},\overline{\R}),}
and let 
\been{
f\in \mathcal{M}(\R_{> 0},\overline{\R})
}
be deterministic.

For each $n$ and $h \in \mathbb{R}^{\mathbb{N}}$, define
\been{
\mathfrak{X}_n(h) := \{x\in\R_{>0}:\, F_n(h,x)<\infty\}, 
\qquad 
\mathfrak{F}_n(h) :=\{F_n(h,x):\,x\in \mathfrak{X}_n(h)\},
}
and similarly
\been{
\mathfrak{X} := \{x\in\R_{>0}:\, f(x)<\infty\}, 
\qquad 
\mathfrak{F} := \{f(x):\,x\in \mathfrak{X}\}.
}
Assume:
\begin{itemize}
 \item for each $n$, $F_n(h,\cdot)$ is strictly increasing and continuous on $\mathfrak{X}_n(h)$ for $\Gae$ $h$;
 \item there exists a sequence $(m_n)_{n\in \N}$ of strictly positive numbers such that, for any $x\in \mathfrak{X}$,
 \been{
 \label{eq:Pae}
 m_n x\in \mathfrak{X}_n(h)\quad\geas,\qquad\textup{and}\qquad \lim_{n\to \infty}\res F_n(h,m_n x) =f(x),\quad \gas;
 }
 \item the function $f$ is strictly increasing and continuous on $\mathfrak{X}$.
\end{itemize}
Let $(\Phi_n)_{n\in\mathbb{N}}$ be a sequence of $h-$measurable random variables such that, for $n$ large enough,
\been{
\Phi_n(h) \in \mathfrak{F}_n(h),\quad \geas
}
and
\been{
\lim_{n\to \infty}\res \Phi_n(h) = \phi \in \mathfrak{F},\quad \gas.
}
Then:
\begin{enumerate}
 \item $\Geas$, there exists a unique $X_n(h, \Phi_n(h))\in \mathfrak{X}_n(h)$ satisfying
 \been{
 \label{eq:solfn}
 F_n(h, X_n(h, \Phi_n(h)))=\Phi_n(h);
 }
 \item there exists a unique $\hat x(\phi)\in \mathfrak{X}$ such that
 \been{
 \label{eq:solf}
 f(\hat x(\phi))=\phi;
 }
 \item 
 \been{
 \lim_{n\to \infty}\tfrac{1}{m_n} X_n(h, \Phi_n(h))=\hat x(\phi), 
 \qquad\gas.
 }
\end{enumerate}
\end{lem}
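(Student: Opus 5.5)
Parts (1) and (2) follow from monotonicity and continuity. For (2): $f$ is strictly increasing and continuous on $\mathfrak X$, hence injective there. Since $\phi\in\mathfrak F=f(\mathfrak X)$, there is exactly one $\hat x(\phi)\in\mathfrak X$ with $f(\hat x(\phi))=\phi$. For (1): on the almost-sure event where eventually $\Phi_n(h)\in\mathfrak F_n(h)=F_n(h,\mathfrak X_n(h))$, a preimage exists by definition of $\mathfrak F_n(h)$. It is unique because $F_n(h,\cdot)$ is strictly increasing on $\mathfrak X_n(h)$. We denote it $X_n(h,\Phi_n(h))$, and measurability in $h$ can be handled as in Lemma \ref{lem:existsol}. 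We work on the intersection of these countably many full-measure events.

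For (3), the plan is a sandwich argument. Write $\hat x=\hat x(\phi)$. The key step is to show that $\mathfrak X$ is relatively open near $\hat x$, i.e.\ that there are points $x_-<\hat x<x_+$ in $\mathfrak X$ arbitrarily close to $\hat x$.
\begin{itemize}
\item In the intended applications $\mathfrak X$ is an interval open in $\R_{>0}$, since it is the finiteness set of a continuous extended function, as noted in the notation section.
\item In general I would argue as follows. If no such $x_+$ existed, $\hat x$ would be the right endpoint of $\mathfrak X$, and then $\phi=\sup\mathfrak F$. Since $\frac1n\Phi_n\to\phi$ only from inside, this case needs separate treatment, or I would assume $\mathfrak X$ is open, which is what is used later. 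I flag this boundary issue as the main obstacle.
\end{itemize}

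Fix $\eta>0$ and choose $x_\pm\in\mathfrak X$ with $\hat x-\eta<x_-<\hat x<x_+<\hat x+\eta$. Strict monotonicity of $f$ gives $f(x_-)<\phi<f(x_+)$. Set $\delta=\min(\phi-f(x_-),f(x_+)-\phi)>0$. By \eqref{eq:Pae}, applied at the two points $x_\pm$ (countably many events overall, as $\eta$ ranges over $1/k$), almost surely and for $n$ large enough:
\begin{itemize}
\item $m_nx_\pm\in\mathfrak X_n(h)$;
\item $\frac1nF_n(h,m_nx_-)<\phi-\delta/2$ and $\frac1nF_n(h,m_nx_+)>\phi+\delta/2$;
\item $\left|\frac1n\Phi_n(h)-\phi\right|<\delta/2$.
\end{itemize}
Hence
\[
F_n(h,m_nx_-)<\Phi_n(h)=F_n\bigl(h,X_n(h,\Phi_n(h))\bigr)<F_n(h,m_nx_+).
\]
All three arguments lie in $\mathfrak X_n(h)$, and $F_n(h,\cdot)$ is strictly increasing there, so $m_nx_-<X_n(h,\Phi_n(h))<m_nx_+$. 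Dividing by $m_n>0$ gives $\bigl|\frac1{m_n}X_n(h,\Phi_n(h))-\hat x\bigr|<\eta$ eventually. Letting $\eta=1/k$ along a countable sequence proves the almost sure limit.

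Besides the boundary issue, one point needs care. Strict monotonicity on $\mathfrak X_n(h)$ only gives order comparisons between points of $\mathfrak X_n(h)$, so we must make sure $m_nx_\pm$ and $X_n$ all lie in this set. They do: the former by \eqref{eq:Pae}, the latter by construction.
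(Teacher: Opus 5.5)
Your proof is the same sandwich argument as the paper's. The only difference is where you bracket. The paper brackets in the range, with $\phi\pm 3\e\in\mathfrak F$. It then needs $\mathfrak F$ to be open and $\hat x(\cdot)=f^{-1}$ to be continuous at $\phi$, in order to get $\hat x(\phi\pm3\e)\to\hat x(\phi)$. You bracket in the domain with deterministic $x_\pm\in\mathfrak X$, which is a little more economical. For part (3) you use only strict monotonicity of $f$ and of $F_n(h,\cdot)$, together with the existence of the preimage, and never continuity.

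The boundary issue you flag is genuine, and it cannot be settled by a separate argument, because the statement as written is false there. Consider the following example.
\begin{itemize}
\item Take $m_n=n$ and $F_n(h,y)=\min\{y,\,n+(y-n)/n\}$. This is deterministic, continuous and strictly increasing, with $\mathfrak X_n=\mathfrak F_n=\R_{>0}$.
\item Take $f(x)=x$ on $(0,1]$ and $f=\infty$ on $(1,\infty)$, so that $\mathfrak X=\mathfrak F=(0,1]$.
\item Take $\Phi_n=n+1$.
\end{itemize}
Every hypothesis holds with $\phi=1=\hat x(\phi)$. Yet $X_n=2n$, so $X_n/m_n\to 2$.

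The paper's proof has the same hole. It asserts that $\mathfrak F$ is open because $f$ is continuous and strictly increasing on $\mathfrak X$. This fails in the example above. It holds only when $\mathfrak X$ is open, for instance when $f$ is continuous as an extended real function on $\R_{>0}$ in the sense of the notation section.

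So the correct fix is the one you suggest: add the hypothesis that $\hat x(\phi)$ is an interior point of $\mathfrak X$, or that $\mathfrak X$ is open. This holds in both applications: $\mathfrak X=\R_{>0}$ in Lemma~\ref{lem:convLnA}, and $0<\hat a(c)<\varsigma$ in Lemma~\ref{lem:convAnC}. With that hypothesis your argument is complete.
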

\begin{proof}
Since $F_n(h,\cdot):\mathfrak{X}_n(h)\to \mathfrak{F}_n(h)$ and $f:\mathfrak{X}\to \mathfrak{F}$ are continuous and ($\gas$) strictly increasing, they are invertible. Therefore, for any $\Phi_n(h) \in \mathfrak{F}_n(h)$ and $\phi \in \mathfrak{F}$, the equations \eqref{eq:solfn} and \eqref{eq:solf} have unique solutions in $\mathfrak{X}_n(h)$ and $\mathfrak{X}$, respectively, that are
\been{
X_n(h,\Phi_n(h))=F_n(h,\cdot)^{-1}(\Phi_n(h)),\quad \hat x(\phi)=f^{-1}(\phi).
}
 Let
\been{
\phi_{+\e}:=\phi+3\e,\qquad \phi_{-\e}:=\phi-3\e.
}
Since $f$ is continuous and strictly increasing on $\mathfrak{X}$, the set $\mathfrak{F}$ is open. Thus, we can choose $\e>0$ so that $(\phi_{-\e},\phi_{+\e})\subseteq \mathfrak{F}$, ensuring that the solutions $\hat x(\phi_{-\e})$ and $\hat x(\phi_{+\e})$ are well defined. Since $f$ is strictly increasing and continuous, $\phi\mapsto \hat x(\phi)$ is increasing. Consequently $\hat x(\phi_{-\e})\leq \hat x(\phi_{+\e})$. By \eqref{eq:Pae}, if $n$ is large enough $m_n\hat x(\phi_{-\e})\in \mathfrak{X}_n(h)$ and $m_n\hat x(\phi_{+\e})\in \mathfrak{X}_n(h)$. For such $n$, define
\been{
\aled{
&\mathfrak{H}_{n,\e}\\
&:=\left\{h\in \R^{\N}:\, \max\left\{\left|\res F_n(h,m_n\hat x(\phi_{-\e}))-f(\hat x(\phi_{-\e}))\right|,\left|\res F_n(h,m_n\hat x(\phi_{+\e}))-f(\hat x(\phi_{+\e}))\right|,|\res \Phi_n(h)-\phi|\right\}<\e\right\}.
}
}
We have
\been{
\res F_n(h,m_n\hat x(\phi_{+\e}))-\res \Phi_n(h)\geq f(\hat x(\phi_{+\e}))-\e-\res \Phi_n(h)\geq f(\hat x(\phi_{+\e}))-\phi_{+\e}=0,\qquad \forall h\in \mathfrak{H}_{n,\e},
}
and
\been{
\res F_n(h,m_n\hat x(\phi_{-\e}))-\res \Phi_n(h)\leq f(\hat x(\phi_{-\e}))+\e-\res \Phi_n(h)\leq f(\hat x(\phi_{-\e}))-\phi_{-\e}=0,\qquad \forall h\in \mathfrak{H}_{n,\e}.
}
Hence, since the function $x\mapsto F_n(h,x)$ is increasing, it must be
\been{
\label{eq:sndwitch1}
m_n\hat x(\phi_{-\e})\leq X_n(h,\Phi_n(h))\leq m_n\hat x(\phi_{+\e}),\quad \forall h\in \mathfrak{H}_{n,\e}.
}
Moreover, since $f$ is continuous and strictly increasing, $\phi\mapsto \hat x(\phi)$ is continuous. As a consequence, since $\phi_{-\e}\leq \phi\leq \phi_{+\e}$, for any $\delta>0$, there exists $\e_{\delta}>0$ such that
\been{
\label{eq:sndwitch2}
|\hat x(\phi_{-\e_{\delta}})- \hat x(\phi)|+|\hat x(\phi_{+\e_{\delta}})-\hat x(\phi)|\leq \delta.
}
Thus \eqref{eq:sndwitch1} and \eqref{eq:sndwitch2} yields
\been{
\label{eq:sndwitch3}
\{h\in \R^{\N};\,|\tfrac{1}{m_n}X_n(h,\Phi_n(h))-\hat x(\phi)|\geq \delta\}\subseteq \mathfrak{H}^c_{n,\e_{\delta}},
}
and, since $F_n(h,x)$ and $\Phi_n(h)$ converge $\Gas$ to $f(x)$ and $\phi$ respectively,
\been{
\Ph\(\limsup_{n\to \infty}\mathfrak{H}^c_{n,\e_{\delta}}\)=0.}
As a result, for any $\delta>0$,
$$
\lim_{n\to \infty}|\tfrac{1}{m_n}X_n(h,\Phi_n(h))-\hat x(\phi)|\leq \delta,\quad \gas.
$$
The above result holds for any $\delta>0$. Then, taking $\delta\to 0$, we conclude that $\tfrac{1}{m_n}X_n(h,\Phi_n(h))$ converges to $\hat x(\phi)$ $\Ph-a.s$.
\end{proof}
\subsection{The limit of $\Lambda_n(h,\cdot)$ and $A_n(h,\cdot)$}
Here we apply the Lemma \ref{lem:sollimits} to the model considered in this manuscript. We first study the analytical properties of the function $G$ and $G^*$. Then, we study the convergence of the random fields $M_n$, $M^*_n$ and other relevant random quantities. Thus, we study the convergence of the random fields $\Lambda_n$ and $A_n$, defined in \eqref{eq:Mstat} and \eqref{eq:Mstarstat}.

The derivatives of $G$ are given by
\been{
\label{eq:dersG}
G'(\lambda)=\res \E[M'_n(h,\lambda)]=\E[h_1 g'(\lambda h_1)],\quad G''(\lambda)=\res \E[M''_n(h,\lambda)]=\E[h^2_1 g''(\lambda h_1)]
}
We first state the following convergence result.
\begin{lem}
\label{lem:limits}
Under Assumption~\ref{assum:1}, the following limits hold $\Ph$-almost surely:
\begin{enumerate}
\item $\res \Sigma_{n}(h) \xrightarrow{n\to\infty} \varsigma$;
\item $\res \Gamma_{n}(h) \xrightarrow{n\to\infty} \gamma$;
\item $\res M_{n}(h,\lambda) \xrightarrow{n\to\infty} G(\lambda)$;
\item $\res M'_n(h,\lambda) \xrightarrow{n\to\infty} G'(\lambda)$;
\item $\res M''_n(h,\lambda) \xrightarrow{n\to\infty} G''(\lambda)$.
\end{enumerate}
Moreover, for $\lambda\geq 0$, we have
\been{
0 \le G(\lambda) \le 2 |\lambda|\psi_3, \quad 0 \le G'(\lambda) \le 2 \psi_3.
}
\end{lem}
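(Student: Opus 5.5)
The plan is to write each of the five quantities as a normalized sum of i.i.d. random variables and apply the Strong Law of Large Numbers (SLLN) to each. Integrability in every case comes from Assumption \ref{assum:1} combined with the pointwise bounds of Lemma \ref{lem:bounds}. For a fixed $\lambda$, each limit holds on an event of full $\Ph$-probability. Since the statement concerns fixed $\lambda$, no uniformity in $\lambda$ is needed here.

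\textbf{Items (1) and (2).} By \eqref{eq:defJGamma}, $\Sigma_n(h)=\sum_{i=1}^n(|h_i|-mh_i)$. Each summand is i.i.d. with $\E\big[\,\big||h_1|-mh_1\big|\,\big]\le 2\psi_3<\infty$, so the SLLN gives $\res\Sigma_n(h)\to \psi_3-m\psi_1=\varsigma$. Similarly, $\Gamma_n(h)=\sum_{i=1}^n\big(\log 2-\log(1+\sign(h_i)m)\big)$ has bounded i.i.d. summands, because $m\in(-1,1)$ and $\sign(h_i)\in\{-1,0,1\}$. The SLLN then yields the limit $\gamma$.

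\textbf{Items (3)--(5).} Fix $\lambda$. We have $M_n(h,\lambda)=\sum_i g(\lambda h_i)$, $M'_n(h,\lambda)=\sum_i h_ig'(\lambda h_i)$ and $M''_n(h,\lambda)=\sum_i h_i^2g''(\lambda h_i)$. These are sums of i.i.d. measurable functions of the $h_i$. By \eqref{eq:g2UB}, the summands are dominated respectively by
\been{
0\le g(\lambda h_1)\le 2|\lambda||h_1|,\qquad |h_1g'(\lambda h_1)|\le 2|h_1|,\qquad 0<h_1^2g''(\lambda h_1)\le h_1^2 .
}
These bounds are integrable by the moment assumptions $\psi_3,\psi_2<\infty$. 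The SLLN therefore gives the limits $\E[g(\lambda h_1)]=G(\lambda)$, $\E[h_1g'(\lambda h_1)]$ and $\E[h_1^2g''(\lambda h_1)]$.

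It remains to identify the last two limits with $G'$ and $G''$, as in \eqref{eq:dersG}, which requires differentiating under the expectation. The derivatives of $\lambda\mapsto g(\lambda h_1)$ are $h_1g'(\lambda h_1)$ and $h_1^2g''(\lambda h_1)$. These are dominated uniformly in $\lambda$ by $2|h_1|$ and $h_1^2$, both integrable. Dominated convergence applied to difference quotients, via the mean value theorem, therefore justifies $G'(\lambda)=\E[h_1g'(\lambda h_1)]$ and $G''(\lambda)=\E[h_1^2g''(\lambda h_1)]$. This interchange is the only point needing care, and it is the step I would check most closely.

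\textbf{The final bounds.} Take the expectation of the first two inequalities of \eqref{eq:g2UB}. This gives $0\le G(\lambda)\le 2|\lambda|\E|h_1|=2|\lambda|\psi_3$. For $\lambda\ge0$, note $\sign(\lambda h_1)=\sign(h_1)$. Hence $h_1g'(\lambda h_1)=|h_1|\,\sign(\lambda h_1)g'(\lambda h_1)\in[0,2|h_1|]$ whenever $\lambda h_1\neq0$; when $\lambda h_1=0$ the term vanishes. Taking expectations gives $0\le G'(\lambda)\le 2\psi_3$.
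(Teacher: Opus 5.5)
Your proof is correct and follows the paper's argument: the Strong Law of Large Numbers applied to each i.i.d. sum, with integrability from Lemma~\ref{lem:bounds} and the moment assumptions, and the final inequalities obtained by taking expectations in \eqref{eq:g2UB}. Your dominated-convergence justification of $G'(\lambda)=\E[h_1g'(\lambda h_1)]$ and $G''(\lambda)=\E[h_1^2g''(\lambda h_1)]$ fills in a step that the paper takes for granted in \eqref{eq:dersG}.
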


\begin{proof}
The quantities $\Sigma_{n}(h)$ and $\Gamma_{n}(h)$ are sums of $n$ i.i.d. random variables which, under Assumption~\ref{assum:1}, are integrable. Therefore, the Strong Law of Large Numbers yields $(1)$ and $(2)$.

Similarly, $M_{n}(h,\lambda)$, $M'_{n}(h,\lambda)$, and $M''_{n}(h,\lambda)$ are sums of $n$ i.i.d. random variables. By the bounds established in Lemma~\ref{lem:bounds}, their summands are integrable, and the Strong Law of Large Numbers therefore gives the corresponding convergences as well.

Finally, the bounds on $g$ and $g'$ in Lemma~\ref{lem:bounds} imply the corresponding inequalities for $G(\lambda)$ and $G'(\lambda)$.
\end{proof}
The functions $G$ and $G^*$ inherit the structural properties established for $M_n(h,\cdot)$ and $M^*_n(h,\cdot)$ stated in Lemma \ref{lem:exists}. As before, we denote the derivative of $G^*$ by $\dot G^*$.
\begin{lem}
\label{lem:exists2}
The function $G$ is continuous, twice differentiable, and verifies the following
\begin{enumerate}
\item $G(0)=G'(0)=0$;
\item $G''(\lambda)>0$ for any $\lambda\in \R$;
\item $\{G'(\lambda):\, \lambda \in \R_{> 0}\}= (0,\varsigma)$;
\item there exists a continuous and strictly increasing function $\hat \lambda:[0,\varsigma)\to \R_{\geq 0}$ such that
\been{
\label{eq:statG}
 G'(\hat \lambda(a))= a,\qquad \forall a\in [0,\varsigma).
}
Moreover $\hat\lambda(0)=0$ and $\hat\lambda((0,\varsigma))=\R_{> 0}$.
\end{enumerate}
The $\FLT$ $G^*$ satisfies
\begin{enumerate}
\setcounter{enumi}{4}
\item for any $a\in (0,\varsigma)$
\been{
\label{eq:Gstarstat}
G^*(a)=a \hat \lambda(a)- G(\hat \lambda(a)),\qquad \dot G^*(a)=\hat \lambda(a);
}
\item $G^*$ is strictly increasing in $[0,\varsigma)$;
\item $\{G^*(a):\, a\in (0,\varsigma)\}=(0,\gamma)$
\item there exists a continuous increasing function $\hat a:(0,\gamma)\to (0,\varsigma)$ such that
\been{
\label{eq:ga2}
G^*(\hat a(c))=c,\quad \forall c\in (0,\gamma).
}
\end{enumerate}
\end{lem}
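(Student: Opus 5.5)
The plan is to mirror the proof of Lemma \ref{lem:exists}, replacing finite sums over $i\le n$ by expectations over $h_1$. We use the bounds of Lemma \ref{lem:bounds} and Assumption \ref{assum:1} to justify differentiating under $\E$.

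\emph{Regularity of $G$.} Since $|g'|\le 2$ and $0<g''<1$, the derivatives $\partial_\lambda g(\lambda h_1)=h_1 g'(\lambda h_1)$ and $\partial^2_\lambda g(\lambda h_1)=h_1^2 g''(\lambda h_1)$ are dominated by $2|h_1|$ and $h_1^2$. Both are integrable because $\psi_3,\psi_2<\infty$. Dominated convergence then gives that $G$ is twice differentiable, with the formulas \eqref{eq:dersG}, and that $G''$ is continuous; continuity of $G$ follows from $|g(\lambda h_1)|\le 2|\lambda||h_1|$.

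\emph{Claims (1)--(4).} Claim (1) follows from $g(0)=g'(0)=0$. For Claim (2), $h_1^2g''(\lambda h_1)>0$ whenever $h_1\neq 0$, and $\Ph(h_1\neq0)>0$ by the absolutely continuous part in Assumption \ref{assum:1}, so $G''(\lambda)>0$. For Claim (3), $G'$ is strictly increasing and continuous with $G'(0)=0$. The limit $\lim_{\lambda\to\infty}G'(\lambda)$ is computed by monotone or dominated convergence: $h_1g'(\lambda h_1)\to |h_1|-mh_1$ pointwise, this convergence is monotone in $\lambda$ (as in the proof of Lemma \ref{lem:nonempty}), and it is dominated by $2|h_1|$. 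Hence $\lim_{\lambda\to\infty}G'(\lambda)=\psi_3-m\psi_1=\varsigma$. Claim (4) follows by defining $\hat\lambda$ as the inverse of $G'$ restricted to $\R_{\ge0}$.

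\emph{Claims (5)--(8).} The supremum defining $G^*(a)$ is attained at the unique stationary point $\hat\lambda(a)$, because $\lambda\mapsto \lambda a-G(\lambda)$ is strictly concave. The implicit function theorem applies since $G''>0$, and the envelope computation of Claim (5) of Lemma \ref{lem:exists} gives $\dot G^*=\hat\lambda$. Since $\hat\lambda>0$ on $(0,\varsigma)$, $G^*$ is strictly increasing there (Claim (6)), and $G^*(0)=-G(0)=0$. For Claim (7) it then suffices to prove $\lim_{a\uparrow\varsigma}G^*(a)=\gamma$. Write
\[
\lambda\varsigma-G(\lambda)=\log 2-\E\big[\log\big((1+\sign(h_1)m)+(1-\sign(h_1)m)e^{-2\lambda|h_1|}\big)\big].
\]
This is increasing in $\lambda$ and tends to $\gamma$ by monotone convergence (the case $h_1=0$ has probability zero). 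The limit $G^*(a)\uparrow\gamma$ as $a\uparrow\varsigma$ follows from two inequalities. For the upper bound, $G^*(a)\le\sup_\lambda(\lambda\varsigma-G(\lambda))=\gamma$ for $a<\varsigma$ and $\lambda\ge0$; negative $\lambda$ contribute less, since $G\ge0$. For the lower bound, $\liminf_{a\uparrow\varsigma}G^*(a)\ge \lambda\varsigma-G(\lambda)$ for each fixed $\lambda$; letting $\lambda\to\infty$ gives $\ge\gamma$. Continuity and strict monotonicity of $G^*$ then give the image $(0,\gamma)$, and Claim (8) defines $\hat a$ as the inverse of $G^*$.

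\emph{Main obstacle.} The main difficulty is the interchange of limits and expectations at the boundary, namely $\lambda\to\infty$ for $G'$ and for $\lambda\varsigma-G(\lambda)$. This is handled by monotonicity in $\lambda$ and by the integrability of $|h_1|$ and of $\log(1\pm m)$, which is bounded since $|m|<1$.
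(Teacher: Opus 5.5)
Your proposal is correct and follows essentially the same route as the paper: mirror Lemma \ref{lem:exists} with expectations in place of finite sums, get $\lim_{\lambda\to\infty}G'(\lambda)=\varsigma$ by monotone/dominated convergence, and identify the range of $G^*$ by sending $\lambda\to\infty$ in $\lambda\varsigma-G(\lambda)$. Two of your steps add rigour the paper leaves implicit: the two-sided bound proving $\lim_{a\uparrow\varsigma}G^*(a)=\sup_\lambda(\lambda\varsigma-G(\lambda))=\gamma$ (which the paper only asserts), and the remark that $\Ph(h_1=0)=0$ is needed to obtain $\gamma$ in that limit.
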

\begin{proof}
The continuity and the differentiability follow from the well-posedness and finiteness of the expectation values in \eqref{eq:dersG}. We now proceed to prove the remaining claims separately.
\begin{proof}[Proof of Claim (1)]The equality \eqref{eq:g2trivial}, the definition \eqref{eq:defGG2}, and \eqref{eq:dersG} yield the claim.
\end{proof}
\begin{proof}[Proof of Claim (2)]
By Lemma \ref{lem:bounds}, $h_1^2g''(\lambda h_1)>0$ for any $h_1\neq 0$. Hence \eqref{eq:dersG} and Assumption \ref{assum:1} yield the claim.
\end{proof}
\begin{proof}[Proof of Claim (3)]
Since $G''(\lambda)>0$, then $G'$ is strictly increasing. So $\inf_{\lambda \in \mathbb{R}_{\ge 0}} G'(\lambda)= G'(0) = 0$. Moreover, since $M'_n(h,\cdot)$ is also increasing, the Monotone Convergence Theorem together with \eqref{eq:dersG} yields
\been{
\sup_{\lambda \in \mathbb{R}} G'(\lambda) = \lim_{\lambda \to \infty} G'(\lambda) 
= \lim_{\lambda \to \infty} \res \E[M'_n(h,\lambda)] 
= \res \E\Big[\lim_{\lambda \to \infty} M'_n(h,\lambda)\Big] 
= \res \E[\Sigma_n(h)] = \varsigma.
}
\end{proof}
\begin{proof}[Proof of Claim (4)]
The Claim $(1)$, $(2)$ and $(3)$ of this Lemma imply that the restriction $G':[0,\infty)\to [0,\varsigma)$ is invertible. So, we define
$\hat{\lambda} := (G')^{-1} : [0,\varsigma) \to [0,\infty)$, which is continuous and strictly increasing, since $G'$ is continuous and strictly increasing. By definition, $\hat{\lambda}(a)$ is the unique solution in $\mathbb{R}_{>0}$ of \eqref{eq:statG}. Moreover, since $G'(0)=0$, $\hat{\lambda}(0)=0$, and, since $\hat{\lambda}$ is strictly increasing, $\hat{\lambda}(a)>0$ for $a>0$.
\end{proof}
\begin{proof}[Proof of Claim (5)]
Since the function $\R:\lambda\mapsto \lambda a-G(\lambda)$ is strictly concave and differentiable, the stationary point is also the supremum. Moreover, since $G''(\lambda)>0$ for any $\lambda\in \R$, by the Implicit Function Theorem the function $a\mapsto \hat \lambda(a)$ is differentiable.

 Consequently
\been{
\dot G^*(a)=\dot{\hat \lambda}(a)\(a-G'(\hat \lambda(a))\)+\hat \lambda(a)=\hat \lambda(a)>0,\quad \forall a\in (0,\varsigma).
}
\end{proof}
\begin{proof}[Proof of Claim (6)]
Claim $(4)$ and Claim $(5)$ of this Lemma prove this Claim.
\end{proof}
\begin{proof}[Proof of Claim (7)]
Since $G^*$ is increasing in $(0,\varsigma)$, we have
\been{
\inf_{a\in [0,\varsigma)}G^*(a)=G^*(0)=-G(\hat \lambda(0))=0
}
and
\been{
\aled{
\sup_{a\in [0,\varsigma)}G^*(a)&=\lim_{a\to \varsigma}G^*(a)=\sup_{\lambda\in \R}(\varsigma \lambda-G(\lambda))\\&=\sup_{\lambda\in \R}\(\lambda \E[|h_1|]-\E[\log((1+m) e^{\lambda h_1}+(1-m)e^{-\lambda h_1})]+\log(2)\)\\
&=\sup_{\lambda\in \R}\(-\E[\log((1+\sign(h_1)m)+(1-\sign(h_1)m)e^{-2\lambda|h_1|})]\)+\log(2).
}
}
Hence, the supremum in $\lambda$ is achieved by taking the limit $\lambda\to \infty$
\been{
\aled{
&\sup_{\lambda\in \R}\(-\E[\log((1+\sign(h_1)m)+(1-\sign(h_1)m)e^{-2\lambda|h_1|})]+\log(2)\)\\
&=-\E[\log(1+\sign(h_1)m)]+\log(2)=\gamma.
}
}
\end{proof}
\begin{proof}[Proof of Claim (8)]
By Claim $(4)$ and Claim $(5)$ of this lemma, the function $a\mapsto G^*(a)$ has a strictly positive derivative for $a\in (0,\varsigma)$. Hence it is invertible from $(0,\varsigma)$ to $G^*((0,\varsigma))=(0,\gamma)$, where the last equality is proved in Claim $(7)$ of this lemma.
\end{proof}
\end{proof}
We now show that, in the limit $n \to \infty$, the solution of the system of equations \eqref{eq:Gstareq} is determined from the functions $G^*$ and $G'$. In the following, given $a\in \R$, we denote by $\Lambda_n(h,a)$ the solution to \eqref{eq:Mstat}, for a fixed $h\in \R^{\N}$ and $n\in \N$, and by $\hat \lambda(a)$ the solution to \eqref{eq:statG}.

We use the above lemma to prove the following two convergence results.
\begin{lem}
\label{lem:convLnA}
Given $a\in (0,\varsigma)$ and a $h-$measurable sequence $(A_n)_{n\in \N}$ such that
\been{
\label{eq:conva}
\lim_{n\to \infty}\res A_n(h)=a,\qquad \gas
}
we have
\been{
\label{eq:conveta}
\lim_{n\to \infty}\Lambda_n(h,A_n(h))=\hat \lambda(a),\qquad \gas
}
and 
\been{
\label{eq:convGstar}
\lim_{n\to \infty} \res M^*_n(h,A_n(h))=G^*(a),\qquad \gas.
}
\end{lem}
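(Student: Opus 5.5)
We will derive both limits from Lemma \ref{lem:sollimits}, applied with $m_n=1$ to the random fields $F_n(h,\lambda):=M'_n(h,\lambda)$ on $\R_{>0}$ and the deterministic limit $f:=G'$, with $\Phi_n:=A_n$. We then read \eqref{eq:convGstar} off from the Legendre identity \eqref{eq:Mstarstat}.

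\textbf{Checking the hypotheses.} By Lemma \ref{lem:exists}, for $h\neq0$ the map $M'_n(h,\cdot)$ is finite, continuous and strictly increasing on $\R_{>0}$, so $\mathfrak{X}_n(h)=\R_{>0}$ and $\mathfrak{F}_n(h)=(0,\Sigma_n(h))$. Likewise, Lemma \ref{lem:exists2} gives $\mathfrak{X}=\R_{>0}$ and $\mathfrak{F}=(0,\varsigma)$, with $G'$ continuous and strictly increasing. Pointwise almost sure convergence $\res M'_n(h,\lambda)\to G'(\lambda)$ is Lemma \ref{lem:limits}(4). There is a scaling subtlety: Lemma \ref{lem:sollimits} normalises by $\res F_n$ and $\res\Phi_n$. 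With $m_n=1$ the equation $F_n(h,X_n)=\Phi_n$ is scale invariant, so the sandwich argument in its proof applies directly to the normalised quantities.

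Next we check that $\Phi_n\in\mathfrak{F}_n(h)$ eventually almost surely. Since $\res A_n\to a\in(0,\varsigma)$ and $\res\Sigma_n\to\varsigma$ almost surely by Lemma \ref{lem:limits}(1), we have $0<A_n(h)<\Sigma_n(h)$ for all large $n$ almost surely. Lemma \ref{lem:sollimits}(3) then gives $X_n(h,A_n(h))\to\hat x(a)$. By uniqueness of the root, $X_n(h,A_n(h))=\Lambda_n(h,A_n(h))$ and $\hat x(a)=\hat\lambda(a)$, which proves \eqref{eq:conveta}.

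\textbf{The Legendre limit.} By \eqref{eq:Mstarstat},
\[
\res M^*_n(h,A_n)=\Lambda_n(h,A_n)\,\res A_n-\res M_n\(h,\Lambda_n(h,A_n)\).
\]
The first term tends to $\hat\lambda(a)a$. For the second, pointwise convergence from Lemma \ref{lem:limits}(3) is not enough, because the argument $\Lambda_n(h,A_n)$ is random and moving. We upgrade it to locally uniform convergence, which is the main obstacle.

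We first try equicontinuity. By \eqref{eq:g2UB}, $|\partial_\lambda\res M_n(h,\lambda)|\le 2\res\sum_i|h_i|$, and this is bounded almost surely by the SLLN since $\psi_3<\infty$. So the functions $\res M_n(h,\cdot)$ are almost surely uniformly Lipschitz. Now restrict to a countable dense set of $\lambda$, on which almost sure convergence holds simultaneously. Uniform Lipschitz continuity then yields uniform convergence on compacts. Alternatively, the functions are convex, and pointwise convergence of convex functions on a dense set is locally uniform.

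Hence $\res M_n(h,\Lambda_n(h,A_n))\to G(\hat\lambda(a))$, and \eqref{eq:Gstarstat} gives
\[
\res M^*_n(h,A_n)\to a\hat\lambda(a)-G(\hat\lambda(a))=G^*(a),
\]
which is \eqref{eq:convGstar}.
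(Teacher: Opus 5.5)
Your proposal is correct and follows the paper's route: Lemma~\ref{lem:sollimits} with $F_n=M'_n$, $f=G'$, $m_n=1$, $\Phi_n=A_n$ gives \eqref{eq:conveta}, and \eqref{eq:convGstar} then follows from the Legendre identity \eqref{eq:Mstarstat} plus a Lipschitz bound on $\res M_n(h,\cdot)$. The differences are cosmetic. The paper skips your locally uniform upgrade and simply writes $\res|M_n(h,\Lambda_n(h,A_n))-M_n(h,\hat\lambda(a))|\le \res\Sigma_n(h)\,|\Lambda_n(h,A_n)-\hat\lambda(a)|$, combined with pointwise convergence at the single point $\hat\lambda(a)$. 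Also, with $m_n=1$, Lemma~\ref{lem:sollimits} applies verbatim, so the scaling subtlety you flag does not arise.
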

\begin{proof}
If $a\in (0,\varsigma)$, then, since $\res \Sigma_n(h)\xrightarrow{n\to \infty} \varsigma$ $\Gas$,
\been{
A_n(h)\in (0,\Sigma_n(h)),\quad \geas.
}
To apply Lemma~\ref{lem:sollimits}, set
\been{
\mathfrak{X}_n=\mathfrak{X}=\R_{>0},\quad \mathfrak{F}_n=(0,\Sigma_n(h)),\quad \mathfrak{F}=(0,\varsigma),\quad F_n=M'_n,\quad f=G',\quad m_n= 1,\quad \Phi_n=A_n,\quad \phi=a,
}
and
\been{
X_n(h,\Phi_n(h))=\Lambda_n(h,A_n(h)),\quad \hat{x}(\phi)=\hat \lambda(a).
}
Then, by Lemmas~\ref{lem:exists}, \ref{lem:limits}, and \ref{lem:exists2}, all assumptions of Lemma~\ref{lem:sollimits} are verified, proving \eqref{eq:conveta}.

For \eqref{eq:convGstar}, the equations \eqref{eq:Mstarstat} and \eqref{eq:Gstarstat} and the triangular inequality give
\been{
\aled{
\left|\res M^*_n(h,A_n(h))-G^*(a)\right|&=\left| \res \(A_n(h)\Lambda_n(h,A_n(h))-M_{n}(h,\Lambda_n(h,A_n(h)))\)-\(a\hat \lambda(a)-G(\hat \lambda(a))\)\right|\\
&\leq \res A_n(h)|\Lambda_n(h,A_n(h))-\hat \lambda(a)|+\left|\res A_n(h)-a\right|\hat \lambda(a)\\
&+\res\left| M_{n}(h,\Lambda_n(h,A_n(h)))- M_{n}(h,\hat \lambda(a)) \right|+\left|\res M_{n}(h,\hat \lambda(a))- G(\hat \lambda(a))\right|
}
}
By the Claim $(3)$ of Lemma \ref{lem:exists}
\been{
\aled{
\res|M_{n}(h,\Lambda_n(h,A_n(h)))- M_{n}(h,\hat \lambda(a))|&\leq |\Lambda_n(h,A_n(h))-\hat\lambda(a)|\res \sup_{\lambda\geq 0}|M'_n(h,\lambda)|\\
&\leq \res \Sigma_n(h)|\Lambda_n(h,A_n(h))-\hat\lambda(a)|.
}
}
So the convergences \eqref{eq:conva}, \eqref{eq:conveta}, and Claim $(1)$ and $(3)$ of Lemma \ref{lem:limits} prove the limit \eqref{eq:convGstar}.
\end{proof}
In the following, given $c\in \R$, we denote by $A_n(h,c)$ the solution to \eqref{eq:GA}, for a fixed $h\in \R^{\N}$ and $n\in \N$, and by $\hat a(c)$ the solution to \eqref{eq:ga2}.
\begin{lem}
\label{lem:convAnC}
Given $c\in (0,\gamma)$ and a $h-$measurable sequence $(C_n)_{n\in \N}$ such that
\been{
\label{eq:conva2}
\lim_{n\to \infty}\res C_n(h)=c,\quad \gas
}
we have
\been{
\label{eq:conveta2}
\lim_{n\to \infty}\res A_n(h,C_n(h))=\hat{a}(c),\qquad \gas
}
\end{lem}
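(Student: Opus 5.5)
The plan is to apply Lemma \ref{lem:sollimits} directly, in the same way Lemma \ref{lem:convLnA} was obtained, this time with the Legendre transform playing the role of the random field. I would set
\[
F_n(h,a):=M^*_n(h,a),\qquad f:=G^*,\qquad m_n:=n,\qquad \Phi_n:=C_n,\qquad \phi:=c .
\]
The finiteness domains are handled by extending $M^*_n(h,\cdot)$ by $+\infty$ beyond $\Sigma_n(h)$, and $G^*$ by $+\infty$ beyond $\varsigma$. With this convention the relevant sets are $\mathfrak{X}=(0,\varsigma)$, $\mathfrak{F}=(0,\gamma)$, $\mathfrak{X}_n(h)\supseteq(0,\Sigma_n(h))$ and $\mathfrak{F}_n(h)\supseteq(0,\Gamma_n(h))$. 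To avoid any issue at the endpoint $\Sigma_n(h)$, I would simply use $\mathfrak{X}_n(h)=(0,\Sigma_n(h))$ and $\mathfrak{F}_n(h)=(0,\Gamma_n(h))$, which are the sets given by Claims (6)--(7) of Lemma \ref{lem:exists}. Once the hypotheses are checked, the unique root $X_n(h,C_n(h))$ is exactly $A_n(h,C_n(h))$ by Claim (8), and $\hat x(c)=\hat a(c)$ by Claim (8) of Lemma \ref{lem:exists2}. Conclusion (3) of Lemma \ref{lem:sollimits} then gives \eqref{eq:conveta2}.

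The hypotheses are checked in the following order.

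First, $M^*_n(h,\cdot)$ is continuous and strictly increasing on $(0,\Sigma_n(h))$ for $h\neq0$. This is Claims (5)--(6) of Lemma \ref{lem:exists}, and $h\neq 0$ holds eventually almost surely by Assumption \ref{assum:1}. Likewise $G^*$ is continuous and strictly increasing on $(0,\varsigma)$ by Claims (5)--(6) of Lemma \ref{lem:exists2}.

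Second, fix $a\in(0,\varsigma)$. Since $\res\Sigma_n(h)\to\varsigma$ almost surely (Lemma \ref{lem:limits}), we have $na\in(0,\Sigma_n(h))$ eventually almost surely.

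Third, the pointwise limit $\res M^*_n(h,na)\to G^*(a)$ almost surely follows from \eqref{eq:convGstar} in Lemma \ref{lem:convLnA}, applied with the deterministic sequence $A_n=na$.

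Fourth, $C_n(h)\in(0,\Gamma_n(h))$ eventually almost surely. This holds because $\res C_n\to c\in(0,\gamma)$ and $\res\Gamma_n\to\gamma$. If the sequence is only assumed to satisfy \eqref{eq:conva2}, positivity follows from $c>0$ and the upper bound from $c<\gamma$.

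The one point needing care is measurability and the null sets. Lemma \ref{lem:sollimits} requires the almost-sure convergence in \eqref{eq:Pae} for each fixed $x$. Its proof only uses the two points $\hat x(\phi_{\pm\e})$ for a countable family of $\e$'s, for instance $\e=1/k$. So a countable union of null sets suffices, and I expect no real obstacle there.

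I would also remark that the monotonicity of $M^*_n$ gives the sandwich argument directly. For this reason I do not expect the main obstacle to be analytic. It is rather the bookkeeping of ensuring that $\hat a(c\pm3\e)$ stays inside $(0,\varsigma)$, which holds because $(0,\gamma)$ is open. A possible alternative, if one prefers to avoid Lemma \ref{lem:sollimits}, is the explicit sandwich. Choose $a_\pm=\hat a(c\pm\delta)$. By \eqref{eq:convGstar}, eventually $M^*_n(h,na_-)<C_n(h)<M^*_n(h,na_+)$, and monotonicity then gives $na_-\le A_n(h,C_n(h))\le na_+$. Letting $\delta\downarrow0$ along rationals and using the continuity of $\hat a$ gives the result.
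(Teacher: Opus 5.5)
Your proposal is correct and follows the paper's proof essentially verbatim. The paper likewise applies Lemma \ref{lem:sollimits} with $F_n$ the $+\infty$-extension of $M^*_n(h,\cdot)$ beyond $\Sigma_n(h)$, $f=G^*$, $m_n=n$, $\Phi_n=C_n$, $\phi=c$, $\mathfrak{X}_n=(0,\Sigma_n(h))$ and $\mathfrak{F}_n=(0,\Gamma_n(h))$, checking the pointwise hypothesis via \eqref{eq:convGstar} and monotonicity and continuity via Lemmas \ref{lem:exists} and \ref{lem:exists2}; your remarks on countably many null sets and the explicit sandwich just unpack what the proof of Lemma \ref{lem:sollimits} already does.
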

\begin{proof}
If $c\in (0,\gamma)$, then, since $\res \Gamma_n(h)\xrightarrow{n\to \infty} \gamma$, $\Gas$, by \eqref{eq:conva2}
\been{
C_n(h)\in (0,\Gamma_n(h)),\quad \geas.
} 
Let us define
\been{
\bar M^*_n(h,a):=
\begin{cases}
     M^*_n(h,a),\quad &\textup{if }a\in [0,\Sigma_n(h)];\\
    \infty,\quad &\textup{if }a>\Sigma_n(h).
\end{cases}
}
To apply Lemma~\ref{lem:sollimits}, set
\been{
\mathfrak{X}_n=(0,\Sigma_n(h)),\quad \mathfrak{X}=(0,\varsigma),\quad \mathfrak{F}_n=(0,\Gamma_n(h)),\quad \mathfrak{F}=(0,\gamma),\quad F_n=\bar M^*_n,\quad f=G^*,\quad m_n=n,
}
and
\been{
\Phi_n=C_n,\quad \phi=c,\quad X_n(h,\Phi_n(h))=A_n(h,C_n(h)),\quad \hat{x}(\phi)=\hat{a}(c).
}
Then, by Lemmas~\ref{lem:exists}, equation \eqref{eq:convGstar} in Lemma \ref{lem:convLnA}, and \ref{lem:exists2}, all assumptions of Lemma~\ref{lem:sollimits} are verified, proving \eqref{eq:conveta2}.
\end{proof}
\subsection{Proof of Theorem \ref{them:0}}
Lemma \ref{lem:exists2} proves the existence of the solution of \eqref{eq:solac0}. If $c\in (0,\gamma)$ and $m\in (-1,1)$, there exists $\e>0$ such that:
\been{
c\in (2\e\gamma,(1-2\e)\gamma),\qquad m\in [-1+2\e,1-2\e].
}
Consider a sequence of random variables $(C_n)_{n\in \N}$ verifying \eqref{eq:CcondREM0}. By \eqref{eq:CcondREM0}, Lemma \ref{lem:limits}, and the choice of $\e$, we have
\been{
C_n(h)\in (\e\Gamma_n(h),(1-\e)\Gamma_n(h)),\quad \geas.
}
From $C_n$, define $C^+_n$ and $C^-_n$ as in \eqref{eq:defCpm}, and define $A^{-}_n$ and $A^{+}_n$ as in \eqref{eq:defApm}.

Given $\lambda^*>1$, let $N_{\e}$, $\mathfrak{L}^{\e}_{n}$, and $\mathfrak{R}_{n,\lambda^*}(h)$ be the objects defined in Proposition \ref{prop:finiteREM}. Let
\been{
\bar{\mathfrak{L}}^{\e}_N:=\bigcap^{\infty}_{n=N}\mathfrak{L}^{\e}_{n},\qquad \bar{\mathfrak{R}}_{N,\lambda^*}(h):=\bigcap^{\infty}_{n=N}\mathfrak{R}_{n,\lambda^*}(h).
}
If $h\in  \bar{\mathfrak{L}}^{\e}_N$ and $x\in \bar{\mathfrak{R}}_{N,\lambda^*}(h)$ then the setting of Proposition \ref{prop:finiteREM} is eventually verified. Thus, in this set, we can evaluate the $n\to \infty$ limit of the bounds \eqref{eq:LBP} and \eqref{eq:UBP}. Hence, we take the limit $N\to \infty$ and show that $\bar{\mathfrak{L}}^{\e}_N$ converges to a set of probability $1$ and $ \bar{\mathfrak{R}}_{N,\lambda^*}(h)$ converges to $\R$.

In the following, given $N>N_{\e}$, $h\in \bar{\mathfrak{L}}^{\e}_N$, $x\in \bar{\mathfrak{R}}_{N,\lambda^*}(h)$, and $n>N$, we denote by $(\tilde A_n(h),\tilde \Lambda_n(h),\tilde \Lambda^{x}_n(h))$ a solution of \eqref{eq:Gstareq}. 

We split the proof in several lemmas. We first compute the limit $n\to \infty$ of the solution $(\tilde A_n(h),\tilde \Lambda_n(h),\tilde \Lambda^{x}_n(h))$
\begin{lem}
We have
\been{
\label{eq:convAnCCC0}
\lim_{n\to \infty}\res A^{-}_n(h)=\lim_{n\to \infty}\res A^{+}_n(h)=\lim_{n\to \infty}\res \( A^{-}_n(h)+x\)=\lim_{n\to \infty}\res \( A^{+}_n(h)+x\)=\hat{a}(c),\quad \gas ,
}
for any fixed $x\in \R$.

Moreover, given any $N> N_{\e}$, for $\Gae$ $h\in \bar{\mathfrak{L}}^{\e}_N$ and any fixed $x\in \bar{\mathfrak{R}}_{N,\lambda^*}(h)$, 
\been{
\label{eq:limitsfinal}
\lim_{n\to \infty}\res \tilde A_n(h)=\hat{a}(c),\qquad \lim_{n\to \infty}\tilde \Lambda_n(h)=\lim_{n\to \infty}\tilde \Lambda^{x}_n(h)=\hat{\lambda}(\hat{a}(c)).
}
\end{lem}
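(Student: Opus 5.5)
The plan is to reduce everything to the two convergence lemmas already proved, Lemma \ref{lem:convAnC} and Lemma \ref{lem:convLnA}, combined with monotonicity and a sandwich argument.

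\textbf{Limits of $A^\pm_n$.} First I would note that $C^{\pm}_n(h)=C_n(h)+O(\log n)$, so by \eqref{eq:CcondREM0} we have $\res C^{\pm}_n(h)\to c$ $\Gas$. Since $c\in(0,\gamma)$, Lemma \ref{lem:convAnC} applied to the $h$-measurable sequences $C^{-}_n$ and $C^{+}_n$ gives $\res A^{\pm}_n(h)=\res A_n(h,C^\pm_n(h))\to \hat a(c)$ $\Gas$. For fixed $x\in\R$ we have $\res x\to 0$, so $\res(A^\pm_n(h)+x)\to\hat a(c)$ as well. This proves \eqref{eq:convAnCCC0}.

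\textbf{Limit of $\tilde A_n$ and of the $\Lambda$'s.} For $h\in\bar{\mathfrak L}^\e_N$ and $x\in\bar{\mathfrak R}_{N,\lambda^*}(h)$, Lemma \ref{lem:existsol} holds for every $n>N$. It gives
\[
A^-_n(h)\le\tilde A_n(h)\le A^+_n(h),\qquad \Lambda^{-}_n(h)\le\tilde\Lambda_n(h)\le\Lambda^{+}_n(h),\qquad \Lambda^{-,x}_n(h)\le\tilde\Lambda^x_n(h)\le\Lambda^{+,x}_n(h).
\]
Squeezing with \eqref{eq:convAnCCC0} yields $\res\tilde A_n(h)\to\hat a(c)$ for almost every such $h$.

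For the $\Lambda$'s, recall $\Lambda^{\pm}_n(h)=\Lambda_n(h,A^\pm_n(h))$ and $\Lambda^{\pm,x}_n(h)=\Lambda_n(h,A^\pm_n(h)+x)$. Each of $A^\pm_n$ and $A^\pm_n+x$ is an $h$-measurable sequence whose normalization $\res(\cdot)$ converges a.s.\ to $a:=\hat a(c)\in(0,\varsigma)$; the containment $a\in(0,\varsigma)$ follows from Lemma \ref{lem:exists2}(8). Lemma \ref{lem:convLnA} then gives convergence of all four of these to $\hat\lambda(\hat a(c))$ $\Gas$, and the sandwich gives \eqref{eq:limitsfinal}.

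\textbf{Main obstacle.} The delicate point is measurability and almost-sure bookkeeping. The lemma requires the limits to hold for almost every $h$ in $\bar{\mathfrak L}^\e_N$ and every admissible $x$, and $\bar{\mathfrak R}_{N,\lambda^*}(h)$ depends on $h$. I would handle this by intersecting countably many full-measure events: one for $A^\pm_n$, and one for each $x$ rational, giving convergence of $\Lambda_n(h,A^\pm_n+x)$. An arbitrary fixed $x$ is then handled by monotonicity of $\Lambda_n(h,\cdot)$ in its second argument. For rationals $q_1\le x\le q_2$ we have $\Lambda^{-,q_1}_n\le\Lambda^{-,x}_n\le\Lambda^{+,x}_n\le\Lambda^{+,q_2}_n$. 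Although the $q_i$ may lie outside $\mathfrak R_n$, they are eventually inside $(0,\Sigma_n)$ because $\res A^\pm_n\to\hat a(c)<\varsigma$. All the bracketing sequences converge to the same limit, so the result holds on a single full-measure set simultaneously for all $x$. The $h$-measurability of $A^\pm_n$ needed to invoke Lemmas \ref{lem:convAnC} and \ref{lem:convLnA} follows from the continuity of $A_n(h,\cdot)$ in $c$ together with the measurability of $C_n$.
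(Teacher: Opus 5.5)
Your proposal is correct and follows the paper's proof essentially step for step. Lemma \ref{lem:convAnC} applied to $C^\pm_n$ gives the limits of $\tfrac1n A^\pm_n$, Lemma \ref{lem:convLnA} gives the limits of $\Lambda_n(h,A^\pm_n)$ and $\Lambda_n(h,A^\pm_n+x)$, and the inclusions \eqref{eq:inclusion} of Lemma \ref{lem:existsol} squeeze $\tilde A_n$, $\tilde\Lambda_n$ and $\tilde\Lambda^x_n$. Your rational bracketing of arbitrary $x$, using monotonicity of $\Lambda_n(h,\cdot)$, is a harmless refinement that the paper does not make: it works with each fixed $x$ on its own full-measure event, whereas your version gives a single null set that is uniform in $x$.
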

\begin{proof}
 By the convergence \eqref{eq:CcondREM0}
\been{
\lim_{n\to \infty}\res C^+_n(h)=\lim_{n\to \infty}\res C^-_n(h)=\lim_{n\to \infty}\res C_n(h)=c,\quad \gas .
}
Therefore, Lemma \ref{lem:convAnC} gives
\been{
\label{eq:convAnCCC}
\lim_{n\to \infty}\res A^{-}_n(h)=\lim_{n\to \infty}\res A^{+}_n(h)=\hat{a}(c),\quad \gas .
}
Consequently, for any $x\in \R$,
\been{
\label{eq:convAnCCCx}
\lim_{n\to \infty}\res \( A^{-}_n(h)+x\)=\lim_{n\to \infty}\res \( A^{+}_n(h)+x\)=\hat{a}(c),\quad \gas .
}
The above limits and Lemma \ref{lem:convLnA} yield
\been{
\label{eq:convLnCCC}
\lim_{n\to \infty} \Lambda_n(h,A^{-}_n(h))=\lim_{n\to \infty} \Lambda_n(h,A^{+}_n(h))=\lim_{n\to \infty} \Lambda_n(h,A^{-}_n(h)+x)=\lim_{n\to \infty} \Lambda_n(h,A^{+}_n(h)+x)=\hat{\lambda}(\hat{a}(c)),\quad \gas.
}
 Using the bounds \eqref{eq:inclusion} in Lemma \ref{lem:existsol}, the limit \eqref{eq:convAnCCC}, \eqref{eq:convAnCCCx}, and \eqref{eq:convLnCCC} completes the proof.
\end{proof}
\begin{lem}
Given any $N>N_{\e}$, for $\Gae$ $h\in \bar{\mathfrak{L}}^{\e}_N$ and any fixed $x\in \bar{\mathfrak{R}}_{N,\lambda^*}(h)$, 
\been{
\lim_{n\to \infty}\frac{M_n''(h,\tilde\Lambda_n(h))}{M_n''(h,\tilde\Lambda_n^x(h))}=1
}
\end{lem}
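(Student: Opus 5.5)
We will show that both $\tfrac1n M''_n(h,\tilde\Lambda_n(h))$ and $\tfrac1n M''_n(h,\tilde\Lambda^x_n(h))$ converge $\Ph$-a.s.\ to the same strictly positive limit $G''(\hat\lambda(\hat a(c)))$; the ratio then tends to $1$. Set $\lambda_0:=\hat\lambda(\hat a(c))>0$. By Claim (2) of Lemma \ref{lem:exists2}, $G''(\lambda_0)>0$, so the denominator limit is nonzero.

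For $\lambda\in\{\tilde\Lambda_n(h),\tilde\Lambda^x_n(h)\}$ we split
\begin{equation*}
\Bigl|\tfrac1n M''_n(h,\lambda)-G''(\lambda_0)\Bigr|\leq \tfrac1n\bigl|M''_n(h,\lambda)-M''_n(h,\lambda_0)\bigr|+\Bigl|\tfrac1n M''_n(h,\lambda_0)-G''(\lambda_0)\Bigr|.
\end{equation*}
The second term vanishes $\Ph$-a.s.\ by item (5) of Lemma \ref{lem:limits}; the exceptional null set depends on $\lambda_0$, but $\lambda_0$ is deterministic. For the first term we use the Lipschitz bound \eqref{eq:g2LB_lip} of Lemma \ref{lem:bounds2}, which holds for every $h$ since its proof uses only \eqref{eq:Lip}:
\begin{equation*}
\tfrac1n\bigl|M''_n(h,\lambda)-M''_n(h,\lambda_0)\bigr|\leq 2\Bigl(\tfrac1n\sum_{i=1}^n|h_i|^3\Bigr)|\lambda-\lambda_0|.
\end{equation*}
By Assumption \ref{assum:1}, $\E|h_1|^3=\psi_4<\infty$, so the SLLN gives $\tfrac1n\sum|h_i|^3\to\psi_4$ a.s. By \eqref{eq:limitsfinal}, $|\lambda-\lambda_0|\to 0$ for a.e.\ $h\in\bar{\mathfrak L}^\e_N$ and fixed $x\in\bar{\mathfrak R}_{N,\lambda^*}(h)$. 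Hence the first term also vanishes a.s.

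It follows that numerator and denominator, each divided by $n$, converge to $G''(\lambda_0)>0$, and the ratio tends to $1$. The only delicate point is bookkeeping of null sets: we intersect the full-measure sets from \eqref{eq:limitsfinal}, the SLLN for $|h_i|^3$, and Lemma \ref{lem:limits}(5) at $\lambda_0$, which is a countable intersection and so still has full measure. If one prefers to avoid using the third moment, Lemma \ref{lem:bounds} gives $|g'''|\le 2$ only, which yields the same $|h_i|^3$ factor, so the third moment is genuinely what is needed here; it is available by assumption.
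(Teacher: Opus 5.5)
Your proposal is correct and follows essentially the same route as the paper. Both arguments use the Lipschitz bound \eqref{eq:g2LB_lip} together with the SLLN for $\tfrac1n\sum_{i}|h_i|^3$ to replace $\tilde\Lambda_n(h)$ and $\tilde\Lambda^x_n(h)$ by $\hat\lambda(\hat a(c))$ via \eqref{eq:limitsfinal}, then apply Lemma \ref{lem:limits}(5) at the deterministic point and conclude using $G''(\hat\lambda(\hat a(c)))>0$. (Your closing aside overstates the role of the third moment: bounding $|g''(\lambda h_i)-g''(\lambda_0 h_i)|\le\min\{1,2|h_i||\lambda-\lambda_0|\}$ and using dominated convergence needs only second moments, but this does not affect your proof.)
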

\begin{proof}
Let $(\Lambda_n(h))_{n\in\N}$ be an $h$-measurable sequence such that
\been{
\Lambda_n(h)\longrightarrow \hat\lambda(\hat a(c)), \qquad \gas.
}
By the Lipschitz bound \eqref{eq:g2LB_lip},
\been{
\label{eq:M2moving_aux}
\begin{aligned}
\left|
\frac{1}{n}M_n''(h,\Lambda_n(h))
-
\frac{1}{n}M_n''(h,\hat\lambda(\hat a(c)))
\right|
&\leq
2\left(\frac{1}{n}\sum_{i=1}^n |h_i|^3\right)
\left|\Lambda_n(h)-\hat\lambda(\hat a(c))\right|.
\end{aligned}
}
Since $\E[|h_1|^3]<\infty$ by Assumption~\ref{assum:1}, the strong law of large numbers yields
\been{
\label{eq:SLLNthird}
\lim_{n\to \infty} \res\sum_{i=1}^n |h_i|^3=\E[|h_1|^3], \qquad \gas.
}
Hence, by the above convergence and \eqref{eq:M2moving_aux},
\been{
\label{eq:M2moving_aux2}
\lim_{n\to \infty} \res\left|M_n''(h,\Lambda_n(h))
-
M_n''(h,\hat\lambda(\hat a(c)))
\right|
= 0, \qquad \gas.
}
Combining the above result with Claim $(5)$ of Lemma~\ref{lem:limits}, we obtain
\been{
\label{eq:M2moving}
\aled{
&\lim_{n\to \infty} \left|\res M_n''(h,\Lambda_n(h))
-
G''(\hat\lambda(\hat a(c)))
\right|\\
&\leq \lim_{n\to \infty} \(\res\left|M_n''(h,\Lambda_n(h))
-
M_n''(h,\hat\lambda(\hat a(c)))
\right|+ \left|\res M_n''(h,\hat\lambda(\hat a(c)))
-
G''(\hat\lambda(\hat a(c)))
\right|\)
= 0, \quad \gas.
}
}
If $h\in \bar{\mathfrak{L}}^{\e}_N$ and $x\in \bar{\mathfrak{R}}_{N,\lambda^*}(h)$, for any $N\in \N$ large enough, we can apply \eqref{eq:M2moving} by taking both $\Lambda_n(h)=\tilde\Lambda_n(h)$ and $\Lambda_n(h)=\tilde\Lambda_n^x(h)$. Using \eqref{eq:limitsfinal}, for $\Gae$ $h\in \bar{\mathfrak{L}}^{\e}_N$ and fixing $x\in \bar{\mathfrak{R}}_{N,\lambda^*}(h)$, 
\been{
\frac{1}{n}M_n''(h,\tilde\Lambda_n(h))
\xrightarrow{n\to\infty}
G''(\hat\lambda(\hat a(c))),
\qquad
\frac{1}{n}M_n''(h,\tilde\Lambda_n^x(h))
\xrightarrow{n\to\infty}
G''(\hat\lambda(\hat a(c))).
}
Since $G''(\hat\lambda(\hat a(c)))>0$, we conclude that, for $N\in \N$ large enough, for $\Gae$ $h\in \bar{\mathfrak{L}}^{\e}_N$ and any fixed $x\in \bar{\mathfrak{R}}_{N,\lambda^*}(h)$,
\been{
\label{eq:MM}
\lim_{n\to\infty}\frac{M_n''(h,\tilde\Lambda_n(h))}{M_n''(h,\tilde\Lambda_n^x(h))}
=
\lim_{n\to\infty}\frac{\frac{1}{n}M_n''(h,\tilde\Lambda_n(h))}{\frac{1}{n}M_n''(h,\tilde\Lambda_n^x(h))}
= 1.
}
\end{proof}
The above lemmas hold under the restriction $h\in \bar{\mathfrak{L}}^{\e}_N$ and $x\in \bar{\mathfrak{R}}_{N,\lambda^*}(h)$. We want to extend the result to almost all $h$ and all $x\in \R$. The next lemma shows that taking the limit $N\to \infty$ yields the desired extension.
\begin{lem}
\been{
\label{eq:r2}
 \Ph(\liminf_{N\to \infty}\bar{\mathfrak{L}}^{\e}_{N})=1
}
and
 \been{
 \label{eq:r10}
 \aled{
 \liminf_{\lambda^*\to \infty}\liminf_{N\to \infty}\bar{\mathfrak{R}}_{N,\lambda^*}(h)&=\R,\quad \gas.
 }
 }
\end{lem}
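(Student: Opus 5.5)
The plan is to prove the two claims separately. The first is a Borel--Cantelli / SLLN statement about the three conditions defining $\mathfrak{L}^{\e}_n$. The second reduces, via the definition \eqref{eq:def_R} of $\mathfrak{R}_{n,\lambda^*}(h)$, to the almost sure limits of $\res A^{\pm}_n(h)$ in \eqref{eq:convAnCCC0}, combined with $\varsigma^*\to\varsigma$ as $\lambda^*\to\infty$.

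For \eqref{eq:r2} it suffices to show that, $\Gas$, $h\in\mathfrak{L}^{\e}_n$ for all $n$ large enough, since then $h\in\bar{\mathfrak{L}}^{\e}_N$ for some $N$. The three conditions are handled as follows.
\begin{enumerate}
\item By the SLLN, $\res\sum_{i\le n}h_i^2\to\psi_2<\infty$. Hence $2\pi\sum_{i\le n}h_i^2\le n^{3/2}$ eventually.
\item By Lemma \ref{lem:limits}(1), $\res\Sigma_n(h)\to\varsigma$. Here $\varsigma>0$, because $\Sigma_n$ is a sum of terms $|h_i|-mh_i\ge(1-|m|)|h_i|$ and $h_1\neq0$ with positive probability by Assumption \ref{assum:1}. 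Thus $n^{4/5}<\Sigma_n(h)<n^{3/2}$ eventually.
\item For the minimum condition, set $t_n:=\sqrt{16/(2\pi n^5\e^8)}\asymp n^{-5/2}$. By the small-ball part of Assumption \ref{assum:1},
\[
\Ph\Big(\min_{i\le n}|h_i|<t_n\Big)\le n p_1 t_n = O(n^{-3/2}),
\]
which is summable. Borel--Cantelli then gives $\min_{i\le n}h_i^2\ge 16/(2\pi n^5\e^8)$ eventually, $\Gas$.
\end{enumerate}
Intersecting these three almost sure events proves \eqref{eq:r2}.

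For \eqref{eq:r10}, fix $x\in\R$ and work on the full-measure event where \eqref{eq:convAnCCC0} and $\res\Sigma_n(h)\to\varsigma$ hold. Since $\hat a(c)>0$, we have $A^-_n(h)\ge n\hat a(c)/2>-x$ eventually, which gives the lower constraint. The upper constraint $x<(n\varsigma^*\wedge\Sigma_n(h))-A^+_n(h)$ holds eventually as soon as
\[
\hat a(c)<\varsigma^*\wedge\varsigma .
\]
The right inequality $\hat a(c)<\varsigma$ is given by Lemma \ref{lem:exists2}(8). For the left one I use $\varsigma^*=\E[h_1g'(\lambda^*h_1)]=G'(\lambda^*)$, which increases to $\varsigma$ as $\lambda^*\to\infty$ by Lemma \ref{lem:exists2}(3). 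So for every $\lambda^*$ larger than some $\lambda_0$ we get $\varsigma^*>\hat a(c)$, and then $x\in\mathfrak{R}_{n,\lambda^*}(h)$ for all $n\ge N(x,h)$, i.e. $x\in\bar{\mathfrak{R}}_{N,\lambda^*}(h)$ for large $N$.

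A remaining subtlety is that the intersection over $n\ge N$ in $\bar{\mathfrak{R}}_{N,\lambda^*}$ should not depend on $x$ through a null set. I handle this by letting the exceptional null set come only from \eqref{eq:convAnCCC0} at $x=0$. The dependence on $x$ is just a deterministic shift by $x$ in the bounds, so one full-measure event works simultaneously for all $x\in\R$, and taking $\liminf$ in $N$ and then in $\lambda^*$ gives all of $\R$.

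The main obstacle I expect is the minimum condition in $\mathfrak{L}^{\e}_n$. It requires the quantitative small-ball bound $\Ph(|h_1|<t)\le p_1t$ at scale $n^{-5/2}$, and the estimate must be summable so that Borel--Cantelli applies. It is, since $n\cdot n^{-5/2}=n^{-3/2}$. The other steps only use the SLLN and the limits already established.
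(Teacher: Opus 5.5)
Your proposal is correct and follows essentially the same route as the paper. You use the SLLN for the second-moment and $\Sigma_n$ conditions, and the small-ball union bound of order $n^{-3/2}$ with Borel--Cantelli for the minimum condition. For \eqref{eq:r10} you then combine $\tfrac{1}{n}A^{\pm}_n(h)\to\hat a(c)<\varsigma$ with $\varsigma^*=G'(\lambda^*)\uparrow\varsigma$. Your explicit checks that $\varsigma\geq(1-|m|)\psi_3>0$ and that the exceptional null set does not depend on $x$ make precise two points the paper leaves implicit.
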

\begin{proof}
We start by proving \eqref{eq:r2}. Let us define
 \been{
 \bar{\mathfrak{L}}^{\e}_{N,1}:=\bigcap^{\infty}_{n=N}\left\{h\in \R^{\N}:\quad 2\pi\sum^n_{i=1}h^2_i\leq n^{3/2},\quad \Sigma_n(h)\in (n^{4/5}, n^{3/2}) \right\}
 }
and
 \been{
{\mathfrak{L}}^{\e}_{n,2}:=\left\{h\in \R^{\N}:\quad 2\pi\min_{1\leq i\leq n} h^2_i\geq \frac{16}{n^5\e^8} \right\},\qquad \bar{\mathfrak{L}}^{\e}_{N,2}:=\bigcap^{\infty}_{n=N}{\mathfrak{L}}^{\e}_{n,2}.
 }
 Thus $\bar{\mathfrak{L}}^{\e}_N=\bar{\mathfrak{L}}^{\e}_{N,1}\cap \bar{\mathfrak{L}}^{\e}_{N,2}$. We have
 \been{
 \label{eq:part00}
 \Ph(\liminf_{N\to \infty}\bar{\mathfrak{L}}^{\e}_{N,1})\geq \Ph\(\left\{h\in \R^{\N}:\, \lim_{n\to\infty}\sum^n_{i=1}\res h^2_i\in (0,\infty), \lim_{n\to \infty}\res\Sigma_n(h)\in (0,\infty) \right\}\)=1.
 }

 Since
\been{
({\mathfrak L}_{n,2}^{\e})^c
=
\bigcup_{i=1}^n
\left\{
h_i^2<
\frac{16}{2\pi n^5 \e^8}
\right\},
}
by the union bound, independence, and the Assumption \ref{assum:1}
\been{
\Ph(({\mathfrak L}_{n,2}^{\e})^c)
\le
n\,\Ph\!\left(
|h_1|<
\frac4{\sqrt{2\pi}\,n^{5/2}\e^4}
\right)
\le C_\e n^{-3/2}.
}
Therefore
\been{
\sum_{n=1}^\infty \Ph(({\mathfrak L}_{n,2}^{\e})^c)<\infty.
}
Hence, by Borel--Cantelli,
\been{
\label{eq:part0}
\Ph\!\left(\limsup_{n\to\infty}({\mathfrak L}_{n,2}^{\e})^c\right)=0\Longrightarrow
\Ph\!\left(\liminf_{N\to\infty}\bar{\mathfrak L}_{N,2}^{\e}\right)=1.
}
So, \eqref{eq:part00} and \eqref{eq:part0} give \eqref{eq:r2}.

Now, we prove \eqref{eq:r10}. Note that $\varsigma^*=G'(\lambda^*)$. Hence, by Claim $(2)$ and $(3)$ of Lemma \ref{lem:exists2} and Claim $(1)$ of Lemma \ref{lem:limits}
\been{
\label{eq:limsigma}
\lim_{\lambda^*\to \infty}\varsigma^*=\sup_{\lambda^*}\varsigma^*=\varsigma=\lim_{\lambda^*\to \infty}\lim_{n\to \infty}\varsigma^*\wedge \res\Sigma_n(h),\quad \gas.
}
The limits \eqref{eq:convAnCCC0} give
\been{
\liminf_{n\to \infty}A^{-}_{n}(h)=\infty,\quad \gas,
}
and, since $\lim_{n\to \infty }\res A^{+}_{n}(h)=\hat a(c)<\varsigma=\lim_{\lambda^*\to \infty}\lim_{n\to \infty}\varsigma^*\wedge \res\Sigma_n(h)$, 
\been{
\lim_{\lambda^*\to \infty}\liminf_{n\to \infty} \((n\varsigma^*\wedge \Sigma_n(h))-A^{+}_{n}(h)\)=\lim_{\lambda^*\to \infty}\liminf_{n\to \infty} \(n\(\varsigma^*\wedge \res\Sigma_n(h)-\res A^{+}_{n}(h)\)\)=\infty,\quad \gas.
}
Therefore, taking the limit $N\to \infty$ and $\lambda^*\to \infty$ in the definition \eqref{eq:def_R} of $\bar{\mathfrak{R}}_{N,\lambda^*}(h)$, we get
 \been{
 \label{eq:r1}
 \aled{
 \liminf_{\lambda^*\to \infty}\liminf_{N\to \infty}\bar{\mathfrak{R}}_{N,\lambda^*}(h)&=\lim_{\lambda^*\to \infty}\(\,-\liminf_{n\to \infty} A^{-}_{n}(h),\liminf_{n\to \infty} \((n\varsigma^*\wedge \Sigma_n(h))-A^{+}_{n}(h)\)\,\)=\R.
 }
 }
\end{proof}
We can finally prove the Theorem \ref{them:0}
\begin{proof}[Proof of Theorem \ref{them:0}]
Given a Borel set $\mathfrak{U}$, let
\been{
\mathbf{K}_n(h,\mathfrak{U}):= e^{C_n(h)}\Ps(\{\sigma:\,H_n(h,\sigma)-\tilde A_n(h)\in \mathfrak{U}\})
}
Combining Proposition \ref{prop:finiteREM} with the limits \eqref{eq:limitsfinal} and \eqref{eq:MM}, for any $N\in \N\cap(N_{\e},\infty)$, we get that for $\Gae\,h\in \bar{\mathfrak{L}}^{\e}_N$ and any fixed $x\in \bar{\mathfrak{R}}_{N,\lambda^*}(h) $
\been{
\lim_{n\to \infty} \mathbf{K}_n(h,[x,\infty))=\D_{\hat{\lambda}(\hat{a}(c))}([x,\infty)).
}
Hence, by the countable intersection of events with probability $1$, the above limit holds simultaneously for any $N\in\N\cap(N_{\e},\infty)$ and $\lambda^*\in\N\cap(1,\infty)$
\been{
\label{eq:almost_limit2}
\lim_{n\to \infty} \mathbf{K}_n(h,[x,\infty))=\D_{\hat{\lambda}(\hat{a}(c))}([x,\infty))\quad \forall x\in \bar{\mathfrak{R}}_{N,\lambda^*}(h)\cap \mathbb{Q},\quad \Gae\,h\in \bar{\mathfrak{L}}^{\e}_N.
}
So, taking $N\to \infty$ and $\lambda^*\to \infty$ , the limits \eqref{eq:r1} and \eqref{eq:r2} give
 \been{
 \label{eq:set1}
 \lim_{n\to \infty} \mathbf{K}_n(h,[x,\infty))=\D_{\hat{\lambda}(\hat{a}(c))}([x,\infty)),\quad \forall x\in \mathbb{Q},\quad \gas.
 }
Hence, for every $q<r$, $q,r\in\mathbb{Q}$,
\been{
\mathbf{K}_n(h,[q,r))
=
\mathbf{K}_n(h,[q,\infty))-\mathbf{K}_n(h,[r,\infty))
\xrightarrow{n\to \infty}
\D_{\hat{\lambda}(\hat{a}(c))}([q,r)),
\qquad \gas.
}
Fix a bounded interval $\mathfrak{K}:=[-k,k)$, for some $k\in \mathbb{Q}\cap(0,\infty)$. Then
\been{
\mathcal{I}_{\mathfrak{K}}:=\{[q,r)\cap \mathfrak{K}:\ q<r,\ q,r\in\mathbb{Q}\}\cup\{\mathfrak{K}\}
}
is a covering semiring of $\mathfrak{K}$. Hence, by the convergence-determining
class theorem, $\mathcal{I}_{\mathfrak{K}}$ is convergence-determining for weak convergence on $\mathfrak{K}$ \cite[Appendix A2.3, Proposition A2.3.IV]{DaleyVereJones2003}. Therefore $\mathbf{K}_n(h,\cdot)\big|_{\mathfrak{K}}$ converges weakly to $ \D_{\hat{\lambda}(\hat{a}(c))}\big|_{\mathfrak{K}}$.

Since every continuous and compactly supported function $f$ has support contained in some bounded interval $\mathfrak{K}$,
it follows that $\mathbf{K}_n(h,\cdot)$ converges vaguely to $\D_{\hat{\lambda}(\hat{a}(c))}$.

Comparing the definitions \eqref{eq:solac0}, we get
\been{
\hat a(c)=\tilde a,\qquad \hat{\lambda}(\hat{a}(c))=\tilde \lambda,
}
completing the proof of Theorem \ref{them:0}.
\end{proof}
\bibliographystyle{alpha}
\bibliography{REMbib}

\end{document}